\documentclass[11pt]{amsart}
\usepackage[a4paper, margin=3cm]{geometry}
\usepackage{microtype}
\usepackage[english]{babel}
\usepackage{amsmath, amssymb, amsthm, mathtools}
\usepackage{dsfont}
\usepackage{enumitem}
\usepackage[utf8]{inputenc}
\usepackage[T1]{fontenc}
\usepackage{lmodern}

\usepackage[colorlinks=true,linkcolor=blue,citecolor=blue,urlcolor=blue]{hyperref}
\usepackage[alphabetic]{amsrefs}

\theoremstyle{plain}
\newtheorem{theorem}{Theorem}[section]
\newtheorem{lemma}[theorem]{Lemma}
\newtheorem{corollary}[theorem]{Corollary}
\newtheorem{proposition}[theorem]{Proposition}

\theoremstyle{definition}
\newtheorem{definition}[theorem]{Definition}
\theoremstyle{remark}
\newtheorem{remark}[theorem]{Remark}

\newcommand{\densa}{\rho^a}
\newcommand{\dens}{\rho}
\newcommand{\densaeps}{\rho^{a,\varepsilon}}
\newcommand{\denseps}{\rho^{\varepsilon}}

\newcommand{\F}{F}
\newcommand{\ltwo}{L}
\newcommand{\unitint}{(0,1)}
\newcommand{\co}{C_0}

\newcommand{\bb}{\beta}
\newcommand{\rbb}[1][]{|\bb_{#1}|}
\newcommand{\bm}{B}
\newcommand{\mubb}{\mu^{\bb}}
\newcommand{\murbb}{\mu^{\rbb{}}}
\newcommand{\ind}{\mathds{1}}
\newcommand{\indzi}{\ind_{[0,\infty)}}
\newcommand{\inds}{\indzi^{\varepsilon}}

\newcommand{\gmoll}{\gamma_{\varepsilon}}

\newcommand{\DDx}{\delta_0(\bb_x)}
\newcommand{\DDa}{\delta_0(\bb_a)}
\newcommand{\R}{\mathbb{R}}
\newcommand{\C}{\mathbb{C}}
\newcommand{\N}{\mathbb{N}}

\newcommand{\fcbinf}{\mathcal{F}C_b^{\infty}}
\newcommand{\cbone}{C_b^1(\ltwo)}
\newcommand{\HidaFcts}{(\mathcal{S})}
\newcommand{\HidaDistribs}{\HidaFcts'}
\newcommand{\SchwartzFcts}{\mathcal{S}(\R)}
\newcommand{\SchwartzDistribs}{\mathcal{S}'(\R)}
\newcommand{\hc}{\mathrm{HC}}
\newcommand{\ca}{\mathrm{ca}}

\newcommand{\pin}{\mathrm{pin}}
\newcommand{\borel}{\mathcal{B}}
\newcommand{\E}{\mathbb{E}}
\newcommand{\Prob}{\mathbb{P}}
\newcommand{\Emubb}{\E}
\newcommand{\ho}{H_0^1}
\newcommand{\hno}{H_0^{-1}}
\newcommand{\domL}{\ho~\cap~H^2}
\newcommand{\coec}{[\co]}
\newcommand{\Czo}{C(\mkern3.5mu\overline{\mkern-3.5mu (0,1) \mkern-3.5mu}\mkern3.5mu)} 
\newcommand{\Cinfzo}{C^{\infty}(\mkern3.5mu\overline{\mkern-3.5mu (0,1) \mkern-3.5mu}\mkern3.5mu)}
\newcommand{\setza}{\mathcal{G}}
\newcommand{\boundtight}{C_T}
\newcommand{\wick}[1]{\colon\!\! #1 \colon}

\newcommand{\ddext}{\delta_x^{\mathrm{ext}}}
\newcommand{\ddexta}{\delta_a^{\mathrm{ext}}}

\newcommand{\densxbv}{\chi_{\varepsilon}^x}
\newcommand{\vvmeas}{\nu}
\newcommand{\vvmeaseps}{\vvmeas^{\varepsilon}}
\newcommand{\vvmeasx}{\sigma^{(a)}}
\newcommand{\vvmeasepsx}{\sigma^{(a,\varepsilon)}}
\newcommand{\ufd}[1]{n_{#1}}

\newcommand{\var}[1]{\| #1 \|} 
\newcommand{\svar}[1]{\| #1 \|_{\text{SV}}} 
\newcommand{\vmeas}{\var{\vvmeas}}

\newcommand{\vmeasx}{\var{\vvmeasx}}
\newcommand{\ufieldx}{n_{\vvmeasx}}
\newcommand{\lmeasx}{\mu^{(a)}}
\newcommand{\approxmeasx}{\mu^{(a,\varepsilon)}}
\newcommand{\pcaf}[1]{L_{#1}^{\var{\vvmeas}}}
\newcommand{\pcafx}[1]{L_{#1}^{\var{\vvmeasx}}}

\newcommand{\llangle}{\langle \!\langle}
\newcommand{\rrangle}{\rangle \!\rangle}
\newcommand{\bllangle}{\Big\langle \mspace{-7mu} \Big\langle}
\newcommand{\brrangle}{\Big\rangle \mspace{-7mu} \Big\rangle}
\newcommand{\incl}{\mathrel{\reflectbox{$\in$}}}

\newcommand{\supp}{\mathrm{supp}}

\title[On Skorokhod Problems for Reflected and Singular SHE\MakeLowercase{s}]{On Skorokhod Problems for Reflected and Singular Stochastic Heat Equations}
\author[M.\ Grothaus]{Martin Grothaus}
\address{M.\ Grothaus, Department of Mathematics, RPTU University Kaiserslautern--Landau, 67653 Kaiserslautern, Germany}
\email{grothaus@rptu.de}
\urladdr{math.rptu.de/en/wgs/fuana}

\author[N.\ Renner]{Nicolas Renner}
\address{N.\ Renner, Department of Mathematics, RPTU University Kaiserslautern--Landau, 67653 Kaiserslautern, Germany}
\email{rennern@rptu.de}
\urladdr{math.rptu.de/en/wgs/fuana}

\subjclass{60H17, 31C25, 60H07, 60H40, 46G10, 26A45}
\keywords{Stochastic heat equation, reflection or singular drift, integration by parts formulas in infinite dimensions, Dirichlet forms, Malliavin calculus, white noise analysis, vector measures of bounded variation}

\begin{document}

\begin{abstract}
	We prove a Skorokhod decomposition for the Markov processes $X^a$ and $X$ associated to the gradient Dirichlet forms w.r.t.\ the measures $\densa\mubb$ and $\dens\mubb$, respectively. Here, $\mubb$ is the law of the standard Brownian bridge $\bb$, while $\densa$ and $\dens$ denote densities which are given by $\densa(z) \coloneqq \indzi(\bar{z}_a)$ and $\dens(z) \coloneqq \int_0^1 \indzi(\bar{z}_x) \, dx$, respectively, for all $z\in L^2(0,1)$ which have a (unique) continuous representative $\bar{z}$ which vanishes at zero and one. To this end, we derive infinite-dimensional integration by parts formulas (IbPFs) w.r.t.\ $\densa\mubb$ and $\dens\mubb$, which contain Hida distributions alongside the usual drift terms. We represent these Hida distributions by integration w.r.t.\ vector measures of bounded variation. The vector measures in question are constructed via an approximation argument, making use of a generalization of Prokhorov's theorem for vector measures. We further prove that, almost surely, the sample paths of $X^a$ and $X$ take values in the equivalence class of continuous functions vanishing at zero and one for all and $dt$-almost all times, respectively. The main motivation for studying $\densa\mubb$ and $\dens\mubb$ lies in the fact that the distributional terms in their IbPFs are simplifications of the distributional term in the IbPF w.r.t.\ the law of the reflected Brownian bridge on the unit interval $\murbb$, see \cite{GV18}. Representing the latter by integration w.r.t.\ a vector measure of bounded variation is still an open problem.
\end{abstract}

\maketitle

\section{Introduction}
Stochastic partial differential equations (SPDEs) driven by space-time white noise with singular drift terms form a major and developing branch in modern stochastic analysis.
SPDEs of this type naturally arise in quantum field theory (e.g.\ ultraviolet divergences, renormalization), statistical mechanics (e.g. the KPZ equation) and pose challenging mathematical problems due to the divergences caused by the noise term.
For analyzing infinite-dimensional processes, the Fuku\-shima stochastic calculus, relying on Dirichlet form theory \cite{FOT11, MR92}, can be viewed as a partial analog of the powerful and well-established It\^{o} calculus for finite-dimensional processes. Several classes of SPDEs can be treated with this calculus, for example sufficiently regular gradient-type SPDEs, reflected SPDEs, stochastic quantization equations and infinite-dimensional Ornstein--Uhlenbeck processes.
Furthermore, recently developed theories such as the theory of regularity structures \cite{H14}, rough paths \cite{FH14} and paracontrolled distributions \cite{GIP15} have greatly extended the understanding of such singular SPDEs, providing a well-defined notion of solution for SPDEs with locally subcritical nonlinearities.

Infinite-dimensional random obstacle problems give rise to a further, major class of singular SPDEs. Here, the ill-defined drift terms result from reflection at the boundary of the obstacle.
A classical problem in this regard is the dynamic of a one-dimensional random string with Dirichlet boundary conditions on the unit interval, conditioned to stay non-negative.
Solving the Nualart--Pardoux equation \cite{NP92} marks one of the first advances regarding such problems.
More recent work is provided by \cite{RZZ12}, treating the stochastic reflection problem on convex subsets of a Hilbert space with a sufficiently regular boundary by means of Dirichlet form theory.
Within in the Dirichlet form approach, integration by parts formulas (IbPFs) on the considered path spaces play a vital role, as they can be viewed as the infinitesimal versions of the SPDEs solved by the processes associated to the considered Dirichlet forms via Revuz correspondence.
Due to the singular nature of reflections, these IbPFs may contain sophisticated distributional terms which can be studied within the framework of white noise analysis \cite{HKPS93, Oba94, Kuo96}.

Among the stochastic processes with non-negative paths on the unit interval that vanish at both endpoints, a prototypical example is the modulus of the standard Brownian bridge, also called the reflected Brownian bridge.
The rigorous formulation of an SPDE with a well-defined solution whose invariant measure is the law of the reflected Brownian bridge $\murbb$ is still a widely open problem. Resolving it would lead to valuable progress in the understanding of intricate reflection dynamics.
Heuristically, the solution satisfies the KPZ equation (with an added local time) on the boundary. This equation has recently received considerable attention due to breakthroughs via the theory of regularity structures, see \cite{H13}.
In \cite{GV18}, the authors found the IbPF w.r.t.\ $\murbb$, which reads
\begin{equation}\label{GV_IbPF}
	\E_{\murbb}\left[ \partial_h \F \right] + \E_{\murbb}\left[ \F \, (h'',\cdot)_{\ltwo} \right] = \bllangle \F(\rbb), 2\int_{0}^1 h_x \Big(:\Big(\frac{d\bb_x}{dx}\Big)^2: -1 \Big) \, \DDx \, dx \brrangle
\end{equation}
where $h\in C^2(0,1)$ with compact support and $\F \in \fcbinf(L^2(0,1))$, i.e.\ $\F$ is a map from $L^2(0,1)$ to $\R$ of the form
\begin{equation}
	\F(z) = g((\eta_1, z)_{L^2(0,1)}, ..., (\eta_k, z)_{L^2(0,1)}) \quad \text{for all $z\in L^2(0,1)$}
\end{equation}
for some $g\in C_b^{\infty}(\R^k)$, $\eta_i\in L^2(0,1)$ for $i\in \{1,...,k\}$ and $k\in\N$. Further, $\bb$ is the standard Brownian bridge on the unit interval, $:\cdot:$ denotes the Wick renormalization, and $\DDx$ is Donsker's delta w.r.t.\ the Brownian bridge at $x$. Lastly, $\llangle \cdot, \cdot \rrangle$ is the dual pairing in the Hida triple.
This result is inspired by the IbPF on the law of the reflected Brownian motion found in \cite{Zam05}.

Generalizing the case of the reflected Brownian bridge, IbPFs on the law of the $\delta$-dimensional Bessel bridges for $\delta=3$, $\delta>3$ and $0 < \delta<3$ are provided in \cite{Zam02}, \cite{Zam03} and \cite{AZ20}, respectively.
Here, the most challenging and least understood cases are the ones with $\delta < 3$, among which the one-dimensional Bessel bridge, equal to reflected Brownian bridge, is one of the most prominent.

To deepen the understanding of \eqref{GV_IbPF} and the dynamic encoded in it, the question arises whether the Hida distribution on the right hand side (RHS) of the IbPF is regular enough to admit a representation via integration w.r.t.\ a vector measure of bounded variation.
More precisely, one wants to find a $H'$-valued vector measure $V$ on the Borel $\sigma$-algebra $\borel(L^2(0,1))$, where $H'$ is the topological dual space of a suitable, densely embedded Hilbert space $H\subset L^2(0,1)$, s.th.\
\begin{equation*}
	\bllangle \F(\rbb), 2\int_{0}^1 h_x \Big(:\Big(\frac{d\bb_x}{dx}\Big)^2: -1 \Big) \, \DDx \, dx \brrangle
	=
	\Big\langle h, \int_{\ltwo} \F \, dV \Big\rangle
\end{equation*}
for all $h\in H$ and $\F \in \mathcal{F}C_b^1(L^2(0,1))$ (defined analogously to $\fcbinf(L^2(0,1))$ above).

In \cite{AZ20}, the authors approximate the Hida distribution, which emerges in their IbPF for $\murbb$, with measures, and show that the one-potentials associated to these measures converge in the domain of the gradient Dirichlet form w.r.t.\ the law of the standard Brownian bridge.
While this is undoubtedly a notable achievement, it falls short of tackling the representation problem above, which remains a challenging open problem.

In order to progress the understanding around this representation problem, while analyzing interesting cases in their own right, it is reasonable to simplify the Hida distribution in \eqref{GV_IbPF} by omitting the squared renormalized white noise term and even the integral over the unit interval, yielding the RHSs
\begin{equation*}
	\bllangle \F(\bb), \int_{0}^1 h_x \, \DDx \, dx \brrangle
	\qquad \text{and} \qquad
	\bllangle \F(\bb), h_a \, \DDa \brrangle \quad \text{for } a \in (0,1).
\end{equation*}
This directly raises the question of finding measures which induce IbPFs incorporating these simplified Hida distributions, and of studying the properties of the gradient Dirichlet forms w.r.t.\ these measures, and further analyzing their associated processes.
This is the very aim of this paper. Therefore, we now introduce Dirichlet forms which lead to the aforementioned simplified IbPFs, and then solve the corresponding representation problems via vector measures. This finally yields the Skorokhod decompositions of the associated processes.

\medskip

The central Dirichlet forms studied in this paper are
\begin{equation*}
	\mathcal{E}^a(F,G) \coloneqq \int_{\ltwo} (DF(z),DG(z))_{\ltwo} \, \densa \, d\mubb
	\quad \text{and} \quad
	\mathcal{E}(F,G) \coloneqq \int_{\ltwo} (DF(z),DG(z))_{\ltwo} \, \dens \, d\mubb
\end{equation*}
where $\ltwo \coloneqq L^2(0,1)$, $\mubb$ is the law of the standard Brownian bridge $\bb$ on the Borel $\sigma$-algebra $\borel(\ltwo)$, $u$, $v\in W^{1,2}(\ltwo,\mubb)$, and the considered densities $\densa,\, \dens: \ltwo \to \R$ are defined by
\begin{equation*}
	\densa(z) \coloneqq \indzi(\bar{z}_a)\quad \text{and} \quad \dens(z) \coloneqq \int_0^1 \indzi(\bar{z}_x) \, dx \quad \text{for } z \in \coec.
\end{equation*}
Here, $\coec$ is the canonical embedding of $\co$, denoting the continuous functions on the unit interval vanishing at zero and one, into $\ltwo$, and $\bar{z}$ denotes the unique existing $\co$-representative of a function $z\in\ltwo$. Naturally, we extend $\densa$ and $\dens$ to $\ltwo$ by zero. Further, we define the diffusion processes (so called distorted Ornstein--Uhlenbeck processes) associated with $\mathcal{E}^a$ and $\mathcal{E}$ as $M^a = (\Omega^a, \mathcal{M}^a, \{\mathcal{M}_t^a\}, \theta_t^a, X_t^a, P_z^a)$ and $M = (\Omega, \mathcal{M}, \{\mathcal{M}_t\}, \theta_t, X_t, P_z)$, respectively.

As the first key result, we establish the IbPFs
\begin{align*}
	\E_{\densa\mubb}\left[ \partial_h \F \right] + \E_{\densa\mubb}\left[ \F \, (h'',\cdot)_{\ltwo} \right] &= - \llangle \F(\bb), \bar{h}_a \, \DDa \rrangle,\\
	\E_{\dens\mubb}\left[ \partial_h \F \right] + \E_{\dens\mubb}\left[ \F \, (h'',\cdot)_{\ltwo} \right] &= - \bllangle \F(\bb), \int_{(0,1)}\bar{h}_x \, \DDx \, dx \brrangle
\end{align*}
w.r.t.\ the measures $\densa\mubb$ and $\dens\mubb$, respectively, as stated in Theorem \ref{thm:ibpf}. Here, $\F$ is an element of the continuous and bounded functions on $\ltwo$ which have a continuous and bounded Fr\'{e}chet derivative, denoted by $\cbone$. The function $h$ is taken from the domain of the Dirichlet Laplacian on the unit interval, namely the intersection of the Sobolev spaces $\domL \coloneqq H_0^{1,2}(0,1) \cap H^{2,2}(0,1)$.
An instrumental tool for proving these IbPFs consists in a version of the Cameron--Martin formula which allows for $\domL$-displacements, see Lemma \ref{lem:CMF}, which we establish with tools from white noise analysis.
As anticipated above, these IbPFs are indeed simplified versions of the IbPF \eqref{GV_IbPF}.

Moreover, in order to represent the right hand sides (RHSs) of the IbPFs above by integration w.r.t.\ vector measures with bounded variation, and motivated by the $L_+^2(\SchwartzDistribs, \mu)$-approximations
\begin{equation*}
	\gmoll(\bb_a) \xrightarrow[\varepsilon \downarrow 0]{\HidaDistribs} \DDa
	\quad \text{and} \quad
	\int_{(0,1)} \bar{h}_x \gmoll(\bb_x(\omega)) \, dx \xrightarrow[\varepsilon \downarrow 0]{\HidaDistribs} \int_{(0,1)} \bar{h}_x \delta_0(\bb_x) \, dx,
\end{equation*}
we define the $\hno$-valued vector measures $\vvmeasepsx$ and $\vvmeaseps$ on $\borel(\ltwo)$ by
\begin{equation*}
	\vvmeasepsx(h,B) \coloneqq \int_{B} \bar{h}_a \gmoll(\bar{z}_a) \, d\mubb(z)
	\quad \text{and} \quad
	\vvmeaseps(h,B) \coloneqq \int_{B} \int_{(0,1)} \bar{h}_x\gmoll(\bar{z}_x) \, dx \, d\mubb(z)
\end{equation*}
for all $h\in\ho, \, B\in\borel(\ltwo)$. By virtue of a generalization of Prokhorov's theorem for vector measures, we show that the sequences $(\sigma^{(a,1/n)})_{n\in\N}$ and $(\nu^{1/n})_{n\in\N}$ are weak sequentially compact, and we denote arbitrary but fixed weak limits of existing weakly convergent subsequences by $\vvmeasx$ and $\vvmeas$, respectively. These vector measures indeed lead to the desired representation
\begin{equation*}
	\llangle \F(\bb), \bar{l}_a\DDa\rrangle = \Big\langle l, \int_{\ltwo} \F \,d\vvmeasx \Big\rangle
	\quad \text{and} \quad
	\bllangle \F(\bb), \int_{[0,1]} \bar{l}_x\DDx dx \brrangle = \Big\langle l, \int_{\ltwo} \F \,d\vvmeas\Big\rangle,
\end{equation*}
where $l\in \domL$, $\F\in\cbone$ and $\langle\cdot,\cdot\rangle$ denotes the dual pairing in the Gelfand triple $\ho \subset \ltwo \subset \hno$. We denote the polar decomposition as $\vvmeasx = \ufd{\vvmeasx} \var{\vvmeasx}$, where $\var{\vvmeasx}$ is the variation measure and $\ufd{\vvmeasx}$ the unit field associated to $\vvmeasx$.

Now, results from \cite{RZZ12} are applicable to obtain the Skorokhod decompositions of $X^a$ and $X$. Explicitly, for $\mathcal{E}^a$-quasi-every $z\in\ltwo$ there exists an $\mathcal{M}_t^a$-cylindrical Wiener process $W^{z,a}$ such that
\begin{equation*}
	\langle l, X_{t}^a-X_0^a\rangle = \int_0^{t} \langle l, dW_s^{z,a}\rangle + \frac{1}{2} \int_0^{t} \langle l, \ufieldx (X_s^a) \rangle d\pcafx{s} - \int_0^{t} \langle l'', X_s^a \rangle ds
\end{equation*}
for all $l\in\domL$ and $t\geq 0$ $P_z^a$-a.s. Here, $\pcafx{t}$ denotes the real-valued positive continuous additive functional (PCAF) associated with $\vmeasx$ by the Revuz correspondence. The analogous statement holds for $X$. We further prove, by using the strategy of the proof \cite{RZZ12}, Theorem 6.5, that $X^a$ and $X$ are almost surely continuous (more precisely in $\coec$) for quasi-every starting point $z\in\ltwo$, which can be improved to indeed every starting point $z\in\ltwo$ in the case of $X^a$. We also prove further properties of $\var{\vvmeasx}$, $\pcafx{}$ and $X^a$.

We want to note that the processes $X^a$ and $X$ can be heuristically seen as weak solutions to the highly singular SPDEs
\begin{equation*}
	dX_t^a = \frac{1}{2} \Delta X^a_t \, dt + dW_t + \delta_a \otimes dl_t^{0,X_{\cdot}^a(a)}
	\quad \text{and} \quad
	dX_t = \frac{1}{2} \Delta X_t \, dt + dW_t + dl_t^{0,X_{\cdot}},
\end{equation*}
respectively, each with Dirichlet boundary conditions on $\unitint$, where $(l_t^{0,X_{\cdot}^a(a)})_{t\geq 0}$ is a family of local times of $(X_{t}^a(a))_{t\geq 0}$ at zero, $(l_t^{0,X_{\cdot}})_{t\geq 0}$ is a family of local times of $(X_{t})_{t\geq 0}$ at zero.

\medskip

This paper is structured as follows:
\begin{itemize}
	\item  In Section 2, we briefly introduce the main concepts of white noise analysis, define a Brownian bridge in this framework, and describe the basic setting and notations in this paper, including a Gelfand triple and the notion of BV functions therein.
	\item In Section 3, we establish the quasi-regular local gradient Dirichlet forms $\mathcal{E}^a$ and $\mathcal{E}$.
	\item Section 4 proves the IbPFs w.r.t.\ $\densa\mubb$ and $\dens\mubb$. Therefore, we show a version of the Cameron--Martin theorem, and then proceed to prove smoothed versions of the IbPFs.
	\item In Section 5, we represent the RHSs of the IbPFs by integration w.r.t.\ vector measures, leading to the Skorokhod decomposition of $X^a$ and $X$.
	Among other properties, we show that, almost surely, the sample paths of $X^a$ and $X$ is $\coec$-valued for all and for $dt$-almost all times, respectively. Solving the initial representation problem is the central part of this section. It is solved by defining an approximating sequence of vector measures and then utilizing a generalization of Prokhorov's theorem in order to obtain weak limits.
\end{itemize}

\section{Preliminaries}
\subsection{White Noise Analysis}\label{sect:wna}
In this subsection, we provide a brief overview of the main concepts of white noise analysis.
For further reading, we refer to \cites{HKPS93, Oba94, Kuo96}.

First, consider the Gelfand triple
\begin{equation}\label{eqn:SchwartzTriple}
	\SchwartzFcts \subset L^2(\R) \subset \SchwartzDistribs,
\end{equation}
where $\SchwartzFcts$ denotes the space of smooth, rapidly decreasing functions on $\R$ (which is the projective limit of the Hilbert spaces $(\mathcal{H}_p, \|\cdot\|_p)$, $p\in \N_{0}$), and $\SchwartzDistribs$ denotes the topological dual space of $\SchwartzFcts$, called the tempered distributions. As usual, in \eqref{eqn:SchwartzTriple} we identify $L^2(\R)$ with its topological dual space, which is justified by the Riesz representation theorem. We denote the dual pairing on $\SchwartzFcts \times \SchwartzDistribs$ by $\langle \cdot, \cdot \rangle$. Further, we also make use of the corresponding complexified spaces $\mathcal{S}_{\C}(\R)$, $L_{\C}^2(\R)$ and $\mathcal{S}_{\C}'(\R)$, as well as the corresponding bilinear extension of the dual pairing. Now, the Bochner--Minlos theorem implies that there exists a unique probability measure $\mu$ on $(\SchwartzDistribs, \borel)$, where $\borel$ denotes the cylindrical $\sigma$-algebra on $\SchwartzDistribs$, s.th.
\begin{equation*}
	\int_{\SchwartzDistribs} \exp(i\langle \varphi, \omega \rangle) d\mu(\omega) = \exp\left(-\frac{1}{2} (\varphi, \varphi)_{L^2(\R)} \right) \text{ for all $\varphi \in \SchwartzFcts$}.
\end{equation*}
We call $\mu$ the \emph{white noise measure} and define $L^2(\mu) \coloneqq L^2(\SchwartzDistribs, \borel, \mu; \C)$. Using the fact that the linear map $\mathcal{S}_{\C}(\R) \to L^2(\mu): \eta \mapsto \langle \eta, \cdot \rangle$ is isometric and that $\mathcal{S}_{\C}(\R)$ is dense in $L_{\C}^2(\R)$, one can define $\langle f, \cdot \rangle$ as an element of $L^2(\mu)$ for any $f\in L_{\C}^2(\R)$ in a straightforward manner, and extend the dual pairing $\langle \cdot, \cdot \rangle$ to $L_{\C}^2(\R) \times \mathcal{S}_{\C}'(\R)$ in this sense. For this extension, the isometry above remains valid, i.e.\
\begin{equation}\label{eqn:wna_isometry}
	\| \langle f, \cdot \rangle \|_{L^2(\mu)} = \| f \|_{L_{\C}^2(\R)} \text{ for all $f\in L_{\C}^2(\R)$.}
\end{equation}
Further, one can show that for $f_1,...,f_n \in L^2(\R)$, $n\in\N$, the image measure of $\mu$ under the map $P_{f_1,...,f_n}: \SchwartzDistribs \to \R^n: \omega \mapsto (\langle f_i, \omega \rangle)_{1\leq i \leq n}$ is the Gaussian measure with mean zero and covariance matrix $\Lambda = ((f_i,f_j)_{L^2(\R)})_{1\leq i,j \leq n}$ on $\R^n$, i.e.\
\begin{equation}\label{eqn:wna_image_meas_lem}
	P_{f_1,...,f_n}(\mu) = \mathcal{N}(0,\Lambda).
\end{equation}
It also holds 
\begin{equation}\label{eqn:wna_int_exp}
	\int_{\SchwartzDistribs} \exp\left( \langle f, \omega\rangle \right) d\mu(\omega) = \exp\left(\frac{1}{2} \langle f, f\rangle\right) \text{ for all $f\in L_{\C}^2(\R)$}.
\end{equation}
Moreover, for every fixed $\omega_0 \in \SchwartzDistribs$, we define the translation
\begin{equation*}
	T_{+\omega_0}^{\mathcal{S}'} : \SchwartzDistribs \to \SchwartzDistribs: \omega \mapsto \omega + \omega_0,
\end{equation*}
which is continuous (w.r.t.\ the weak$^*$-topology on $\SchwartzDistribs$) and $\borel$-measurable. This is crucial for the following result:

\begin{proposition}[Cameron--Martin Formula in White Noise Analysis]\label{prop:cmf_wna}
	For any $f \in L^2(\R)$, the image measure $T_{+f}^{\mathcal{S}'}(\mu)$ of $\mu$ under the translation $T_{+f}^{\mathcal{S}'}$ is absolutely continuous w.r.t.\ $\mu$, and its Radon--Nikodym derivative is given by
	\begin{equation*}
		\frac{dT_{+f}^{\mathcal{S}'}(\mu)}{d\mu} = \exp\left( \langle f, \cdot \rangle - \frac{1}{2} \| f \|_{L^2(\R)} \right).
	\end{equation*}
\end{proposition}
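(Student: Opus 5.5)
The plan is to identify both sides through their characteristic functionals on $\SchwartzFcts$, and then use the uniqueness part of the Bochner--Minlos theorem. A probability measure on $(\SchwartzDistribs,\borel)$ is determined by its values on cylinder sets. Those values are fixed by the finite-dimensional distributions of $(\langle \varphi_1,\cdot\rangle,\dots,\langle\varphi_n,\cdot\rangle)$, and these in turn are determined by the Fourier transforms $\int \exp(i\langle \varphi,\cdot\rangle)\,d(\cdot)$ with $\varphi\in\SchwartzFcts$. I read the normalising term in the exponent as $\frac12\|f\|_{L^2(\R)}^2$, since this is what makes the density integrate to one.

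\emph{Step 1: the density is a probability density.} Set $g_f \coloneqq \exp(\langle f,\cdot\rangle - \frac12\|f\|_{L^2(\R)}^2)$, which is defined $\mu$-a.e.\ via the $L^2(\mu)$-extension of the dual pairing. It is nonnegative and measurable. Applying \eqref{eqn:wna_int_exp} to $f$ gives $\int g_f\,d\mu = 1$, so $g_f\mu$ is a probability measure on $\borel$.

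\emph{Step 2: Fourier transform of the translated measure.} Fix $\varphi\in\SchwartzFcts$. Since $f\in L^2(\R)$ defines an element of $\SchwartzDistribs$, we have $\langle\varphi,\omega+f\rangle = \langle\varphi,\omega\rangle + (\varphi,f)_{L^2(\R)}$ for every $\omega$, with no almost-everywhere issues. Hence
\begin{equation*}
\int \exp(i\langle\varphi,\omega\rangle)\, dT_{+f}^{\mathcal{S}'}(\mu)(\omega) = \exp\bigl(i(\varphi,f)_{L^2(\R)}\bigr)\exp\Bigl(-\tfrac12\|\varphi\|_{L^2(\R)}^2\Bigr).
\end{equation*}

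\emph{Step 3: Fourier transform of $g_f\mu$.} Here
\begin{equation*}
\int \exp(i\langle\varphi,\cdot\rangle)\, g_f\, d\mu = e^{-\frac12\|f\|^2}\int \exp(\langle i\varphi+f,\cdot\rangle)\,d\mu,
\end{equation*}
where we use that the extended pairing is linear $\mu$-a.e.\ in its first argument on $L^2_{\C}(\R)$. By \eqref{eqn:wna_int_exp} for the complex function $i\varphi+f$ and the bilinear pairing, the integral equals
\begin{equation*}
\exp\Bigl(\tfrac12\langle i\varphi+f,i\varphi+f\rangle\Bigr) = \exp\Bigl(-\tfrac12\|\varphi\|^2 + i(\varphi,f) + \tfrac12\|f\|^2\Bigr).
\end{equation*}
After multiplying by $e^{-\frac12\|f\|^2}$ this coincides with the expression in Step 2.

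\emph{Step 4: conclusion.} Both measures are probability measures with the same characteristic functional on $\SchwartzFcts$. By the uniqueness in Bochner--Minlos (or directly via cylinder sets and uniqueness of finite-dimensional Fourier transforms), $T_{+f}^{\mathcal{S}'}(\mu) = g_f\mu$. This gives absolute continuity with the stated Radon--Nikodym derivative.

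The only delicate point is Step 3: justifying \eqref{eqn:wna_int_exp} for complex $L^2_{\C}$ arguments and the a.e.\ linearity of the extended pairing. If needed, I would verify the complex case by analytic continuation in a parameter $\lambda\in\C$ applied to $\lambda\varphi + f$, using \eqref{eqn:wna_image_meas_lem} for the two-dimensional Gaussian law of $(\langle\varphi,\cdot\rangle,\langle f,\cdot\rangle)$.
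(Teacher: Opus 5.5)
The paper gives no proof of this proposition; it quotes it as a standard fact from white noise analysis. Your argument is correct and self-contained.

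\emph{Normalisation.} You are right to read the normalisation as $\frac12\|f\|_{L^2(\R)}^2$. As printed, without the square, the density would have total mass $\exp(\frac12\|f\|^2-\frac12\|f\|)$, which is not $1$ in general. The paper itself uses the squared version when it applies the proposition in the proof of Lemma~\ref{lem:CMF}.

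\emph{Your steps.}
\begin{itemize}
\item Step~2 holds for every $\omega$, because $\varphi\in\SchwartzFcts$ and $f$ sits in $\SchwartzDistribs$ via $\varphi\mapsto(\varphi,f)_{L^2(\R)}$.
\item Step~3 needs no analytic continuation. The paper already states \eqref{eqn:wna_int_exp} for complex arguments in $L^2_{\C}(\R)$. Alternatively, it follows directly from the bivariate Gaussian law of $(\langle\varphi,\cdot\rangle,\langle f,\cdot\rangle)$ given by \eqref{eqn:wna_image_meas_lem}.
\item In Step~4, the key fact is that $\sum_j t_j\langle\varphi_j,\omega\rangle=\langle\sum_j t_j\varphi_j,\omega\rangle$ for every $\omega$. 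Hence the characteristic functional on $\SchwartzFcts$ fixes all finite-dimensional laws, and a $\pi$--$\lambda$ argument fixes the measure on the cylinder $\sigma$-algebra $\borel$.
\end{itemize}

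\emph{Comparison of approaches.} Your strategy is the one the paper uses one level up in Lemma~\ref{lem:CMF}, where it compares the characteristic functionals of $T_{+h}(\mubb)$ and $\xi\mubb$. The textbook alternative is to invoke the general Cameron--Martin theorem for a Gaussian measure whose Cameron--Martin space is $L^2(\R)$. That route first requires identifying $\mu$ as such a measure on a suitable Hilbert space $\mathcal{H}_{-p}$, but it also gives the equivalence/singularity dichotomy for arbitrary shifts. Your route uses only the two Gaussian facts the paper lists. It does not give that dichotomy, but nothing in the paper needs it.
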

Similarly to \eqref{eqn:SchwartzTriple}, one can construct the Gelfand triple
\begin{equation*}
	\HidaFcts \subset L^2(\mu) \subset \HidaDistribs,
\end{equation*}
where $\HidaFcts$ is the space of Hida test functions (which is the projective limit of the Hilbert spaces $((\mathcal{H}_p), |\!|\!|\cdot|\!|\!|_p)$, $p\in \N_{0}$), and where $\HidaDistribs$ denotes its topological dual space, called the space of Hida distributions. Let $\llangle \cdot, \cdot \rrangle$ be the dual pairing on $\HidaFcts \times \HidaDistribs$, and define the $\mathrm{S}$- and $\mathrm{T}$-transform of an element $\Phi\in\HidaDistribs$ by
\begin{equation*}
	\mathrm{S}\Phi(\varphi) \coloneqq \llangle \wick{\exp(\langle \varphi, \cdot \rangle)}, \Phi \rrangle
	\text{, }
	\mathrm{T}\Phi(\varphi) \coloneqq \llangle \exp(\langle i\varphi, \cdot \rangle), \Phi \rrangle
	 \text{ for all } \varphi\in\mathcal{S}_{\C}(\R),
\end{equation*}
where we used the \emph{Wick exponential} given by
\begin{equation*}
	\wick{\exp(\langle \varphi, \cdot \rangle)} \coloneqq \exp\left(\langle \varphi, \cdot \rangle - \frac{1}{2}\langle \varphi, \varphi \rangle\right).
\end{equation*}
Note that the $\mathrm{S}$- and $\mathrm{T}$-transform are connected by the relation
\begin{equation*}
	\mathrm{T}\Phi(\varphi) = \exp\left(-\frac{1}{2}\langle \varphi, \varphi \rangle\right) \mathrm{S}\Phi(i\varphi).
\end{equation*}

\begin{definition}[$U$-Functionals]
	A function $U: \SchwartzFcts \to \C$ is called a \emph{$U$-functional} iff $U$ meets both of the following two conditions:
	\begin{enumerate}
		\item $U$ is \emph{ray-analytic}, i.e.\ for all $\varphi$, $\psi \in \SchwartzFcts$ the function $\R \incl s \mapsto U(\varphi + s \psi) \in \C$
		is entire analytic and therefore extends to an entire function on $\C$.
		\item $U$ is \emph{uniformly bounded of exponential order 2}, i.e.\ there exist constants $A$, $B\in\R_{\geq 0}$ and $p \in \N_{0}$ s.th.\ for all $z\in\C$, $\varphi \in \SchwartzFcts$ it holds
		\begin{equation*}
			|U(z\varphi)| \leq A \exp(B|z|^2\|\varphi\|_{p}^2).
		\end{equation*}
	\end{enumerate}
\end{definition}

The following theorem is crucial for handling Hida distributions, since it allows us to analyze Hida distributions by means of their corresponding $U$-functionals:

\begin{theorem}[Characterization of Hida Distributions]
	Both $\mathrm{S}$- and $\mathrm{T}$-transform are $\C$-linear bijections from the Hida space $\HidaDistribs$ to the space of $U$-functionals.
\end{theorem}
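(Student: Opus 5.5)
The plan is to follow the standard Potthoff--Streit argument for the $\mathrm{S}$-transform and then transfer the result to $\mathrm{T}$ via the relation $\mathrm{T}\Phi(\varphi)=\exp(-\tfrac12\langle\varphi,\varphi\rangle)\mathrm{S}\Phi(i\varphi)$. This relation is invertible, since $\mathrm{S}\Phi(\varphi)=\exp(-\tfrac12\langle\varphi,\varphi\rangle)\mathrm{T}\Phi(-i\varphi)$ after analytic continuation along rays. It also preserves ray-analyticity and exponential order $2$ bounds, because $|\exp(-\tfrac12 z^2\langle\varphi,\varphi\rangle)|\le \exp(\tfrac12|z|^2\|\varphi\|_0^2)$. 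So it suffices to treat $\mathrm{S}$.

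\textbf{$\mathrm{S}\Phi$ is a $U$-functional.} Linearity is clear. Continuity of $\Phi$ gives $p\in\N_0$ and $C>0$ with $|\llangle F,\Phi\rrangle|\le C|\!|\!|F|\!|\!|_p$. The Wick exponential $\wick{\exp(\langle \varphi,\cdot\rangle)}$ has chaos kernels $\varphi^{\otimes n}/n!$, so
\begin{equation*}
|\!|\!|\wick{\exp(\langle z\varphi,\cdot\rangle)}|\!|\!|_p^2=\exp\big(|z|^2\|\varphi\|_p^2\big).
\end{equation*}
This yields the bound with $B=\tfrac12$. For ray-analyticity, expand $s\mapsto \wick{\exp(\langle\varphi+s\psi,\cdot\rangle)}$ as a power series in $s$ converging in $(\mathcal{H}_p)$ for all $s\in\C$, and pair termwise with $\Phi$.

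\textbf{Injectivity.} Suppose $\mathrm{S}\Phi=0$. Differentiating $\mathrm{S}\Phi(\sum_j z_j\varphi_j)$ at $z=0$ shows that $\Phi$ annihilates all $\wick{\langle\varphi_1,\cdot\rangle\cdots\langle\varphi_n,\cdot\rangle}$, i.e. $I_n(\varphi_1\hat\otimes\cdots\hat\otimes\varphi_n)$. Linear combinations of these are dense in $\HidaFcts$ by polarization and density of $\mathcal{S}(\R)^{\hat\otimes n}$ in the symmetric chaos components of $(\mathcal{H}_p)$. Hence $\Phi=0$.

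\textbf{Surjectivity.} This is the main obstacle. Given a $U$-functional $U$, first polarize. Ray-analyticity together with the exponential bound gives, via Hartogs-type arguments, joint analyticity of $(z_1,\dots,z_n)\mapsto U(\sum z_j\varphi_j)$. Cauchy estimates on polydisks then show that the $n$-th Taylor coefficient is a symmetric $n$-linear form on $\mathcal{S}(\R)$ bounded by
\begin{equation*}
A\Big(\frac{(2B)^{n/2} e^{n/2}}{n^{n/2}}\Big)^{\!\cdot}\;\prod_j\|\varphi_j\|_p ,
\end{equation*}
i.e. essentially $A\,(Be/n)^{n/2}\,n^n/n!\cdot\prod_j\|\varphi_j\|_p$ up to choosing the radius $r\sim\sqrt{n/B}$. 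By the kernel theorem, raising $p$ to some $q>p$ with Hilbert--Schmidt embedding $\mathcal{H}_q\hookrightarrow\mathcal{H}_p$, this form is given by a kernel $F_n\in(\mathcal{H}_{-q}^{\hat\otimes n})$ whose norm is controlled by $\|i_{q,p}\|_{HS}^n$ times the coefficient bound. Define $\Phi$ formally by $\llangle I_n(f_n),\Phi\rrangle=n!\langle f_n,F_n\rangle$. Then check that $\sum_n n!\,|F_n|_{-q'}^2<\infty$ for $q'$ large, using Stirling ($n^n/n!\sim e^n$), which makes the geometric factor $\|i_{q',p}\|_{HS}^{2n}(2Be^2)^n$ summable once $q'$ is chosen with small enough Hilbert--Schmidt norm. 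Hence $\Phi\in(\mathcal{H}_{-q'})\subset\HidaDistribs$ and $\mathrm{S}\Phi=U$ by construction on $\mathcal{S}(\R)$. The delicate bookkeeping lies entirely in these coefficient and norm estimates. Bijectivity of $\mathrm{T}$ then follows from the first paragraph.
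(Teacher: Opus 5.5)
The paper does not prove this statement at all---it is quoted as the standard Potthoff--Streit characterization from the white noise literature---and your outline is the classical argument given there, so it is fine as a sketch. That argument is: the Wick-exponential norm identity for the growth bound, chaos density for injectivity, and Cauchy estimates, polarization and the kernel theorem with Hilbert--Schmidt embeddings for surjectivity, which leads to the correct summability condition $2e^2B\|i_{q',p}\|_{HS}^2<1$.

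Real work is hidden in two places. First, passing from real ray-analyticity to joint analyticity of $(z_1,\dots,z_n)\mapsto U(\sum_j z_j\varphi_j)$ is not covered by classical Hartogs verbatim, because $U$ lives only on real test functions; this same fact is silently needed when you claim the $\mathrm{S}\leftrightarrow\mathrm{T}$ relation preserves ray-analyticity. Second, the coefficient bound must come from Cauchy estimates along real rays followed by polarization, as your factor $n^n/n!$ reflects, rather than from polydisk estimates, because the growth hypothesis only controls $U$ at complex multiples of real test functions.
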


\begin{theorem}[Criterion for Convergence in $\HidaDistribs$]\label{thm:conv_in_hidaspace}
	Let $(\Phi_n)_{n\in\N}$ be a sequence in $\HidaDistribs$. If the corresponding sequence of $U$-functionals $(U_n)_{n\in\N} \coloneqq (\mathrm{S}^{-1}\Phi_n)_{n\in\N}$ satisfies both of the following two conditions
	\begin{enumerate}
		\item $(U_n(\varphi))_{n\in\N}$ is a Cauchy sequence for all $\varphi \in \SchwartzFcts$\text{, and}
		\item there exist $p$, $N\in\N$ and $A$, $B\in\R_{\geq0}$ s.th.\
		\begin{equation*}
			|U_n(z\varphi)| \leq A \exp(B|z|^2\|\varphi\|_p^2)\quad \text{for all $z\in\C$, $\varphi \in \SchwartzFcts$ and $n\geq N$}.
		\end{equation*}
	\end{enumerate}
	Then, the point-wise limit $U\coloneqq \lim_{n \to \infty} U_n$ is a $U$-functional, and we have $\Phi_n \to \mathrm{S}^{-1}(U) \eqqcolon \Phi$ strongly in $\HidaDistribs$ as $n\to\infty$. The analogous statement holds if the $\mathrm{S}$-transform is exchanged for the $\mathrm{T}$-transform.
\end{theorem}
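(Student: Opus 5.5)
The plan is to identify the limit by its $U$-functional and then use the characterization theorem to upgrade pointwise convergence to convergence in $\HidaDistribs$. Write $U_n \coloneqq \mathrm{S}\Phi_n$; the $\mathrm{T}$-transform case is handled in the same way.

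\emph{Step 1: the limit is a $U$-functional.} By (1) the pointwise limit $U(\varphi)\coloneqq\lim_n U_n(\varphi)$ exists. Passing to the limit in (2) gives $|U(z\varphi)|\le A\exp(B|z|^2\|\varphi\|_p^2)$. For ray-analyticity, fix $\varphi,\psi$. The entire functions $w\mapsto U_n(\varphi+w\psi)$, $n\ge N$, are locally uniformly bounded. To see this, apply (2) with $z=1$ to the complex direction $\varphi+w\psi$; this needs the bound in (2) for complex test functions in $\mathcal{S}_{\C}(\R)$, which one obtains from the standard polarization/Hartogs argument, at the cost of enlarging $B$. Montel's theorem together with pointwise convergence on the real line then gives locally uniform convergence to an entire function that extends $s\mapsto U(\varphi+s\psi)$. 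Hence $U$ is a $U$-functional, and by the characterization theorem $\Phi\coloneqq\mathrm{S}^{-1}U\in\HidaDistribs$.

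\emph{Step 2: $\Phi_n\to\Phi$ strongly.} Here I would use the standard proof of the characterization theorem. The uniform exponential bound yields $q>p$ and $C$, depending only on $A,B,p$, such that $|\!|\!|\Phi_n|\!|\!|_{-q}\le C$ for all $n\ge N$. This comes from the Cauchy estimates on the chaos kernels $F_{n,k}$ of $U_n$:
\begin{equation*}
|\langle F_{n,k},\varphi^{\otimes k}\rangle|\le A\,(eB/k)^{k/2}\|\varphi\|_p^k ,
\end{equation*}
followed by polarization and a Hilbert--Schmidt embedding $\mathcal{H}_{q}\hookrightarrow\mathcal{H}_p$. Moreover, pointwise convergence of $U_n$ gives convergence of $\llangle \wick{\exp(\langle\varphi,\cdot\rangle)},\Phi_n\rrangle$, since these are exactly the $U_n(\varphi)$. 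The exponentials span a dense subset of $(\mathcal{H}_q)$. A bounded sequence in the Hilbert space $(\mathcal{H}_{-q})$ that converges on a total set therefore converges weakly, and its limit must be $\Phi$.

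Finally, a bounded weakly convergent sequence in $(\mathcal{H}_{-q})$ converges in norm in $(\mathcal{H}_{-q-1})$, because the embedding is Hilbert--Schmidt, hence compact. This gives strong convergence in $\HidaDistribs$, whose strong topology is the inductive limit topology.

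The main obstacle is Step 2's uniform norm bound with constants independent of $n$, especially the polarization from diagonal to general tensors with controlled constants. I would take this directly from the proof of the characterization theorem, which is quantitative in $A$, $B$ and $p$.
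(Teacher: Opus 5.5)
Your proposal is correct and is essentially the standard proof from the white noise literature, which the paper quotes without proving: a uniform bound in some $(\mathcal{H}_{-q})$ from the quantitative form of the characterization theorem, weak convergence via totality of the Wick exponentials, and norm convergence in $(\mathcal{H}_{-q-1})$ through the Hilbert--Schmidt embedding. Your Step~1 can be dropped entirely, because the weak limit $\Psi$ from Step~2 already satisfies $\mathrm{S}\Psi(\varphi)=\lim_n U_n(\varphi)=U(\varphi)$, so $U$ is automatically a $U$-functional and the sketched bound on complex test functions is never needed (also, the Cauchy estimate gives $(2eB/k)^{k/2}$ rather than $(eB/k)^{k/2}$, which is harmless).
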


\begin{theorem}[Criterion for Bochner-Integrability in $\HidaDistribs$]\label{thm:bochner_int_in_hidaspace}
	Let $(\Omega, \mathcal{A}, m)$ be a measure space. If the map $\Phi:\Omega \to \HidaDistribs: \lambda \mapsto \Phi_{\lambda}$ satisfies that
	\begin{enumerate}
		\item for all $\varphi\in\SchwartzFcts$ the map $\Omega \incl \lambda \mapsto \mathrm{S}\Phi_{\lambda}(\varphi) \in\C$ is measurable, and that
		\item there exist $A\in \mathcal{L}_+^1(\Omega,m;\overline{\R})$, $B\in \mathcal{L}_+^{\infty}(\Omega,m;\overline{\R})$ and $p\in\N_{0}$ s.th.\ for all $\lambda\in\Omega$, $z\in\C$ and $\varphi\in\SchwartzFcts$ it holds
		\begin{equation*}
			|\mathrm{S}\Phi_{\lambda}(z\varphi)| \leq A(\lambda) \exp(B(\lambda)|z|^2\|\varphi\|_p^2),
		\end{equation*}
	\end{enumerate}
	then there exists some $p'\in\N$ s.th.\ $\Phi\in L^1(\Omega,m; (\mathcal{H}_{-p'}))$, i.e.\ $\Phi$ is Bochner-integrable with values in $(\mathcal{H}_{-p'})$ w.r.t.\ $m$. In this case, it holds in particular that
	\begin{equation*}
		\int_{\Omega} \Phi_{\lambda} dm(\lambda) \in (\mathcal{H}_{-p'}) \subset \HidaDistribs,
	\end{equation*}
	and taking the $\mathrm{S}$-transform commutes with the Bochner-integration, i.e.\ for all $\varphi \in \SchwartzFcts$ we have
	\begin{equation*}
		\left( \mathrm{S} \left( \int_{\Omega} \Phi_{\lambda} dm(\lambda) \right) \right)(\varphi)
		= \int_{\Omega} \mathrm{S}\Phi_{\lambda}(\varphi) dm(\lambda).
	\end{equation*}
	The analogous statement holds if the $\mathrm{S}$-transform is exchanged for the $\mathrm{T}$-transform.
\end{theorem}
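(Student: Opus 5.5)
The plan is to reduce everything to the quantitative form of the characterization theorem for Hida distributions. Its proof (Kondratiev--Streit, Potthoff--Streit; see \cite{HKPS93, Kuo96}) gives an explicit norm estimate in addition to the bijection statement. If a $U$-functional $U=\mathrm{S}\Phi$ satisfies $|U(z\varphi)|\le A\exp(B|z|^2\|\varphi\|_p^2)$, then for every $q>p$ with $\rho\coloneqq 2e^2 B\,\|i_{q,p}\|_{HS}^2<1$ we have $\Phi\in(\mathcal{H}_{-q})$ and
\begin{equation*}
	|\!|\!|\Phi|\!|\!|_{-q}\le A\,(1-\rho)^{-1/2}.
\end{equation*}
Here $i_{q,p}:\mathcal{H}_q\to\mathcal{H}_p$ is the Hilbert--Schmidt embedding. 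I would first state this estimate as a lemma. It is obtained by expanding $U$ in its Taylor series along rays, bounding the kernels of the chaos decomposition through the Cauchy estimate on circles of optimal radius, and summing the resulting geometric series.

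Next I would obtain uniformity in $\lambda$. Since $B\in\mathcal{L}^\infty_+$, I first modify $\Phi$ on the $m$-null set where $B(\lambda)>\|B\|_\infty$ (or where $A(\lambda)=\infty$); this does not affect Bochner integrability or the value of the integral. Using $\|i_{q,p}\|_{HS}\to0$ as $q\to\infty$, I choose $p'\ge p$ so large that $\rho_0\coloneqq 2e^2\|B\|_\infty\|i_{p',p}\|_{HS}^2<1$. The lemma then gives $\Phi_\lambda\in(\mathcal{H}_{-p'})$ and $|\!|\!|\Phi_\lambda|\!|\!|_{-p'}\le (1-\rho_0)^{-1/2}A(\lambda)$ for all $\lambda$. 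Integrating this bound, $\int|\!|\!|\Phi_\lambda|\!|\!|_{-p'}\,dm<\infty$ because $A\in\mathcal{L}^1$.

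The main obstacle is strong measurability of $\lambda\mapsto\Phi_\lambda\in(\mathcal{H}_{-p'})$. The space $(\mathcal{H}_{-p'})$ is a separable Hilbert space, so by Pettis' theorem weak measurability suffices. That is, we need $\lambda\mapsto\llangle\psi,\Phi_\lambda\rrangle$ to be measurable for all $\psi$ in the dual $(\mathcal{H}_{p'})$. By hypothesis this holds for $\psi=\wick{\exp(\langle\varphi,\cdot\rangle)}$ with $\varphi\in\SchwartzFcts$.
\begin{enumerate}
	\item The span of these exponentials is dense in $(\mathcal{H}_{p'})$. By polarization and differentiation in $\varphi$ it contains all Wick monomials, and hence it is total. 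Taking complex $\varphi$ via ray-analyticity if needed, linear combinations then give measurability on the dense span.
	\item For a general $\psi$, take $\psi_k\to\psi$ in $(\mathcal{H}_{p'})$ from that span. Then $\llangle\psi_k,\Phi_\lambda\rrangle\to\llangle\psi,\Phi_\lambda\rrangle$ pointwise in $\lambda$, because each $\Phi_\lambda$ is a continuous functional.
	\item A pointwise limit of measurable functions is measurable, which gives weak measurability.
\end{enumerate}
Together with the integrable norm bound, this yields $\Phi\in L^1(\Omega,m;(\mathcal{H}_{-p'}))$.

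Finally, $\Phi\mapsto\mathrm{S}\Phi(\varphi)=\llangle\wick{\exp(\langle\varphi,\cdot\rangle)},\Phi\rrangle$ is a continuous linear functional on $(\mathcal{H}_{-p'})$. Bounded linear maps commute with Bochner integrals, so the S-transform can be taken inside the integral, which gives the stated identity. For the $\mathrm{T}$-transform I would use the relation $\mathrm{T}\Phi(\varphi)=e^{-\frac12\langle\varphi,\varphi\rangle}\mathrm{S}\Phi(i\varphi)$. This turns the T-bounds into S-bounds of the same form, with $A$ unchanged and $B$ replaced by $B+\tfrac12$ (possibly after increasing $p$), so the same argument applies.
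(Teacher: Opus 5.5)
Your proposal is correct and is essentially the standard argument from the white noise literature. That argument combines the quantitative Potthoff--Streit characterization theorem, a uniform level $p'$ obtained from $B\in\mathcal{L}^\infty_+$ after discarding a null set, Pettis' theorem via totality of the Wick exponentials in $(\mathcal{H}_{p'})$, and commuting the bounded functional $\Phi\mapsto\mathrm{S}\Phi(\varphi)$ with the Bochner integral; the paper gives no proof of its own and simply imports this criterion from \cite{HKPS93, Kuo96}, and the only point worth making explicit is that in the $\mathrm{T}$-case the measurability of $\lambda\mapsto\mathrm{S}\Phi_\lambda(\varphi)=e^{-\frac12\langle\varphi,\varphi\rangle}\mathrm{T}\Phi_\lambda(-i\varphi)$ genuinely needs your ray-analyticity remark (the Taylor coefficients of $z\mapsto\mathrm{T}\Phi_\lambda(z\varphi)$ at $0$ are limits of real difference quotients, hence measurable in $\lambda$).
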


\subsection{The Brownian Bridge}\label{sect:bb}
This subsection introduces our handling of the Brownian bridge, based on the approach taken in \cite{GV18}, p.\ 341 and Subsection 5: As seen in Subsection \ref{sect:wna}, $(\langle \ind_{[0,x)}, \cdot \rangle)_{x>0}$ is a family of well-defined elements of $L^2(\mu)$. By an application of the Kolmogorov--\u{C}ensov--Lo\`{e}ve theorem, it can be shown that its existing continuous modification is a standard Brownian motion on $(\SchwartzDistribs, \borel, \mu)$, which we denote by $(\bm_x)_{x\geq 0}$, where $\bm_0 = 0$. Hence,
\begin{equation*}
    \bb_x \coloneqq \bm_x - x\bm_1 \quad\text{for all $x\in(0,1]$ and}\quad \bb_0 \coloneqq 0
\end{equation*}
defines a Brownian bridge $\bb = (\bb_x)_{x\in [0,1]}$ from 0 to 0. Let
\begin{equation*}
    q_x \coloneqq \ind_{[0,x)}-x\ind_{[0,1)} \text{\, for all \,} x \in (0,1] \text{\, and \,} q_0 \coloneqq 0.
\end{equation*}
Then $\bb_x = \langle q_x, \cdot \rangle$ in $L^2(\mu)$ for all $x\in [0,1]$; in particular, for each $x\in [0,1]$, there exists a $\mu$-null set $N_x$ (depending on $x$) s.th.\ $\bb_x(\omega) = \langle q_x, \omega \rangle$ for all $\omega\in \SchwartzDistribs\setminus N_x$. We make use of the known facts that $\bb_x \sim \mathcal{N}(0,x(1-x))$ for all $x\in [0,1]$ and $\mathrm{Cov}(\bb_x,\bb_y)=x\wedge y - xy$ for all $x$, $y\in [0,1]$. Further, it is natural to identify the process $\bb$ with the corresponding path-valued map
\begin{equation*}
	\bb: \SchwartzDistribs \to \ltwo: \omega \mapsto \bb(\omega) \coloneqq [(\bb_x(\omega))_{x\in (0,1)}],
\end{equation*}
where $\ltwo = L^2(0,1)$, which is a well-defined random variable, since $(\bb_x)_{x\in (0,1)}$ has surely continuous paths, and $\bb$ is measurable w.r.t.\ $\borel(\ltwo)$. The latter fact follows, since $\bb$ is well-defined as a $\co$-valued mapping, the canonical embedding $\co \hookrightarrow \ltwo$ is continuous and $\co$-valued random variables are exactly given by the real-valued processes on $\unitint$ with continuous paths, see \cite{Kal97}, Lemma 14.1.

Consequently, we define $\mubb$ to be the image measure of the white noise measure under the map $\bb$, i.e.\ $\mubb \coloneqq \bb(\mu)$. This means that $\mubb$ is the law of a Brownian bridge from 0 to 0 on $\ltwo$, or equivalently, a centered Gaussian measure on $\ltwo$ with covariance operator $Q: \ltwo \to \ltwo$ given by
\begin{equation}\label{def:cov_operator}
	(Q\eta)_x \coloneqq \int_0^1 (x\wedge y - xy) \, \eta_y \, dy \quad \text{for all } x\in\unitint,\, \eta\in\ltwo.
\end{equation}

\begin{remark}\label{rem:embeddings_and_measurability}
	Let $\coec$ be the canonical embedding of $\co$ into $\ltwo$, and let $\bar{z}$ denote the unique existing $\co$-representative of a function $z\in\coec$. This embedding allows us to leverage the Hilbert space structure of $\ltwo$.
	Further, we view $\co$ as a Banach space, equipped with the supremum norm $\|\cdot\|_{\infty}$, $\ltwo$ as a Hilbert space, equipped with the canonical scalar product $(\cdot,\cdot)_{\ltwo}$ and $\coec$ as a normed subspace of $\ltwo$ equipped with the restricted norm $\|\cdot\|_{\ltwo}\rvert_{\coec}$. Let $\iota: \co \hookrightarrow \ltwo$ denote the canonical inclusion of $\co$ into $\ltwo$. All of these spaces can be viewed as measurable spaces, equipped with the induced Borel $\sigma$-algebras; these measure spaces are compatible in the sense that $\iota\rvert_{\coec}$ is a Borel isomorphism and $\borel(\coec) \subset \borel(\ltwo)$ which can be proven using \cite{Par67}, p.\ 135, Theorem 2.4. In particular it holds $\coec \in \borel(\ltwo)$.
\end{remark}

\subsection{Donsker's Delta}\label{sect:DonskersDelta}
The following is informed by \cite{GV18}, Example 2.6: For $a\in\R$, $0\neq \eta \in L^2(\R)$ we define \emph{Donsker's delta} as
\begin{equation*}
	\delta_a(\langle \eta, \cdot \rangle) \coloneqq \frac{1}{2\pi} \int_{\R} \exp(i(\langle\eta,\cdot\rangle-a) x) \, dx \in \HidaDistribs.
\end{equation*}
This expression is motivated by the Fourier representation of the Dirac measure $\delta_a$, and well-defined by Lemma \ref{thm:bochner_int_in_hidaspace} with $(\Omega,\mathcal{F},m)=(\R,\borel(\R),dx)$.

Considering the Brownian bridge $(\bb_x)_{x\in (0,1)}$, it holds
\begin{align}
	\mathrm{S}\delta_0(\bb_x)(\varphi) &= \frac{1}{\sqrt{2\pi x(1-x)}} \exp\left( -\frac{1}{2x(1-x)} \left(\int_0^1 q_x(s) \varphi_s ds \right)^2 \right),\label{eqn:Strafo_DD}
\end{align}
for all $x\in\unitint$ and $\varphi\in\SchwartzFcts$. In use of the approximate identity $(\gmoll)_{\varepsilon>0}$ given by the Gaussian densities $\gmoll \coloneqq (2\pi \varepsilon)^{-1/2} \exp(-(\cdot)^2/(2\varepsilon))$, one can obtain the approximation
\begin{equation}\label{eqn:DDforBM_approx}
	L_+^2(\SchwartzDistribs, \mu) \incl \gmoll (\bb_x) \to \delta_0(\bb_x) \text{ in $\HidaDistribs$ as $\varepsilon \downarrow 0$}.
\end{equation}

\subsection{The Considered Gelfand Triple and Bounded Variation}\label{sect:gelf_triple_bv}
In this paper, we consider a special case of the setting in \cite{RZZ12}, Section 2, largely equivalent to that in \cite{RZZ12}, Section 6, which we describe in this subsection.

Let $\ltwo$ denote the real separable Hilbert space $L^2(0,1)$ with the canonical scalar product $(\cdot,\cdot)_{\ltwo}$ and the induced norm $\|\cdot\|_{\ltwo}$. When viewed as a measurable space, $\ltwo$ is endowed with its Borel $\sigma$-algebra $\borel(\ltwo)$. Further, we define the linear operator $A$ on $\ltwo$ to be the Dirichlet Laplacian on the one-dimensional open unit interval $(0,1)$ (scaled by $1/2$) given by
\begin{equation*}
	A \coloneqq -\frac{1}{2} \frac{d^2}{dx^2}: D(A)\subset \ltwo \to \ltwo,\text{ where }D(A) \coloneqq \ho \cap H^2,
\end{equation*}
using the simplified notations $\ho \coloneqq H_0^{1,2}(0,1)$ and $H^2\coloneqq H^{2,2}(0,1)$. One can check that $A$ meets the requirements formulated in \cite{RZZ12}, Hypothesis 2.1. Further, using methods from Fourier analysis, one can show that $\ltwo$ has a complete orthonormal basis of eigenvectors of $A$ given by
\begin{equation*}
	(e_j)_{j\in\N} \coloneqq (\sqrt{2}\sin(j\pi \cdot))_{j\in\N}, \text{ where } Ae_j = \frac{(j\pi)^2}{2} e_j \text{ for all } j\in \N.
\end{equation*}
Since $A$ is strictly positive, the measure $\mubb$ is non-degenerate and has full topological support. Recalling \eqref{def:cov_operator}, it is straightforward to check that $A^{-1} = 2Q$.
Further, one can show that for $c_j \coloneqq \|e_j\|_{\ho} = (1+(j\pi)^2)^{1/2}\geq 1$, $j\in\N$,
\begin{equation*}
	H_1 \coloneqq \{f \in \ltwo \mid \|f\|_{H_1} <\infty\} \text{, where } (x,y)_{H_1} = \sum_{j=1}^{\infty} c_j^2 (x,e_j)_{\ltwo} (y,e_j)_{\ltwo},
\end{equation*}
forms a Hilbert space which is isomorphic to $(\ho, (\cdot,\cdot)_{\ho})$. To define the intended notion of bounded variation, the function space
\begin{equation*}
	L(\log L)^{1/2} (\ltwo, \mubb) \coloneqq \{f:\ltwo\to\R \mid \text{$f$ Borel measurable, } A_{1/2}(|f|) \in L^1(\ltwo,\mubb) \}
\end{equation*}
plays an important role, where $A_{1/2}(x) \coloneqq \int_{0}^{x} (\log (1+s))^{1/2} ds$ for all $x \geq 0$.

\medskip

Let $D\F:\ltwo \to \ltwo$ denote the Fr\'{e}chet-derivative of a function $\F:\ltwo\to\R$ (if it exists). Further, let $C_b^1(\ltwo)$ be the set of all bounded differentiable functions with continuous and bounded derivatives.

We introduce a family of $\ltwo$-valued functions on $\ltwo$ by
\begin{equation*}
	(C_b^1)_{\ho\cap H^2}
	\coloneqq
	\Big\{G \;\Big|\; G(z) = \sum_{j=1}^{m}g_j(z)l^j, \; z\in\ltwo, \; g_j\in C_b^1(\ltwo), \; l^j\in \ho\cap H^2 \Big\},
\end{equation*}
and further, let $D^*$ be the adjoint of $D:C_b^1(\ltwo)\subset L^2(\ltwo,\mubb)\to L^2(\ltwo,\mubb;\ltwo).$ It holds $(C_b^1)_{\ho\cap H^2} \subset \mathrm{Dom}(D^*)$, and for every $G=\sum_{j=1}^{m}g_j(\cdot)l^j \in (C_b^1)_{\ho\cap H^2}$ it holds, considering \cite{RZZ12}, Equation (3.5) and linearity of $D^*$, that
\begin{equation}\label{eqn:D*on_class_L2vald_fcts}
	D^*G(z) = \sum_{j=1}^{m} \left( -\langle l^j, Dg_j(z)\rangle +2g_j(z) \langle Al^j, z\rangle \right) \quad \text{for all } z\in\ltwo.
\end{equation}
For any $\dens \in L(\log L)^{1/2} (\ltwo, \mubb)$, we define
\begin{equation*}
	V(\dens) \coloneqq \sup_{G\in (C_b^1)_{\ho\cap H^2}, \|G\|_{\ho}\leq 1} \int_{\ltwo} D^*G(z)\dens(z) d\mubb (z).
\end{equation*}
A function $\dens$ on $\ltwo$ is called a \emph{BV function in the Gelfand triple $(\ho, \ltwo, \hno)$}, notated as $\dens \in \mathrm{BV}(\ltwo, \ho)$, iff $\dens \in L(\log L)^{1/2} (\ltwo, \mubb)$ and $V(\dens) < \infty$.

\subsection{A Sobolev Space on $\ltwo$ and Dense Subspaces}
Let $W^{1,2}(\ltwo;\mubb) \subset L^2(\ltwo, \mubb)$ be the Sobolev space associated to $\mubb$, i.e.\ the closure of $C_b^1(\ltwo)$ w.r.t.\ the norm given by
\begin{equation*}
	\| F \|_{W^{1,2}(\ltwo;\mubb)}^2 = \| F \|_{L^2(\ltwo;\mubb)}^2 + \frac{1}{2} \int_{\ltwo} \|DF\|_{\ltwo}^2 d\mubb \quad \text{for all } F \in C_b^1(\ltwo).
\end{equation*}
Indeed, $W^{1,2}(\ltwo;\mubb)$ is a dense subspace of $L^2(\ltwo,\mubb)$. Further, set
\begin{gather*}
	\exp(C^{\infty}) \coloneqq \mathrm{span}_{\R} \{ \sin((\eta, \cdot)_{\ltwo}), \cos((\eta, \cdot)_{\ltwo}) \mid \eta\in C^{\infty}[0,1] \},\\
	\mathcal{F}C_b^{\infty}(\ltwo) \coloneqq \big\{ g((\eta_1, \cdot)_{\ltwo}, ..., (\eta_k, \cdot)_{\ltwo}) \mid g \in C_b^{\infty}(\R^k),\, k\in\N,\, \eta_1, ..., \eta_k \in \ltwo \big\},
\end{gather*}
where $C_b^{\infty}(\R^k)$ denotes the set of all infinitely differentiable functions on $\R^k$ s.th.\ all partial derivatives are bounded. It holds that
\begin{equation*}
	\exp(C^{\infty}) \subset \mathcal{F}C_b^{\infty}(\ltwo) \subset C_b^1(\ltwo) \subset W^{1,2}(\ltwo;\mubb) \subset L^2(\ltwo;\mubb),
\end{equation*}
and it is known that $\exp(C^{\infty})$ is dense in $W^{1,2}(\ltwo;\mubb)$, see \cite{GV18}, Theorem 5.2.

\subsection{Miscellaneous}\label{sect:misc_intro}
We frequently use the fact that $H^{1,2}(0,1)$ can be continuously embedded into $C_b(0,1)$, i.e.\ the set of bounded and continuous functions on $(0,1)$. More precisely, for every $h\in H^{1,2}(0,1)$, there exists a unique continuous and bounded representative $\bar{h}$ of $h$, and it holds $\|\bar{h}\|_{\infty} \leq C\, \|h\|_{H^{1,2}(0,1)}$ for some constant $C<\infty$ independent of $h$, see \cite{AF03}, Section 4. Further, it holds that for any $h\in\ho$, the unique existing continuous representative $\bar{h}$ can get uniquely extended to $[0,1]$ by zero, see \cite{AF03}, Section 3. The considerations above immediately yield that $\delta_x\in\hno$.
Moreover, for every $x\in (0,1)$, we define $\ddext(z) \coloneqq \bar{z}_x$ for all $z\in\coec$ and extend this map to $\ltwo$ by zero.

\section{The Considered Dirichlet Forms}
In what follows, we fix $a\in (0,1)$ and consider the symmetric bilinear forms given by
\begin{align*}
	\mathcal{E}^a(F,G) &\coloneqq \int_{\ltwo} (D F(z),D G(z))_{\ltwo} \, \densa(z) \, d\mubb(z) \quad \text{for } F,\, G \in \cbone,\\
	\text{and} \quad
	\mathcal{E}(F,G) &\coloneqq \int_{\ltwo} (D F(z),D G(z))_{\ltwo} \, \dens(z) \, d\mubb(z)\quad \text{for } F,\, G \in \cbone,
\end{align*}
where the considered densities $\densa,\, \dens: \ltwo \to \R$ are defined by
\begin{equation}\label{def:densa}
	\densa(z) \coloneqq \indzi(\bar{z}_a)\quad \text{and} \quad \dens(z) \coloneqq \int_0^1 \indzi(\bar{z}_x) \, dx \quad \text{for } z \in \coec,
\end{equation}
respectively, and extended to $\ltwo$ by zero. The central aim of this section is to prove the following theorem:

\begin{theorem}\label{thm:df_closable_qreg}
	The symmetric bilinear forms $(\mathcal{E}^a, C_b^1(\ltwo))$, and $(\mathcal{E}, C_b^1(\ltwo))$ are closable on $L^2(\ltwo, \rho^a \mubb)$ and $L^2(\ltwo, \rho \mubb)$, respectively. Furthermore, their closures $(\mathcal{E}^a, \mathcal{F}^a)$ and $(\mathcal{E}, \mathcal{F})$ are quasi-regular local Dirichlet forms on $L^2(\ltwo, \rho^a \mubb)$ and $L^2(\ltwo, \rho \mubb)$, respectively.
\end{theorem}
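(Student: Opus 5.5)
Closability will come from the fact that both densities are "half-space indicators" in a suitable sense, and quasi-regularity and locality from the general framework of \cite{RZZ12} (or Ma--R\"ockner), once closability is in hand.

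\textbf{Closability.} I would use the Hamza-type criterion for gradient forms on Gaussian spaces. A cleaner route, avoiding Hamza regularity of $\densa$ along every direction, is to reduce to known closable forms.

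For $\densa$, the set $\Gamma_a := \{z\in\coec : \bar z_a\ge 0\}$ is closed and convex in $\coec$. The key identity is $\densa\mubb = \ind_{\Gamma_a}\mubb$. The direction $h\in\ho$ acts on $\bar z_a$ via $\bar z_a\mapsto \bar z_a+\bar h_a$. So for $h$ with $\bar h_a=0$ the density is invariant under the shift $z\mapsto z+sh$. For $h$ with $\bar h_a\neq 0$, the map $s\mapsto \densa(z+sh)$ is a monotone indicator of a half-line, hence of the form $\ind_{[c,\infty)}$. I will check the Hamza condition (the density is positive a.e.\ exactly on an open interval of $s$) on the directions $\eta_k$, taken as an orthonormal basis of $\ho$-smooth functions. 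For this I decompose $\mubb$ along the direction $e_k$ as $\mubb = \int \mathcal N(0,\lambda_k)\circ(\cdot)\,d\mubb^{\perp}$. Along each line $\{z+se_k\}$ with $z\in\coec$ fixed, $\densa(z+se_k)=\indzi(\bar z_a+s\sqrt2\sin(k\pi a))$. This is either constant (when $\sin(k\pi a)=0$) or an indicator of a closed half-line. In both cases the set where it is $>0$ coincides, up to a point, with an open interval on which the density is locally bounded away from zero. Closability of the directional forms $\int |\partial_{e_k}F|^2\densa\,d\mubb$ follows by the classical one-dimensional Hamza criterion (Ma--R\"ockner, Ch.~II, \S2). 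Summing over $k$ gives closability of $\mathcal E^a$ on $\fcbinf$, which I extend to $\cbone$ by the usual approximation argument.

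For $\dens$, along the line $s\mapsto z+se_k$ the map $s\mapsto\dens(z+se_k)=\int_0^1\indzi(\bar z_x+s e_k(x))\,dx$ is a finite sum of monotone pieces. Its zero set is a closed set: $\dens(z+se_k)=0$ forces $\bar z_x+se_k(x)<0$ for a.e.\ $x$, hence for all interior $x$ by continuity, and this is an open condition in $s$ only up to the boundary behaviour. I expect this to be the main obstacle. The positivity set $\{s:\dens(z+se_k)>0\}$ must be shown to be open up to a Lebesgue null set, and $\dens$ must be locally bounded below there. I would first try lower semicontinuity: $s\mapsto\dens(z+se_k)$ is lower semicontinuous by Fatou applied to the open sets $\{\bar z_x+se_k(x)>0\}$, up to the null set where $\bar z_x+se_k(x)=0$ on positive $x$-measure. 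That null set has $ds$-measure zero for $\mubb$-a.e.\ $z$, since for $\mubb$-a.e.\ $z$ the level set $\{x:\bar z_x=0\}$ is Lebesgue-null. Lower semicontinuity gives both openness of $\{\dens>0\}$ and local positive lower bounds on compacts, which is exactly Hamza's condition. Alternatively, one can write $\dens\mubb=\int_0^1\rho^x\mubb\,dx$ and use that a superposition of closable forms with the same gradient is closable (sum/integral of closable forms), which avoids the lower semicontinuity issue entirely. I would use this as the primary argument.

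\textbf{Dirichlet property, locality, quasi-regularity.} The closures are Markovian because for normal contractions $\phi$ we have $D(\phi\circ F)=\phi'(F)DF$ on smooth cylinder functions. Locality (indeed the strong local property) follows from the gradient structure. For quasi-regularity I would invoke the standard result that gradient forms $\int (DF,DG)\,d m$ on a separable Hilbert space, for a finite measure $m$ and closable on $\fcbinf$, are quasi-regular (Ma--R\"ockner IV.4b, or \cite{RZZ12}, Theorem 3.2/Lemma 3.3). The compact nest is obtained from the tightness of $\mubb$ on $\ho$-balls compactly embedded in $\ltwo$, using the functions $z\mapsto\|z\|_{H_1}$-approximations.
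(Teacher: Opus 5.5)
Your proposal is correct in substance, but for $\dens$, and in how quasi-regularity is obtained, it takes a different route from the paper.

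\textbf{How the paper argues.} It treats both densities uniformly. It checks the ray Hamza condition along every $e_j$ (Lemma \ref{lem:L1+_RayHamza}) and full topological support (Lemma \ref{lem:supp_meas_dens_rho}). It then gets closability, the Markov and local property and quasi-regularity in one stroke from \cite{RZZ12}, Theorem 2.2, using $\mathbf{H}\subset\mathrm{QR}(\ltwo)$ from \cite{AR90}. For $\densa$, your line-by-line analysis with disintegration and summation over $k$ is exactly what that citation packages.

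\textbf{The case of $\dens$.} The ``main obstacle'' you anticipate is removed in the paper by an elementary decomposition. Off the finite set $\{e_j=0\}$ one has $s\mapsto\dens(z+se_j)=f_+(s)+f_-(s)$, with $f_\pm(s)=dx(\{\bar z+se_j\geq 0\}\cap\{\pm e_j>0\})$; here $f_+$ is nondecreasing and $f_-$ is nonincreasing. Monotone functions have the Hamza property, and the property is stable under sums because $1/(h_1+h_2)\leq\min\{1/h_1,1/h_2\}$. This holds for every $z\in\coec$.

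Your lower-semicontinuity variant also works, but for a different reason than the one you give. The sets $\{x : \bar z_x+se_k(x)=0,\ e_k(x)\neq 0\}$ are pairwise disjoint in $s$, so only countably many $s$ give positive measure, and this holds for every $z$. Null level sets of $\bar z$ for a.e.\ $z$ are not the point.

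Your primary argument, the superposition $\dens\mubb=\int_0^1\rho^x\mubb\,dx$, is a genuinely different and valid route. It reduces $\dens$ to the single-point case and applies to any mixture of such densities. However, the component forms are closable on different spaces $L^2(\ltwo,\rho^x\mubb)$. So ``a sum of closable forms on a common $L^2$ space is closable'' does not apply verbatim; you need the superposition argument:
\begin{itemize}
\item pass to a subsequence that is $\mathcal{E}^x$-Cauchy and $L^2(\rho^x\mubb)$-null for a.e.\ $x$;
\item use closability of each $\mathcal{E}^x$;
\item apply Fatou in $x$.
\end{itemize}
The paper's route gives $\dens\in\mathbf{H}$ directly. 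Yours gives only closability, i.e.\ $\dens\in\mathrm{QR}(\ltwo)$, which is all that \cite{RZZ12}, Theorem 2.2 and the later results need.

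\textbf{Two inaccuracies in your last step.} First, the relevant result in \cite{RZZ12} is Theorem 2.2. Theorem 3.2 there is the Skorokhod decomposition used in Corollary \ref{cor:ProcDecomp}.

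Second, and more importantly, your sketch of the compact nest cannot work. $\ho$ is the Cameron--Martin space of $\mubb$, so $\mubb(\ho)=0$. Hence $\ho$-balls carry no mass and cannot form an $\mathcal{E}$-nest, and $z\mapsto\|z\|_{H_1}$ is $+\infty$ $\mubb$-a.e. The standard quasi-regularity proof for gradient forms with a finite measure on a separable Hilbert or Banach space (\cite{MR92}, Chapter IV; R\"ockner--Schmuland) uses no compact embedding. It builds the nest from totally bounded sets via $1$-Lipschitz distance functions to a countable dense set. Citing that result is enough, and your route can then skip the full-support check of Lemma \ref{lem:supp_meas_dens_rho}, which the paper performs in order to apply \cite{RZZ12}, Theorem 2.2 on all of $\ltwo$.
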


In order to prove Theorem \ref{thm:df_closable_qreg}, we make use of \cite{RZZ12}, Theorem 2.2. In the following, we check the assumptions of the aforementioned theorem.

\begin{remark}(Ray Hamza Condition)\label{rem:RayHamza}
	We say that a non-negative measurable function $h:\R \to \R$ possesses the \emph{Hamza property} iff
	\begin{equation*}
		h\rvert_{\R\setminus R(h)} = 0 \text{ $ds$-a.s., where } R(h) \coloneqq \Big\{x\in\R \, \Big| \, \exists \, \varepsilon > 0: \int_{B_{\varepsilon}(x)} \frac{1}{h(s)} \, ds < \infty \Big\}.
	\end{equation*}
	A function $\gamma \in L_+^1(\ltwo,\mubb)$ is said to satisfy the \emph{ray Hamza condition in direction $l\in\ltwo$} (written as $\gamma\in\textbf{H}_l$) iff there exists a non-negative function $\tilde{\gamma}_l$ s.th.\ $\tilde{\gamma}_l = \gamma$ $\mubb$-a.e.\ and $\R \incl s \mapsto \tilde{\gamma}_l(z+sl)\in\R$ possesses the Hamza property for all $z\in\ltwo$. We define $\textbf{H}\coloneqq \bigcap_j \textbf{H}_{e_j}$, where $(e_j)$ is as in Subsection \ref{sect:gelf_triple_bv}, and say that $\gamma \in L_+^1(\ltwo,\mubb)$ satisfies the \emph{ray Hamza condition} iff $\gamma \in \textbf{H}$. It holds $\textbf{H} \subset \mathrm{QR}(\ltwo)$, see \cite{AR90}.
\end{remark}

\begin{lemma}\label{lem:L1+_RayHamza}
	It holds $\densa,\, \dens \in L_+^1(\ltwo, \mubb)$ and $\densa, \dens$ satisfy the ray Hamza condition.
\end{lemma}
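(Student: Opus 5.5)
The plan is to prove the two claims separately: first measurability and integrability, which is routine, then the ray Hamza condition. For the ray Hamza condition the idea is to choose suitable versions $\tilde\gamma$ that are lower semicontinuous along every ray $s\mapsto z+se_j$.

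\emph{Integrability.} By Remark \ref{rem:embeddings_and_measurability}, $\coec\in\borel(\ltwo)$ and $\iota\rvert_{\coec}$ is a Borel isomorphism. Hence it suffices to check measurability of the corresponding maps on $\co$. The map $\co\ni f\mapsto f_a$ is continuous for $\|\cdot\|_\infty$, so $\densa$ is Borel measurable. For $\dens$, the map $[0,1]\times\co\ni(x,f)\mapsto f_x$ is jointly continuous, so $(x,z)\mapsto\indzi(\bar z_x)$ is jointly measurable. Fubini then gives measurability of $\dens$. Since $0\le\densa,\dens\le 1$ and $\mubb$ is a probability measure, both densities lie in $L^1_+(\ltwo,\mubb)$.

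\emph{Choice of versions.} Define $\tilde\densa(z)\coloneqq\ind_{(0,\infty)}(\bar z_a)$ and $\tilde\dens(z)\coloneqq\int_0^1\ind_{(0,\infty)}(\bar z_x)\,dx$ for $z\in\coec$, and extend both by zero to $\ltwo$. These are $\mubb$-a.e.\ equal to $\densa$ and $\dens$. For $\densa$ this holds because $\mubb(\bar z_a=0)=\mu(\bb_a=0)=0$, as $\bb_a\sim\mathcal N(0,a(1-a))$ is non-degenerate. For $\dens$, Fubini gives
\[
\int_\ltwo \lambda(\{x:\bar z_x=0\})\,d\mubb(z)=\int_0^1\mu(\bb_x=0)\,dx=0,
\]
so $\tilde\dens=\dens$ $\mubb$-a.e.

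\emph{Hamza property along rays.} Fix $j\in\N$ and $z\in\ltwo$, and set $h(s)\coloneqq\tilde\gamma(z+se_j)$ with $\tilde\gamma\in\{\tilde\densa,\tilde\dens\}$.

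If $z\notin\coec$, then $z+se_j\notin\coec$ for every $s$, because $e_j$ has a representative in $\co$ and $\coec$ is a linear space. Hence $h\equiv0$, and the Hamza property holds trivially.

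If $z\in\coec$, the key observation is that $h$ is lower semicontinuous.
\begin{itemize}
\item For $\tilde\densa$, $h(s)=\ind_{(0,\infty)}(\bar z_a+s\bar e_j(a))$ is the indicator of an open set.
\item For $\tilde\dens$, $h(s)=\int_0^1\ind_{(0,\infty)}(\bar z_x+s\bar e_j(x))\,dx$. For each $x$, the integrand is lower semicontinuous in $s$, being the indicator of an open set. Fatou's lemma then gives $\liminf_{s\to s_0}h(s)\ge h(s_0)$.
\end{itemize}

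For a non-negative lower semicontinuous $h$, the set $\{h>0\}$ is open. Around every $s_0$ with $h(s_0)>0$ there is a ball on which $h>h(s_0)/2$, so $1/h$ is integrable there. Thus $\{h>0\}\subset R(h)$, i.e.\ $h=0$ on $\R\setminus R(h)$ everywhere, which is the Hamza property.

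This holds for all $z\in\ltwo$ and all $j$, so $\densa,\dens\in\textbf H$.

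The main obstacle is the $\dens$ case. With the original version $\indzi$, the function $s\mapsto\dens(z+se_j)$ can be positive yet decay to zero in a way that is neither monotone nor controlled, so the Hamza property is not obvious. Passing to the strict-inequality version to gain lower semicontinuity is the essential trick. It costs only the verification that level sets $\{\bar z_x=0\}$ have $dx\otimes\mubb$-measure zero, which is handled by the Fubini argument above.
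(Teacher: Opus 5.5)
Your proof is correct, and for the integrability part it matches the paper's. For the ray Hamza condition you take a different route.

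The paper works directly with $\densa$ and $\dens$ as defined via $\indzi$, with no change of version. For $z\in\coec$, the map $s\mapsto\densa(z+se_j)$ is $\ind_{[r,\infty)}$, $\ind_{(-\infty,r]}$ or a constant, hence monotone. The map $s\mapsto\dens(z+se_j)$ is written as $f_+(s)+f_-(s)$, where $f_\pm(s)=dx(\{z+se_j\geq 0\}\cap\{\pm e_j>0\})$; this uses that the zero set of $e_j$ is finite. Since $\bar z_x+s\,e_j(x)$ is monotone in $s$ for each fixed $x$, $f_+$ is non-decreasing and $f_-$ is non-increasing. Monotone functions have the Hamza property, and so do sums of non-negative Hamza functions, because $R(f_++f_-)\supseteq R(f_+)\cup R(f_-)$.

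So your claim that with $\indzi$ the Hamza property ``is not obvious'' overstates the difficulty. The ray function is not monotone, but it is a sum of two monotone functions, which suffices. Passing to the strict-inequality version is therefore not ``the essential trick''.

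Your route replaces the densities by lower semicontinuous versions built from $\ind_{(0,\infty)}$. This costs an extra null-set verification: Fubini together with the non-degeneracy of $\bb_x$, plus the (routine) measurability of the new versions. In return you get a general criterion: a non-negative lower semicontinuous $h$ satisfies $\{h>0\}\subseteq R(h)$ everywhere. This criterion does not depend on the ray functions splitting into monotone pieces. It would still work, for instance, for $\int_0^1\ind_U(\bar z_x)\,dx$ with $U$ an arbitrary open set, where the paper's monotone decomposition fails.

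The paper's argument is shorter and certifies the Hamza property for exactly the versions of $\densa$ and $\dens$ used in the rest of the paper.
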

\begin{proof}
	It holds $\densa \in L_+^1(\ltwo, \mubb)$, since the point evaluation $\co \incl \psi \mapsto \psi_a \in \R$ is $\borel(\co)$-measurable and $0\leq \densa \leq 1$. Further, $\dens$ is clearly well-defined. The evaluation map $[0,1]\times\co \incl (x,z) \mapsto z_x \in \R$ is continuous, hence $[0,1]\times\co \incl (x,z) \mapsto \indzi(z_x) \in \R$ is measurable and therefore Tonelli's theorem implies that $\co\incl z \mapsto \dens(z)\in\R$ is measurable, which yields that $\dens$ is measurable. Since $0\leq \dens \leq 1$, it holds $\dens \in L_+^1(\ltwo, \mubb)$.
	
	For verifying the ray Hamza condition, let $j\in\N$ and consider that $e_j=\sqrt{2}\sin(j\pi \cdot) \in \coec$. If $z\in\coec^c$, then $z+\R e_j \subset \coec^c$ and hence both maps $s \mapsto \rho^a(z+se_j)$ and $s \mapsto \rho(z+se_j)$ are zero on $\R$ and thus satisfy the Hamza condition. On the contrary, if $z\in\coec$, then $z+\R e_j \subset \coec$ and the following holds: The map
	\begin{equation*}
		\R \incl s \mapsto \rho^a(z+se_j) = \indzi(\bar{z}_a + se_j(a))= \begin{cases}
			\ind_{[r, \infty)}(s), &e_j(a)>0,\\
			\ind_{(-\infty,r]}(s), &e_j(a)<0,\\
			\indzi(\bar{z}_a), &e_j(a)=0,
		\end{cases}
	\end{equation*}
	where $r=-\bar{z}_a/e_j(a)$, satisfies the Hamza condition, since this map is monotone in either case. Further, observe that the map $s \mapsto \rho(z+se_j)$ is the sum of the two maps
	\begin{equation*}
		\R\incl s \mapsto f_{\pm}(s) \coloneqq dx\big(\{ z+se_j \geq 0 \} \cap \{\pm e_j > 0\}\big) \in \R.
	\end{equation*}
	Note that $f_+$ and $f_-$ are monotone and therefore satisfy the Hamza condition. Consequently, their sum also satisfies the Hamza condition. This finishes the proof.
\end{proof}

\begin{lemma}\label{lem:supp_meas_dens_rho}
	The measures $\densa \mubb,\, \rho \mubb$ have full topological support.
\end{lemma}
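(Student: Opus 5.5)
To show that $\densa\mubb$ and $\dens\mubb$ have full topological support, I will prove that every open ball $B_r(z_0)\subset\ltwo$ with $r>0$ has positive measure. I reduce everything to the single fact, already noted in Subsection~\ref{sect:gelf_triple_bv}, that $\mubb$ is non-degenerate and has full topological support.

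The case of $\densa$ goes as follows. Since $\coec$ is dense in $\ltwo$, I first pick $w\in\coec$ with $\|w-z_0\|_{\ltwo}<r/3$. I then modify $w$ near $a$: choose a continuous bump $\phi\in\co$ with $\phi\ge 0$, $\bar\phi_a=1$ and $\|\phi\|_{\ltwo}$ small. Setting $v\coloneqq w+c\phi$ with $c\coloneqq|\bar w_a|+1$ gives $\bar v_a\ge 1$, and shrinking the support of $\phi$ ensures $\|v-z_0\|_{\ltwo}<2r/3$.

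The key step is to find a set of positive $\mubb$-measure near $v$ on which $\densa=1$. Consider
\[
U\coloneqq\{z\in\coec:\ \|\bar z-\bar v\|_\infty<\delta\},
\]
with $\delta<\min(1,r/3)$. On $U$ we have $\bar z_a>0$, hence $\densa=1$, and also $\|z-z_0\|_{\ltwo}<r$. So it suffices to show $\mubb(U)>0$, i.e.\ that the Brownian bridge lies in a uniform tube around the arbitrary continuous path $\bar v$ (which vanishes at $0$ and $1$) with positive probability. This is the classical support theorem for the Brownian bridge. Concretely, I would approximate $\bar v$ uniformly within $\delta/2$ by a Cameron--Martin path $k\in\ho$, apply the Cameron--Martin formula (Proposition~\ref{prop:cmf_wna}, transported through $\bb$) to shift the small ball $\{\|\bb\|_\infty<\delta/2\}$ to the ball around $k$, and use that the small ball has positive probability, e.g.\ by the explicit distribution of $\sup|\bb|$ or by Anderson-type Gaussian arguments. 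Then $\densa\mubb(B_r(z_0))\ge\mubb(U)>0$.

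For $\dens$, the same set $U$ works: for $z\in U$ the inequality $\bar z_x>0$ holds on a neighbourhood of $a$ by continuity, so $\dens(z)>0$ on $U$. Since $\mubb(U)>0$, the integral $\int_U\dens\,d\mubb$ is positive.

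I expect the main obstacle to be justifying $\mubb(U)>0$ rigorously. I would either cite the standard support theorem for Gaussian measures on $\co$ (the support of the Brownian bridge law is all of $\co$) or carry out the Cameron--Martin shift argument sketched above. A secondary technicality is measurability of $U$ in $\borel(\ltwo)$, which follows from Remark~\ref{rem:embeddings_and_measurability}.
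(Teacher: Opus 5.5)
Your proof is correct and follows essentially the same route as the paper: approximate by a continuous path vanishing at the endpoints, lift it near $a$, take a sup-norm tube on which $\bar z_a>0$, and invoke full topological support of the Brownian bridge law on $\co$. The only difference is that the paper makes its approximant $\ge 1$ on a whole interval $B_\delta(a)$ to get the uniform bound $\dens\ge 2\delta$ on the tube, whereas you only use that $\dens>0$ pointwise on the tube; this is equally sufficient.
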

\begin{proof}
	Let $f\in\ltwo$ and $\varepsilon>0$ arbitrary but fixed. Since $[\Czo]$ is dense in $\ltwo$, we can choose some $\psi\in [\Czo]$ s.th.\ $\psi\in B_{\varepsilon/3}^{\ltwo}(f)$. By a ``small modification'' of $\psi$ around $x \in \{0,\, a,\, 1\}$, it is technical but straightforward to construct a function $\varphi\in \coec \cap B_{\varepsilon/3}^{\ltwo}(\psi)$ s.th.\ $\varphi\rvert_{B_{\delta}(a)} \geq 1$ for some $\delta>0$ small enough. 
	This implies that $B_{\sim}\coloneqq\iota\big(B_{((\varepsilon/3) \wedge 1)}^{\co}(\bar{\varphi})\big) \subseteq B_{\varepsilon/3}^{\ltwo}(\varphi) \subseteq B_{\varepsilon}^{\ltwo}(f)$, which yields that
	\begin{equation*}
		\densa \mubb(B_{\varepsilon}^{\ltwo}(f))
		\geq \densa \mubb\left(B_{\sim}\right) = \mubb\left(B_{\sim}\right) > 0,
	\end{equation*}
	where the equality is due to $\xi\in B_{\sim} \Rightarrow \bar{\xi}_a>0$, and the last estimate holds, since the law of the Brownian bridge on $\co$ has full topological support.
	
	Further, we can use this construction to estimate
	\begin{equation*}
		\rho\mubb(B_{\varepsilon}^{\ltwo}(f))
		\geq \rho\mubb\left(B_{\sim}\right) \geq 2\delta \mubb\left(B_{\sim}\right) > 0,
	\end{equation*}
	where the second estimate follows due to $\xi \in B_{\sim} \Rightarrow \bar{\xi}\rvert_{B_{\delta}(a)} > 0 \Rightarrow \rho(\xi)\geq 2\delta$. This finalizes the proof.
\end{proof}

\begin{proof}[Proof of Theorem \ref{thm:df_closable_qreg}]
	Due to Lemma \ref{lem:L1+_RayHamza} and Lemma \ref{lem:supp_meas_dens_rho}, Theorem \ref{thm:df_closable_qreg} follows by an application of \cite{RZZ12}, Theorem 2.2.
\end{proof}

Now, by Theorem \ref{thm:df_closable_qreg} and the theory presented in \cite{MR92}, there exists a diffusion process $M = (\Omega, \mathcal{M}, \{\mathcal{M}_t\}, \theta_t, X_t, P_z)$ on $\ltwo$ associated with the Dirichlet form $(\mathcal{E}, \mathcal{F})$, called a distorted Ornstein--Uhlenbeck process on $\ltwo$. For $(\mathcal{E}^a, \mathcal{F}^a)$, we denote the corresponding process by $M^a = (\Omega^a, \mathcal{M}^a, \{\mathcal{M}_t^a\}, \theta_t^a, X^a_t, P^a_z)$.

\section{The Associated Integration by Parts Formulas}
\subsubsection{A Version of the Cameron--Martin Formula}
In the following, let $T_{+z_0} : \ltwo \to \ltwo : z \mapsto z_0+z$ denote the continuous and therefore measurable translation in $\ltwo$ by $z_0 \in \ltwo$. We further introduce the operator $K: \ltwo \to \ltwo$ by $(K\varphi)_x \coloneqq (q_{\cdot}(x), \varphi)_{\ltwo}$ for all $x \in \unitint,\, \varphi \in \ltwo$, as discussed in \cite{GV18}, Subsection 4. Note that $K$ is linear and bounded, and its adjoint $K^*$ is given by $(K^* \psi)_y \coloneqq (q_y(\cdot), \psi)_{\ltwo}$ for all $x \in \unitint,\, \psi \in \ltwo$. These operators are useful for the upcoming proofs, since for all $\varphi \in C[0,1]$ it holds
\begin{equation}\label{eqn:L2WNA_translation_formula}
	(\varphi, \bb(\omega))_{\ltwo} = \langle K\varphi, \omega \rangle \quad \text{for $\mu$-a.e.\ $\omega \in \SchwartzDistribs$,}
\end{equation}
where we extended $K\varphi$ to $\R$ by zero, see \cite{GV18}, p.\ 352. Further, note that the fundamental theorem of calculus for Sobolev spaces yields
\begin{equation}\label{eqn:Kadj_on_DA}
	K^*h' = h \quad \text{for all } h\in \domL.
\end{equation}
Moreover, utilizing the fundamental theorem of calculus and the IbPF, it holds $Kh''~=~-h'$ for all $h\in \ho \cap \Cinfzo$.

\begin{lemma}
	Let $h\in\domL$. Then it holds
	\begin{equation}\label{eqn:wnspace_ltwo_translation}
		-(h'', \bb(\omega))_{\ltwo}=\langle h', \omega\rangle \quad\text{for $\mu$-a.e. $\omega\in\SchwartzFcts$},
	\end{equation}
	where we extended $h'$ to $\R$ by zero to ensure well-definedness of the dual pairing.
\end{lemma}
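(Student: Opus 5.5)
The plan is to reduce the claim to the translation formula \eqref{eqn:L2WNA_translation_formula} and then identify $-Kh''$ with $h'$ (extended by zero).

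\emph{Step 1: the case of smooth $h$.} If $h\in\domL\cap C^{2}[0,1]$, then $h''\in C[0,1]$. Formula \eqref{eqn:L2WNA_translation_formula} gives
\begin{equation*}
	(h'',\bb(\omega))_{\ltwo}=\langle Kh'',\omega\rangle \quad\text{for $\mu$-a.e.\ $\omega$.}
\end{equation*}
It remains to show $Kh''=-h'$ as elements of $L^2(\R)$, with both sides extended by zero. By the definition of $q_y$,
\begin{equation*}
	(Kh'')_x=\int_0^1 q_y(x)\,h''_y\,dy=\int_0^1\big(\ind_{(x,1]}(y)-y\big)h''_y\,dy .
\end{equation*}
Using $h(0)=h(1)=0$, one computes
\begin{equation*}
	\int_x^1 h''\,dy=h'(1)-h'(x), \qquad \int_0^1 y\,h''_y\,dy=h'(1)-\big(h(1)-h(0)\big)=h'(1).
\end{equation*}
Subtracting gives $(Kh'')_x=-h'(x)$. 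This is the identity the paper records just before the lemma; the computation is elementary.

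\emph{Step 2: general $h\in\domL$.} Here $h''$ is only in $\ltwo$, so \eqref{eqn:L2WNA_translation_formula} cannot be applied directly. I would proceed by density. Choose $h_n\in\domL\cap C^\infty[0,1]$ with $h_n\to h$ in $H^2$. For instance, approximate $h''$ in $\ltwo$ by smooth $g_n$ and set $h_n:=-2A^{-1}g_n$, i.e.\ $h_n=-2A^{-1}g_n=-4Qg_n$. Then $h_n''=g_n$, $h_n$ satisfies the Dirichlet conditions, and $h_n'\to h'$ in $\ltwo$, since $A^{-1}$ maps $\ltwo$ continuously into $H^2$.

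Both sides of \eqref{eqn:wnspace_ltwo_translation} are continuous as maps into $L^2(\mu)$:
\begin{itemize}
	\item For the right side, the isometry \eqref{eqn:wna_isometry} gives $\|\langle h_n'-h',\cdot\rangle\|_{L^2(\mu)}=\|h_n'-h'\|_{L^2}\to0$.
	\item For the left side, $\E_\mu\big[(h_n''-h'',\bb)_{\ltwo}^2\big]=(Q(h_n''-h''),h_n''-h'')_{\ltwo}\le\|Q\|\,\|h_n''-h''\|_{\ltwo}^2\to0$, using that $\bb(\mu)=\mubb$ has covariance operator $Q$.
\end{itemize}
Hence the identity, valid $\mu$-a.e.\ for each $h_n$, passes to the $L^2(\mu)$-limit, and so holds $\mu$-a.e.\ for $h$.

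\emph{Main obstacle.} The delicate point is justifying \eqref{eqn:L2WNA_translation_formula}, which is quoted from \cite{GV18} only for continuous $\varphi$, together with the measurability and representation issues in the limit. Taking $g_n$ continuous keeps us within its scope, and the $L^2(\mu)$ continuity argument then handles the rest.

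Alternatively, \eqref{eqn:L2WNA_translation_formula} extends directly to all $\varphi\in\ltwo$ by the same isometry argument: $\varphi\mapsto(\varphi,\bb)_{\ltwo}$ and $\varphi\mapsto\langle K\varphi,\cdot\rangle$ are both continuous from $\ltwo$ into $L^2(\mu)$, because $K$ is bounded. Combined with $Kh''=-h'$, which extends from smooth $h$ to $h\in\domL$ by continuity of $K$, this gives the statement immediately.
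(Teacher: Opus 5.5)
Your proposal is correct and takes essentially the paper's route. You prove the identity for smooth $h$ via $Kh''=-h'$ and \eqref{eqn:L2WNA_translation_formula}, then pass to general $h\in\domL$ by $H^2$-approximation, using the isometry \eqref{eqn:wna_isometry} on the right-hand side. The paper takes the approximating sequence from Sobolev density, whereas you construct it with the Green operator. It handles the left-hand side pointwise for every $\omega$ via Cauchy--Schwarz (after passing to an a.e.-convergent subsequence on the right), whereas you use the covariance $Q$ in $L^2(\mu)$; both work.

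One slip needs fixing. Since $(Qg)''=-g$ and $A^{-1}=2Q$, your choice $h_n=-2A^{-1}g_n=-4Qg_n$ satisfies $h_n''=4g_n$, not $g_n$. Take instead $h_n=-Qg_n=-\tfrac12A^{-1}g_n$. Then $h_n-h=-Q(g_n-h'')\to 0$ in $H^2$, because $h=-Qh''$ for every $h\in\domL$ (uniqueness for the Dirichlet problem).
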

\begin{proof}
	Let $h\in \ho \cap \Cinfzo$, which implies that $Kh''~=~-h'$, $h'' \in \Czo$ and hence $-(h'', \bb(\omega))_{\ltwo}=\langle h', \omega\rangle$ for $\mu$-a.e. $\omega\in\SchwartzFcts$. In order to prove this fact for all $h\in \ho \cap H^2$, we consider that $h\in \ho \cap \Cinfzo$ is dense in $\ho \cap H^2$ w.r.t.\ the $H^{2}$-norm, which is a consequence of the results in [Adams], Section 6. This provides that for any $h\in \ho \cap H^{2}$ we can find a sequence $(h_n) \subset \ho \cap \Cinfzo$ s.th.\ $h_n \to h$ in $H^2$ for $n \to \infty$, in particular $h_n' \to h'$ and $h_n'' \to h''$ in $\ltwo$ as $n\to\infty$. Interpreting these functions as elements in $L^2(\R)$ by extension to $\R$ by zero, the isometry \eqref{eqn:wna_isometry} yields that $\langle h_n', \cdot \rangle \to \langle h', \cdot \rangle$ in $L^2(\mu)$, and due to the Riesz--Fischer theorem (and dropping to representatives in $\mathcal{L}^2(\mu)$), there exists a subsequence $(h_{n_k})_k$ s.th.\ $\langle h_{n_k}', \omega \rangle \to \langle h', \omega \rangle$ for $\mu$-almost all $\omega \in \SchwartzFcts$. Additionally, it holds $-(h_{n_k}'', \bb(\omega))_{\ltwo} \to -(h'', \bb(\omega))_{\ltwo}$ for all $\omega \in \SchwartzFcts$, which yields the claim.
\end{proof}

\begin{lemma}\label{lem:CMF}
	(Version of the Cameron--Martin Formula) Let $h \in \domL$. Then, the measure $T_{+h}(\mubb)$ is absolutely continuous w.r.t.\ $\mubb$, and its Radon--Nikodym density is given by
	\begin{equation}\label{lem:CMF_formula}
		\frac{dT_{+h}(\mubb)}{d\mubb} = \exp\left( -(h'', \cdot )_{\ltwo} - \frac{1}{2} \| h' \|_{\ltwo}^2 \right) \eqqcolon \xi.
	\end{equation}
\end{lemma}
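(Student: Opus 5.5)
Since $\mubb=\bb(\mu)$, the plan is to lift the translation to white noise space and use Proposition \ref{prop:cmf_wna}. Extend $h'$ to $\R$ by zero, so $h'\in L^2(\R)$. First I show that
\[
\bb\circ T_{+h'}^{\mathcal{S}'} = T_{+h}\circ \bb \quad \mu\text{-a.e.}
\]
Pointwise in $x$, $\bb_x(\omega+h')=\langle q_x,\omega\rangle+(q_x,h')_{L^2(\R)}$. By \eqref{eqn:Kadj_on_DA}, $(q_x,h')=(K^*h')_x=h_x$; one can also check directly that $\int_0^x h'\,ds - x\int_0^1 h'\,ds=\bar h_x$, using $\bar h_0=\bar h_1=0$. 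This gives a pointwise a.e.\ identity for each fixed $x$. To pass to the path-valued identity in $\ltwo$, I would test against $\varphi\in C[0,1]$. By \eqref{eqn:L2WNA_translation_formula},
\[
(\varphi,\bb(\omega+h'))_{\ltwo}=\langle K\varphi,\omega\rangle+(K\varphi,h')=(\varphi,\bb(\omega))_{\ltwo}+(\varphi,K^*h')_{\ltwo}=(\varphi,\bb(\omega)+h)_{\ltwo}.
\]
Here the null set must be handled: \eqref{eqn:L2WNA_translation_formula} holds only $\mu$-a.e., and it must be applied at the shifted point $\omega+h'$. I handle this by noting that $T^{\mathcal{S}'}_{+h'}(\mu)\ll\mu$ (Proposition \ref{prop:cmf_wna}), so $\mu$-null sets pull back to $\mu$-null sets. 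Taking a countable dense family of $\varphi$ then yields the identity in $\ltwo$ for $\mu$-a.e.\ $\omega$.

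Next, for bounded measurable $G:\ltwo\to\R$, I compute
\[
\int G\, dT_{+h}(\mubb)=\int G(\bb(\omega)+h)\,d\mu=\int G(\bb(\omega+h'))\,d\mu=\int G(\bb(\omega))\exp\bigl(\langle h',\omega\rangle-\tfrac12\|h'\|^2\bigr)\,d\mu(\omega).
\]
The last equality is Proposition \ref{prop:cmf_wna}. The exponent there should read $\|f\|^2$; a Gaussian check confirms the squared norm, consistent with \eqref{eqn:wna_int_exp}. Then the preceding lemma, \eqref{eqn:wnspace_ltwo_translation}, replaces $\langle h',\omega\rangle$ by $-(h'',\bb(\omega))_{\ltwo}$ $\mu$-a.e. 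The integrand thus becomes $G(\bb)\,\xi(\bb)$, which equals $\int G\,\xi\,d\mubb$ by the image-measure formula. This proves both absolute continuity and the stated density.

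The main obstacle is the a.e.\ bookkeeping in the first step. The identities hold only off $\mu$-null sets, which depend on $x$ or $\varphi$ and are evaluated at translated points. I resolve this using path continuity of $\bb$, countably many test functions, and quasi-invariance of $\mu$ under $L^2(\R)$-shifts. Measurability of $\xi$ is clear, since $z\mapsto(h'',z)_{\ltwo}$ is continuous on $\ltwo$.
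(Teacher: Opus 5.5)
Your proof is correct, but it is organized differently from the paper's. The paper never proves the path-level identity $\bb(\omega+h')=\bb(\omega)+h$. Instead it fixes $\eta\in\Czo$ and rewrites $(\eta,\bb(\omega)+h)_{\ltwo}$ as $\langle K\eta,\omega+h'\rangle$ via \eqref{eqn:L2WNA_translation_formula} and \eqref{eqn:Kadj_on_DA}. It then applies Proposition \ref{prop:cmf_wna} to the single test function $\exp(i\langle K\eta,\cdot\rangle)$ and converts back with \eqref{eqn:wnspace_ltwo_translation}. It concludes $T_{+h}(\mubb)=\xi\mubb$ because probability measures on $\ltwo$ are determined by their characteristic functionals on the point-separating family $\Czo$, citing Vakhania--Tarieladze--Chobanyan. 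For that step it must check separately that $\xi\mubb$ is a probability measure.

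Your route instead establishes the intertwining relation $\bb\circ T^{\mathcal{S}'}_{+h'}=T_{+h}\circ\bb$ $\mu$-a.e. You control the null sets at shifted points by quasi-invariance of $\mu$ and countably many test functions. You then push Proposition \ref{prop:cmf_wna} forward for arbitrary bounded measurable $G$. This makes the uniqueness theorem unnecessary and gives the normalization $\int\xi\,d\mubb=1$ for free by taking $G\equiv 1$. You also obtain the intertwining identity as a by-product. The price is the null-set bookkeeping you identify, which the paper avoids by working with one linear functional at a time. Your correction of the exponent in Proposition \ref{prop:cmf_wna} to $\frac12\|f\|_{L^2(\R)}^2$ is right, as \eqref{eqn:wna_int_exp} confirms.
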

\begin{proof}
	First note that two probability measures on $(\ltwo,\borel(\ltwo))$ are equal if their respective characteristic functionals agree on $\Czo$. This follows from \cite{VTC87}, p.\ 201, Corollary 1(b), using that $\ltwo$ is Polish and hence Radon, the fact that all probability measures on Radon spaces are Radon, and that $\Czo$ separates points of $\ltwo$, i.e.\ for all $f$, $g\in\ltwo$ with $f\neq g$ there exists a $\eta\in \Czo$ s.th.\ $(\eta, f)_{\ltwo}\neq (\eta,g)_{\ltwo}$. The latter is due to the Hahn--Banach theorem, the Riesz representation theorem and density of $\Czo$ in $\ltwo$. It remains to verify this condition.
	
	Clearly $T_{+h}(\mubb)$ is a probability measure on $(\ltwo,\borel(\ltwo))$. Further, $\xi \mubb$ is a probability measure on $(\ltwo,\borel(\ltwo))$ as well, since $\xi \in L_+^1(\ltwo, \mubb)$ and $\|\xi\|_{L^1}=1$, where the latter follows from a straightforward calculation, using \eqref{eqn:wnspace_ltwo_translation} and \eqref{eqn:wna_int_exp}. Moreover, let $\eta \in \Czo$, then
	\begin{align*}
		\widehat{T_{+h}(\mubb)}\left( \eta \right)
		&= \int_{\ltwo} \exp\left(i (\eta, z)_{\ltwo} \right) dT_{+h}(\mubb)(z)\\
		&= \int_{\SchwartzDistribs} \exp\left( i (\eta, \bb(\omega)+h)_{\ltwo} \right) d\mu(\omega)\\
		&= \int_{\SchwartzDistribs} \exp \left( i \langle K\eta, \omega + h' \rangle \right) d\mu(\omega)\\
		&= \int_{\SchwartzDistribs} \exp \left( i \langle K\eta, \omega \rangle \right) \exp \left( \langle h', \omega \rangle - \frac{1}{2} \| h' \|_{L^2(\R)}^2 \right) d\mu(\omega),
	\end{align*}
	where the penultimate equality holds due to \eqref{eqn:L2WNA_translation_formula} and \eqref{eqn:Kadj_on_DA}, where we extend $K\eta$ by zero, such that we can interpret $K\eta \in L^2(\R)$, and the last equality follows by applying the Cameron--Martin formula from white noise analysis, see Proposition \ref{prop:cmf_wna}. Therefore, note that $K\eta$ can also get interpreted as a restriction of an element in $\SchwartzFcts$, see \cite{GV18}, p.\ 356. We finally conclude that
	\begin{align*}
		\widehat{T_{+h}(\mubb)}\left( \eta \right)
		&= \int_{\SchwartzDistribs} \exp \left( i (\eta, \bb(\omega))_{\ltwo} \right) \exp \left( -(h'', \bb(\omega))_{\ltwo} - \frac{1}{2} \| h' \|_{\ltwo}^2 \right) d\mu(\omega)\\
		&= \widehat{\xi\mubb}\left( \eta \right),
	\end{align*}
	where the first equation holds due to \eqref{eqn:L2WNA_translation_formula} and \eqref{eqn:wnspace_ltwo_translation}. This finishes the proof.
\end{proof}

\subsubsection{Derivation of the Integration by Parts Formulas}

Since we first proof smoothed versions of the targeted IbPFs, we define a smoothed version of the considered indicator function via a Gaussian mollifier for any $\varepsilon>0$ by
\begin{equation*}
	\inds \coloneqq \gmoll \ast \indzi, \quad \text{where } \gmoll (x) \coloneqq \frac{1}{\sqrt{2\pi\varepsilon}} \exp \left(-\frac{x^2}{2\varepsilon}\right) \text{ for all } x\in\R.
\end{equation*}
Consequently, we define the densities $\densaeps,\, \denseps: \ltwo \to \R$ as in \eqref{def:densa} when exchanging $\indzi$ for $\inds$. It analogously holds $0\leq\densaeps,\, \denseps\leq 1$, and with the same arguments as in Lemma \ref{lem:L1+_RayHamza} we conclude that $\densaeps,\, \denseps \in L_+^1(\ltwo, \mubb)$.

\begin{lemma}\label{lem:smoothedIbPF}
	(Smoothed Versions of the IbPFs) For all $\F \in C_b^1(\ltwo)$ and $h \in \ho \cap H^2$ we have the IbPF
	\begin{align}\label{smoothedIbPF}
		\E_{\densaeps\mubb}\left[ \partial_h \F \right] + \E_{\densaeps\mubb}\left[ \F \, (h'',\cdot)_{\ltwo} \right]
		&= - \int_{\SchwartzDistribs} \F(\bb(\omega)) \, \bar{h}_a \gmoll(\ddexta(\bb(\omega))) \, d\mu(\omega)\\
		&= - \llangle \F(\bb), \bar{h}_a \gmoll(\bb_a) \rrangle,\nonumber
	\end{align}
	where $\llangle\cdot,\cdot\rrangle$ denotes the Hida dual pairing, see Subsection \ref{sect:wna}, which extends the canonical scalar product on $L^2(\mu)$. Further, the following IbPF holds:
	\begin{align}\label{smoothedIbPF_rho}
		\E_{\denseps\mubb}\left[ \partial_h \F \right]
		+ \E_{\denseps\mubb}\left[ \F \, (h'', \cdot)_{\ltwo} \right]
		&= - \int_{\SchwartzDistribs} \F(\bb(\omega)) \, \int_{\unitint} \bar{h}_x \gmoll(\ddext(\bb(\omega))) \,dx \, d\mu(\omega)\\
		&= - \bllangle \F(\bb), \int_{(0,1)} \bar{h}_x \gmoll(\bb_x) \, dx \brrangle. \nonumber
	\end{align}
\end{lemma}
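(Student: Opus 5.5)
The plan is to differentiate the Cameron--Martin formula, Lemma \ref{lem:CMF}, along the ray $s\mapsto sh$. Fix $\F\in C_b^1(\ltwo)$ and $h\in\domL$, and consider the smooth bounded functional $G\coloneqq \F\,\densaeps$; the case of $\denseps$ is handled identically. For $s\in\R$, Lemma \ref{lem:CMF} applied to $sh\in\domL$ gives
\begin{equation*}
	\int_{\ltwo} G(z+sh)\, d\mubb(z) = \int_{\ltwo} G(z)\exp\Big(-s(h'',z)_{\ltwo} - \frac{s^2}{2}\|h'\|_{\ltwo}^2\Big)\, d\mubb(z).
\end{equation*}
We differentiate both sides at $s=0$. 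On the right, differentiation under the integral is justified because $G$ is bounded and $(h'',\cdot)_{\ltwo}$ is Gaussian under $\mubb$, so its exponential moments are uniformly integrable for $|s|\le 1$. The resulting derivative is $-\E_{\mubb}[G\,(h'',\cdot)_{\ltwo}]$.

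On the left we need $\frac{d}{ds}G(z+sh)|_{s=0}$ for $\mubb$-a.e.\ $z$, together with a dominating bound. Since $\mubb(\coec)=1$ and $h\in\ho$ has continuous representative $\bar h$ with $\bar h_0=\bar h_1=0$, for $z\in\coec$ we have $z+sh\in\coec$ and $\densaeps(z+sh)=\inds(\bar z_a+s\bar h_a)$. Because $\inds$ is smooth with derivative $\gmoll$, this yields
\begin{equation*}
	\frac{d}{ds}G(z+sh)\Big|_{s=0}=\partial_h\F(z)\,\densaeps(z)+\F(z)\,\bar h_a\,\gmoll(\bar z_a).
\end{equation*}
For $\denseps$ we differentiate inside $\int_0^1\inds(\bar z_x+s\bar h_x)\,dx$. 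This is legitimate because $\|\bar h\|_\infty<\infty$ and $\|\gmoll\|_\infty\le(2\pi\varepsilon)^{-1/2}$, and the same bounds dominate the $s$-difference quotients uniformly in $z$, since $\F$ and $D\F$ are bounded. Dominated convergence then lets us interchange $d/ds$ with $\int d\mubb$. Equating the two derivatives and rearranging gives the left-hand side of \eqref{smoothedIbPF}, respectively \eqref{smoothedIbPF_rho}, equal to $-\int_{\ltwo}\F(z)\bar h_a\gmoll(\bar z_a)\,d\mubb(z)$, respectively $-\int_{\ltwo}\F(z)\int_0^1\bar h_x\gmoll(\bar z_x)\,dx\,d\mubb(z)$.

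The remaining step transports these integrals to white noise space via $\mubb=\bb(\mu)$. Since $\bb(\omega)\in\coec$ has continuous representative $(\bb_x(\omega))_x$, we have $\ddexta(\bb(\omega))=\bb_a(\omega)$, which gives the first equalities in the statement. For the Hida pairings, note that $\F(\bb)$ and $\bar h_a\gmoll(\bb_a)$ (resp.\ $\int_0^1\bar h_x\gmoll(\bb_x)\,dx$) are bounded, hence in $L^2(\mu)$. The pairing $\llangle\cdot,\cdot\rrangle$ restricts to the $L^2(\mu)$ scalar product, understood as the bilinear pairing of real functions, which gives the second equalities.

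The main technical point is the almost-sure chain rule along $h$ for the densities. These are only defined through $\co$-representatives and are extended by zero off $\coec$, so they are not Fréchet differentiable on $\ltwo$. This is resolved by noting that $\coec$ is invariant under $\ho$-translations and has full $\mubb$-measure, so all computations can be restricted to $\coec$.
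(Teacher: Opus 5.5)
Your proposal is correct and follows essentially the same route as the paper: both differentiate the Cameron--Martin identity of Lemma \ref{lem:CMF} along $s\mapsto sh$ at $s=0$, using Gaussian exponential-moment domination. The only cosmetic difference is that you shift the product $\F\densaeps$, whereas the paper shifts only $\F$ and differentiates $\densaeps(z-\lambda h)$ times the Radon--Nikodym density. For $\denseps$ you differentiate under $\int_0^1$ directly, whereas the paper integrates the pointwise-in-$x$ identity via Fubini. Your route is equally valid, provided you note that the $L^2(\mu)$-valued Bochner integral $\int_{(0,1)}\bar{h}_x\gmoll(\bb_x)\,dx$ coincides $\mu$-a.e.\ with the pathwise integral.
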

\begin{proof}
	A straightforward application of the differentiation lemma for integrals, followed by an application of the Cameron--Martin formula from Lemma \ref{lem:CMF}, yields
	\begin{align*}
		&\int_{\ltwo} \partial_h \F(z) \, d(\densaeps\mubb)(z)\\
		&= \int_{\ltwo} \frac{d}{d\lambda} \F(z+\lambda h) \bigg\rvert_{\lambda=0} \densaeps(z) \, d\mubb(z)\\
		&= \frac{d}{d\lambda} \left[ \int_{\ltwo} \F(z+\lambda h) \, \densaeps((z+\lambda h) -\lambda h) \,d\mubb(z)\right]\bigg\rvert_{\lambda=0}\\
		&= \frac{d}{d\lambda} \left[ \int_{\ltwo} \F(z) \, \densaeps(z-\lambda h)\exp \left( - \lambda (h'', z)_{\ltwo} - \frac{\lambda^2}{2} \| h' \|_{\ltwo}^2 \right) d\mubb\right]\bigg\rvert_{\lambda=0}.
	\end{align*}
	Wanting to apply the differentiation lemma again, we proceed by checking its assumptions: Observe that for any fixed $\lambda\in B_1(0)$, the map
	\begin{equation*}
		\ltwo \incl z \mapsto \F(z) \, \densaeps(z - \lambda h) \exp \left( - \lambda(h'', z)_{\ltwo} - \frac{\lambda^2}{2} \| h' \|_{\ltwo}^2 \right) \in\R
	\end{equation*}
	is in $\mathcal{L}^1(\ltwo,\mubb)$, since $\exp(-\lambda (h'',\cdot)_{\ltwo})\in\mathcal{L}^1(\ltwo,\mubb)$ due to \eqref{eqn:wnspace_ltwo_translation} and \eqref{eqn:wna_int_exp}, and since all other terms are bounded. Further, if $z\in\coec$, then $z+\lambda h\in\coec$ for all $\lambda\in B_1(0)$, since $h\in\coec$. In this case, we can calculate
	\begin{align}\label{eqn:ibp_deriv}
		&\frac{d}{d\lambda} \left[ \densaeps(z - \lambda h)\exp \left( - \lambda (h'',z)_{\ltwo} - \frac{\lambda^2}{2} \| h' \|_{\ltwo}^2 \right) \right]\nonumber\\
		&= - \bar{h}_a \gmoll(\bar{z}_a-\lambda \bar{h}_a) \exp \left( - \lambda(h'',z)_{\ltwo} - \frac{\lambda^2}{2} \| h' \|_{\ltwo}^2 \right)\nonumber\\
		&\quad+\densaeps(z - \lambda h)\left( -(h'',z)_{\ltwo} - \lambda \| h' \|_{\ltwo}^2 \right)\exp \left( - (h'',z)_{\ltwo} - \frac{\lambda^2}{2} \| h' \|_{\ltwo}^2 \right),
	\end{align}
	hence for $\lambda \in B_1(0)$ it holds
	\begin{align*}
		&\left|\frac{d}{d\lambda} \left[ \F(z) \, \densaeps(z - \lambda h)\exp \left( - \lambda(h'',z)_{\ltwo} - \frac{\lambda^2}{2} \| h' \|_{\ltwo}^2 \right) \right] \right|\\
		&\leq |\F(z)| |\bar{h}_a| |\gmoll(\bar{z}_a-\lambda \bar{h}_a)| \exp\Big(|\lambda| |(h'',z)_{\ltwo}|\Big)\exp\left( -\frac{\lambda^2}{2}\|h'\|_{\ltwo}^2 \right)\\
		&\quad + |\F(z)| |\densaeps(z - \lambda h)|  \Big(|(h'',\cdot)_{\ltwo}|+|\lambda|\|h'\|_{\ltwo}^2\Big) \exp\Big(|(h'',z)_{\ltwo}|\Big)\exp\Big( -\frac{\lambda^2}{2}\|h'\|_{\ltwo}^2 \Big)\\
		&\leq \|\F\|_{\infty}\left(\frac{1}{\sqrt{2\pi\varepsilon}}\| h \|_{\infty} + |(h'', z)_{\ltwo} | + \| h' \|_{\ltwo}^2 \right) \exp \Big( |(h'', z)_{\ltwo} | \Big)\\
		&\leq C \, (1+|(h'',z)_{\ltwo}|) \exp \Big( |(h'', z)_{\ltwo} | \Big)
	\end{align*}
	for some constant $C<\infty$. Straightforward calculations, using \eqref{eqn:wnspace_ltwo_translation} and \eqref{eqn:wna_image_meas_lem}, provide that the above upper bound is indeed $\mubb$-integrable. Further, note that the case $z\in\coec^c$ does not need to be examined, since $\coec^c$ is a $\mubb$-null set. Hence the differentiation lemma for integrals and setting $\lambda=0$ in \eqref{eqn:ibp_deriv} implies that
	\begin{align*}
		&\int_{\ltwo} \partial_h \F(z) \, d(\densaeps\mubb)(z)\\
		&= \int_{\ltwo} \F(z) \, \frac{d}{d\lambda} \left[ \densaeps(z - \lambda h)\exp \left( - \lambda(h'', z)_{\ltwo} - \frac{\lambda^2}{2} \| h' \|_{\ltwo}^2 \right) \right]\bigg\rvert_{\lambda=0} d\mubb\\
		&= \int_{\ltwo} \F(z) \Big(-(h'',z)_{\ltwo} \, \densaeps(z) - \bar{h}_a \gmoll(\ddexta(z)) \Big) d\mubb,
	\end{align*}
	which proves \eqref{smoothedIbPF}. Further, the IbPF \eqref{smoothedIbPF_rho} can be derived from the IbPF \eqref{smoothedIbPF} by integration, more precisely
	\begin{align*}
		&\int_{\ltwo} \partial_h \F \, d(\denseps\mubb) + \int_{\ltwo} \F \, (h'',\cdot)_{\ltwo} \, d(\denseps\mubb)\\
		&= \int_{(0,1)} \left[ \int_{\ltwo} \partial_h \F \, d(\rho^{x,\varepsilon}\mubb) + \int_{\ltwo} \F \, ( h'', \cdot)_{\ltwo} \, d(\rho^{x,\varepsilon}\mubb) \right] dx\\
		&= - \int_{(0,1)} \llangle \F(\bb), \bar{h}_x \gmoll(\bb_x) \rrangle \, dx\\
		&= - \bllangle \F(\bb), \int_{(0,1)} \bar{h}_x \gmoll(\bb_x) \, dx \brrangle,
	\end{align*}
	where the first equality follows due to Fubini's theorem, the second equality holds by applying \eqref{smoothedIbPF}, and the third equality follows, since Bochner integrals commute with bounded and linear maps, considering that $\int_{(0,1)} \bar{h}_x \gmoll(\ddext) dx \in L^2(\ltwo, \mubb)$ is indeed well-defined as a Bochner integral, see Lemma \ref{lem:int_onefold_moll}. This finishes the proof.
\end{proof}

The intended IbPFs now follow by taking the limit $\varepsilon\downarrow 0$.

\begin{remark}
	For $h\in\domL$ and $\F\in\exp(C^{\infty})$, both expressions
	\begin{equation*}\label{eqn:ops_before_ext}
		\llangle \F(\bb), \bar{h}_a \DDa \rrangle \quad \text{and} \quad \bllangle \F(\bb), \int_{(0,1)}\bar{h}_x \DDx \, dx \brrangle
	\end{equation*}
	are well-defined. This follows due to $\F(\bb) \in \HidaFcts$ for all $\F\in\exp(C^{\infty})$, see \cite{GV18}, p.\ 356, and Remark 3.3, together with $\bar{h}_a \DDa, \, \int_{(0,1)}\bar{h}_x \DDx \, dx \in \HidaDistribs$, see Subsection \ref{sect:DonskersDelta} and Lemma \ref{lem:BInt_DDx_in_Sprime}, respectively.
\end{remark}

\begin{theorem}[Operator Extension and IbPFs]\label{thm:ibpf}
	Let $h\in\domL$ and $\F \in \exp(C^{\infty})$. Then we have the IbPFs
	\begin{align}
		\E_{\densa\mubb}\left[ \partial_h \F \right] + \E_{\densa\mubb}\left[ \F \, (h'',\cdot)_{\ltwo} \right] &= - \llangle \F(\bb), \bar{h}_a \DDa \rrangle\label{IbPF:rho_x}\\
		\text{and}\quad\E_{\dens\mubb}\left[ \partial_h \F \right] + \E_{\dens\mubb}\left[ \F \, (h'',\cdot)_{\ltwo} \right] &= - \bllangle \F(\bb), \int_{(0,1)}\bar{h}_x \DDx \, dx \brrangle\label{IbPF:rho}.
	\end{align}
	Further, the linear operators
	\begin{align*}
		\exp(C^{\infty})\incl \F &\mapsto \llangle \F(\bb), \bar{h}_a \DDa \rrangle \in\R\\
		\text{and}\quad\exp(C^{\infty})\incl \F &\mapsto \bllangle \F(\bb), \int_{(0,1)}\bar{h}_x \DDx \, dx \brrangle \in\R
	\end{align*}
	extend uniquely to $W^{1,2}(\ltwo, \mubb)$. Identifying these operators with their extensions in $W^{-1,2}(\ltwo, \mubb)$, \eqref{IbPF:rho_x} and \eqref{IbPF:rho} hold for all $\F \in C_b^1(\ltwo)$.
\end{theorem}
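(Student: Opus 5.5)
We pass to the limit $\varepsilon\downarrow 0$ in the smoothed IbPFs of Lemma \ref{lem:smoothedIbPF}, first for $\F\in\exp(C^{\infty})$, and then extend by density.

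\emph{Left hand sides.} Since $\inds=\gmoll\ast\indzi\to\indzi$ pointwise on $\R\setminus\{0\}$ and $0\le\inds\le 1$, we get $\densaeps(z)\to\densa(z)$ for every $z\in\coec$ with $\bar z_a\neq 0$. This is $\mubb$-a.e., because $\bb_a\sim\mathcal N(0,a(1-a))$ has no atom. Likewise, $\denseps(z)\to\dens(z)$ by dominated convergence in $dx$, since $\{x:\bar z_x=0\}$ is Lebesgue-null for $\mubb$-a.e.\ $z$ by Fubini. The integrands $\partial_h\F$ and $\F(h'',\cdot)_{\ltwo}$ are bounded, respectively $\mubb$-integrable (the latter by \eqref{eqn:wnspace_ltwo_translation}), so dominated convergence gives convergence of the left hand sides.

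\emph{Right hand sides.} For $\F\in\exp(C^{\infty})$ we have $\F(\bb)\in\HidaFcts$. By \eqref{eqn:DDforBM_approx}, $\bar h_a\gmoll(\bb_a)\to\bar h_a\DDa$ in $\HidaDistribs$, so $\llangle\F(\bb),\bar h_a\gmoll(\bb_a)\rrangle\to\llangle\F(\bb),\bar h_a\DDa\rrangle$, which proves \eqref{IbPF:rho_x}.

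For \eqref{IbPF:rho} we need $\int_{(0,1)}\bar h_x\gmoll(\bb_x)\,dx\to\int_{(0,1)}\bar h_x\DDx\,dx$ in $\HidaDistribs$, and we verify it with Theorem \ref{thm:conv_in_hidaspace} via the $\mathrm{S}$-transform. The $\mathrm{S}$-transform of $\gmoll(\bb_x)$ is the Gaussian
\[
(2\pi(x(1-x)+\varepsilon))^{-1/2}\exp\!\Big(-\tfrac{(\int q_x\varphi)^2}{2(x(1-x)+\varepsilon)}\Big),
\]
and for complex arguments $z\varphi$ its modulus is bounded by $(2\pi x(1-x))^{-1/2}\exp(C|z|^2\|\varphi\|_0^2)$. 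This is integrable in $x$ against the bounded $\bar h$, uniformly in $\varepsilon$, so Theorem \ref{thm:bochner_int_in_hidaspace} allows the $\mathrm{S}$-transform to be exchanged with the $dx$-integral. Pointwise convergence of the $\mathrm{S}$-transforms then follows by dominated convergence in $x$. This uniform exponential bound, with its $x(1-x)^{-1/2}$ singularity at the endpoints, is the technical core, and it also gives well-definedness of the limit distribution as in Lemma \ref{lem:BInt_DDx_in_Sprime}.

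\emph{Extension.} From the proven IbPF, for $\F\in\exp(C^{\infty})$,
\[
|\llangle\F(\bb),\bar h_a\DDa\rrangle|\le\|\partial_h\F\|_{L^1(\densa\mubb)}+\|\F(h'',\cdot)\|_{L^1(\densa\mubb)}.
\]
Using $\densa\le1$ and Cauchy--Schwarz, the right hand side is at most
\[
\|h\|_{\ltwo}\|D\F\|_{L^2(\mubb)}+\|(h'',\cdot)\|_{L^2(\mubb)}\|\F\|_{L^2(\mubb)}\le C_h\|\F\|_{W^{1,2}(\ltwo;\mubb)},
\]
where $(h'',\cdot)_{\ltwo}$ is Gaussian and hence in $L^2(\mubb)$. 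The same estimate holds with $\dens$. Since $\exp(C^{\infty})$ is dense in $W^{1,2}(\ltwo;\mubb)$ (\cite{GV18}, Theorem 5.2), the functionals extend uniquely and continuously to $W^{1,2}(\ltwo;\mubb)$.

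For $\F\in C_b^1(\ltwo)$, take $\F_n\in\exp(C^{\infty})$ with $\F_n\to\F$ in $W^{1,2}(\ltwo;\mubb)$. The left hand sides converge by the same $L^2$-estimates, since the density is at most $1$. The right hand sides converge by continuity of the extension. Hence \eqref{IbPF:rho_x} and \eqref{IbPF:rho} hold for all $\F\in C_b^1(\ltwo)$.

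The main obstacle is the uniform-in-$\varepsilon$ $U$-functional bound for the $dx$-integrated Donsker deltas near $x\in\{0,1\}$. I would handle it with the explicit Gaussian form of the $\mathrm{S}$-transform above and the bound $|\int q_x\varphi|^2\le x(1-x)\|\varphi\|_{L^2}^2$. This bound keeps the exponent at $|z|^2\|\varphi\|^2/2$ uniformly in $x$ and $\varepsilon$.
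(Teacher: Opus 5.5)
Your proposal is correct and follows essentially the paper's route: let $\varepsilon\downarrow 0$ in the smoothed IbPFs for $F\in\exp(C^{\infty})$, then extend to $W^{1,2}$ using the bound that the IbPF itself provides together with density of $\exp(C^{\infty})$. The only difference is the last step: the paper proves $W^{-1,2}$-convergence of the smoothed functionals and passes to the limit in the smoothed IbPF for $F\in C_b^1$ directly, whereas you approximate $F$ in $W^{1,2}$ by elements of $\exp(C^{\infty})$ and use continuity of both sides, which is equally valid; your explicit uniform $\mathrm{S}$-transform bound also fills in what the paper leaves to its appendix lemmas.
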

\begin{proof}
	For all $p \in [1,\infty)$, it holds that $\densaeps$, $\densa \in L^p(\ltwo, \mubb)$, since both maps are measurable and bounded. Moreover, we have $\densaeps \to \densa$ in $L^p(\ltwo, \mubb)$ for $\varepsilon \downarrow 0$ due to the dominated convergence theorem, considering that an element $\psi\in\ltwo$ is $\mubb$-a.s.\ contained in $\coec$ and satisfies $\bar{\psi}_a \neq 0$, and for such $\psi$ it holds
	\begin{equation*}
		\densaeps(\psi) = \int_{-\infty}^{\bar{\psi}_a} \gmoll(x) \, dx
		\; \xrightarrow{\varepsilon\downarrow 0} \;
		\densa(\psi)
		=\begin{cases}
			1, &\bar{\psi}_a> 0,\\0, &\bar{\psi}_a < 0.
		\end{cases}
	\end{equation*}
	Hence, considering that $\partial_h \F,\, \F(\cdot) (h'',\cdot)_{\ltwo}\in L^2(\ltwo, \mubb)$, an application of the Cauchy--Schwartz inequality yields
	\begin{equation*}
		\left|\int_{\ltwo} \left( \partial_h \F+\F\,(h'',\cdot)_{\ltwo} \right)(\densaeps - \densa) \, d\mubb \right|
		\leq \| \partial_h \F+ \F \, (h'',\cdot)_{\ltwo} \|_{L^2(\ltwo)} \|\densaeps-\densa\|_{L^2(\ltwo)}
	\end{equation*}
	where the upper bound goes to zero as $\varepsilon\downarrow 0$, implying that the left hand side (LHS) of Equation \eqref{smoothedIbPF} converges to the LHS of Equation \eqref{IbPF:rho_x} as $\varepsilon\downarrow 0$.
	If we assume that $\F \in \exp(C^{\infty})$, the same holds for the respective RHSs, considering that in this case we have $\F(\bb)\in\HidaFcts$ and applying that $\gmoll(\bb_a) \to \DDa$ in $\HidaDistribs$ as $\varepsilon\downarrow 0$, see Subsection \ref{sect:DonskersDelta}. This provides that the IbPF \eqref{IbPF:rho_x} holds for all $\F \in \exp(C^{\infty})$ and $h \in \domL$.
	
	Since $\exp(C^{\infty})$ is dense in $W^{1,2}(\ltwo,\mubb)$ and
	\begin{align*}
		&|\llangle \F(\bb), \bar{h}_a\DDa \rrangle|
		= \left|\int_{\ltwo} \left( \partial_h \F+\F \, (h'',\cdot)_{\ltwo} \right)\densa d\mubb \right|\\
		&\leq \| \partial_h \F \|_{L^2(\ltwo)} + \| (h'',\cdot)_{\ltwo} \|_{L^2(\ltwo)} \| \F \|_{L^2(\ltwo)}\\
		&\leq \| h \|_{\ltwo} \| D \F \|_{L^2(\ltwo;\ltwo)} + \|h'\|_{\ltwo} \| \F \|_{L^2(\ltwo)}
		\leq 2 \max\{\| h \|_{\ltwo}, \|h'\|_{\ltwo}\} \, \| \F \|_{W^{1,2}(\ltwo, \mubb)}
	\end{align*}
	for all $\F \in \exp(C^{\infty})$, the linear operator $\exp(C^{\infty})\incl \F \mapsto \llangle \F(\bb), \bar{h}_a \DDa\rrangle \in\R$ uniquely extends to $W^{1,2}(\ltwo,\mubb)$ via the BLT theorem. Here, the first inequality follows from $|\densa|\leq 1$ and the Cauchy--Schwarz inequality.
	
	With further investigation, we find the approximation
	\begin{equation*}
		\llangle (\cdot)\circ \bb, \bar{h}_a \gmoll(\bb_a)\rrangle \xrightarrow{\varepsilon\downarrow 0} \llangle (\cdot)\circ \bb, \bar{h}_a \DDa\rrangle \;\; \text{in $W^{-1,2}(\ltwo,\mubb)$},
	\end{equation*}
	which can be proven as follows: Let $\mathcal{D}$ be the closed unit ball in $\exp(C^{\infty})$ equipped with the $W^{1,2}(\ltwo, \mubb)$-norm, which is a dense subset of the closed unit ball in $W^{1,2}(\ltwo, \mubb)$, on which the IbPF \eqref{IbPF:rho_x} is known to hold. Therefore we can estimate
	\begin{align*}
		&\| \llangle (\cdot)\circ \bb, \bar{h}_a \gmoll(\bb_a)-\bar{h}_a \DDa\rrangle \|_{W^{-1,2}(\ltwo, \mubb)}\\
		&= \sup_{\F \in \mathcal{D}} |\llangle \F(\bb), \bar{h}_a \gmoll(\bb_a)-\bar{h}_a \DDa \rrangle| \\
		&= \sup_{\F \in \mathcal{D}} \left|\int_{\ltwo} \left( \partial_h \F+\F \, (h'',\cdot)_{\ltwo} \right)(\densaeps - \densa) d\mubb \right| \\
		&\leq \sup_{\F \in \mathcal{D}} \big( \| h \|_{L^2} \| D\F \|_{L^2} \| \densaeps -\densa \|_{L^2} + \| \F\|_{L^2} \| (h'',\cdot)_{\ltwo} \|_{L^3} \| \densaeps -\densa \|_{L^6} \big)\\
		&\leq \big(\| \densaeps -\densa \|_{L^2} + \| \densaeps -\densa \|_{L^6}\big) \big(\| h \|_{L^2} + \| (h'',\cdot)_{\ltwo} \|_{L^3}\big) \, C' \sup_{\F \in \mathcal{D}} \| \F \|_{W^{1,2}(\ltwo)}
	\end{align*}
	for some constant $C'<\infty$, where the first estimate follows by applying H\"{o}lder's inequality twice. Note that $\| (h'',\cdot)_{\ltwo} \|_{L^3}$ is indeed finite. Since the last upper bound converges to zero as $\varepsilon\downarrow 0$, we obtain that for all $\F \in C_b^1(\ltwo)$, the RHS of \eqref{smoothedIbPF} converges to the RHS of \eqref{IbPF:rho_x} and thus proves the IbPF \eqref{IbPF:rho_x}.
	
	In order to show the IbPF \eqref{IbPF:rho}, we proceed in a similar manner. First, we show that for any $p \in [1,\infty)$ it holds $\denseps \to \dens$ in $L^p(\ltwo, \mubb)$ as $\varepsilon \downarrow 0$, indeed: Aiming to apply the dominated convergence theorem and considering that $\denseps$, $\dens$ are measurable, non-negative and bounded by $1 \in L^p(\ltwo,\mubb)$, it remains to check that $\denseps \to \dens$ $\mubb$-a.s.\ as $\varepsilon \downarrow 0$. Therefore, we choose $\psi \in \coec$ s.th.\ $dx\{\psi = 0\}=0$, which is $\mubb$-a.s.\ the case, see \cite{GV18}, Lemma 5.1. Then for any arbitrary but fixed $S>0$, choose $R>0$ small enough s.th.\ $dx(M_R)<S/2$, where $M_R\coloneqq \{\psi\in B_R(0)\}$, which is possible due to $dx\{ \psi \in B_D(0) \} \to dx\{\psi = 0\}=0$ as $D \downarrow 0$ by the continuity of measures from above. Further, basic arguments provide the existence of some $\delta > 0$ small enough s.th.\ for all $\varepsilon \in (0,\delta)$ it holds
	\begin{equation*}\label{eqn:IndApprox}
		\big| \inds(x) - \indzi(x) \big| < S/2 \quad \text{for all } x \in B_{R}(0)^c.
	\end{equation*}
	These choices for $R$ and $\delta$ yield that for all $\varepsilon \in (0,\delta)$ it holds
	\begin{equation*}
		|\denseps(\psi) - \dens(\psi)|
		\leq \int_{M_R^c} \left|\inds(\bar{\psi}_x) - \indzi(\bar{\psi}_x) \right| dx  + \int_{M_R} 1 \, dx
		< S,
	\end{equation*}
	which proves the claimed almost sure convergence.	
	Considering that $\int_{(0,1)} \bar{h}_x\gmoll(\bb_x) \, dx$ converges to $\int_{(0,1)} \bar{h}_x \DDx \,dx$ in $\HidaDistribs$, see Lemma \ref{lem:conv_int_onefold_moll}, and the fact that $|\dens|\leq 1$, the IbPF \eqref{IbPF:rho} can be shown by tracing the arguments above for showing the IbPF \eqref{IbPF:rho_x}, replacing $\densa$ and $\densaeps$ by $\dens$ and $\denseps$, respectively.
\end{proof}

\section{Approximation by Vector Measures}
With the aim of representing the RHSs of the IbPFs \eqref{IbPF:rho_x} and \eqref{IbPF:rho} as integral operators w.r.t.\ vector measures of bounded variation, we begin with an introduction to vector measures and present some key theorems in this context.

\subsection{A Brief Introduction of Vector Measures}
In this introduction, we largely follow \cite{MS94}, Section 1. As a standard reference for countably additive set functions in general, \cites{DS88, DU77} is recommended.

Let $\mathcal{F}$ be a $\sigma$-algebra on a set $X$ and let $(B, \| \cdot \|_B)$ denote a real Banach space. Then, $\sigma$-additive functions $V: \mathcal{F} \to B$ are called \emph{vector measures}, and we denote the set of all vector measures by $\ca(\mathcal{F}, B)$. Note that $\ca(\mathcal{F}, \R)$ is exactly the set of all finite signed measures as defined in standard literature, hence the term ``vector measures''. The \emph{variation} of a vector measure $V \in \ca(\mathcal{F},B)$ is the measure (indeed) $\var{V}: \mathcal{F} \to [0,\infty]$ defined by
\begin{equation*}
	\var{V}(E) \coloneqq \sup \bigg\{ \sum_n \|V(E_n)\|_B \, \bigg| \, \text{$(E_n) \subseteq \mathcal{F}$ is a finite partition of $E$} \bigg\}
\end{equation*}
for all $E\in\mathcal{F}$. We define, with abuse of notation, that $\var{V} \coloneqq \var{V}(X)$, and say that $V$ is of \emph{bounded variation} iff $\var{V} < \infty$. Further, the \emph{semivariation} $\svar{V}: \mathcal{F} \to [0,\infty]$ of $V$ is defined by
\begin{equation*}
	\svar{V}(E) \coloneqq \sup \big\{ \var{\varphi(V)}(E) \,\big|\, \varphi \in B', \|\varphi\|_{B'} \leq 1 \big\}
\end{equation*}
for all $E\in\mathcal{F}$, where $B'$ denotes the topological dual space of $B$. Let us write, with abuse of notation, that $\svar{V} \coloneqq \svar{V}(X)$. If not specified otherwise, we say that a set $\mathcal{V} \subseteq \ca(\mathcal{F}, B)$ is \emph{uniformly bounded} iff it is uniformly bounded w.r.t.\ $\svar{\cdot}$. We further have $\| V(\cdot) \|_B \leq \svar{V}(\cdot) \leq \var{V}(\cdot)$.

For a $\mathcal{F}$-measurable simple function $f = \sum a_n \ind_{E_n} : X \to \R$\ (where we assume that $(E_n)\subseteq \mathcal{F}$ is a finite partition of $X$) and $V \in \ca(\mathcal{F},B)$ we define the linear integral
\begin{equation*}
	\int_X f dV \coloneqq \sum a_n V(E_n), \text{ satisfying } \Big\| \int_X f dV \Big\|_B \leq \| f \|_{\infty} \svar{V}
\end{equation*}
Using the completeness of $B$, we can canonically extend this integral for any uniform limit $f:X \to \R$ of a sequence $(f_n)$ of $\mathcal{F}$-measurable simple functions, in particular every bounded $\mathcal{F}/\borel(\R)$-measurable function $f:X\to\R$, and if $\mathcal{F}$ is a Borel $\sigma$-algebra, this applies to all $f \in C_b(X)$. As usual, we set $\int_A f dV \coloneqq \int_X \ind_A \cdot f \, dV$ for every $A\in\mathcal{F}$. Moreover, for any $\varphi \in B'$ and any function $f:X \to \R$ which is a uniform limit of a sequence of $\mathcal{F}$-measurable simple functions we have $\varphi\left( \int_X f dV \right) = \int_X f d\left( \varphi(V) \right)$.

Now suppose that $X$ is a topological space, then a family $\mathcal{V} \subseteq \ca(\borel(X), B)$ is \emph{uniformly tight (w.r.t.\ $\svar{\cdot}$)} iff
\begin{equation}\label{def:UnifTight}
	\forall \varepsilon > 0 \, \exists K \subseteq X \, \text{compact} \; \forall V \in \mathcal{V}: \svar{V}(X \setminus K) < \varepsilon.
\end{equation}
Accordingly, we call a family $\mathcal{V} \subseteq \ca(\borel(X), B)$ \emph{uniformly tight w.r.t.\ $\var{\cdot}$} iff \eqref{def:UnifTight} holds true even when $\svar{\cdot}$ is interchanged for $\var{\cdot}$. Further, a sequence of vector measures $(V_n)$ in $\ca(\borel(X), B)$ \emph{converges weakly to} $V \in \ca(\borel(X), B)$ iff
\begin{equation*}
	\forall f \in C_b(X), \varphi \in B': \; \varphi \left( \int_X f dV_n \right) \xrightarrow{n \to \infty} \varphi \left( \int_X f dV \right).
\end{equation*}
A family $\mathcal{V} \subseteq \ca(\borel(X), B)$ is called \emph{weakly sequentially compact} iff every sequence $(V_n)$ in $\mathcal{V}$ has a subsequence that converges weakly to some $V \in \ca(\borel(X),B)$.

The following theorem is central for Section \ref{sect:approx_sequ_and_weak_limits}.

\begin{theorem}\label{thm:GenProkhorov}
	(Generalized Version of Prokhorov's Theorem) Let X be a metric space with Borel $\sigma$-field $\borel(X)$ and let $B$ be a reflexive Banach space. Then any uniformly bounded and uniformly tight (both w.r.t.\ $\svar{\cdot}$) family $\mathcal{V} \subseteq \ca(\borel(X),B)$ is weakly sequentially compact.
\end{theorem}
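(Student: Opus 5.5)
The plan is to reduce the statement to the scalar Prokhorov theorem and to the weak compactness of bounded sets in a reflexive space. Let $(V_n)$ be a sequence in $\mathcal{V}$.

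\emph{Reduction to a single control measure.} For each $n$ I choose a Rybakov control measure: a functional $\varphi_n\in B'$ with $\|\varphi_n\|_{B'}\le 1$ such that $\lambda_n\coloneqq\var{\varphi_n(V_n)}$ satisfies $V_n\ll\lambda_n$. By the standard estimate, $\lambda_n(E)\le\svar{V_n}(E)\le 4\sup_{F\subseteq E}\|V_n(F)\|_B$. Hence the family $(\lambda_n)$ of finite positive Borel measures is uniformly bounded, since $\lambda_n(X)\le\svar{V_n}$, and uniformly tight, since $\lambda_n(X\setminus K)\le\svar{V_n}(X\setminus K)$. I will use this only as an auxiliary device.

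\emph{Defining the limit.} Fix a countable family $\mathcal{C}\subseteq C_b(X)$, closed under rational linear combinations, which determines weak convergence on the tight, bounded family. Such a family can be taken from bounded Lipschitz functions, using that each compact $K_m$ from the tightness condition is separable. For $f\in\mathcal{C}$, the vectors $\int_X f\,dV_n$ are bounded by $\|f\|_\infty\sup_n\svar{V_n}$. Since $B$ is reflexive, bounded sets are weakly sequentially compact (Eberlein--\v{S}mulian). A diagonal argument then gives a subsequence along which $\int f\,dV_{n}$ converges weakly in $B$ for every $f\in\mathcal{C}$. Using tightness, I extend this to all $f\in C_b(X)$. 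On $K_m$, the function $f$ is uniformly approximated by elements of $\mathcal{C}$ via Stone--Weierstrass or Lipschitz approximation, and the part outside $K_m$ contributes at most $\|f\|_\infty\varepsilon$ uniformly in $n$. Thus $T(f)\coloneqq w\text{-}\lim\int f\,dV_n$ exists for all $f\in C_b(X)$, and it defines a bounded linear map $T:C_b(X)\to B$.

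\emph{Representing $T$ by a vector measure (main obstacle).} I need $V\in\ca(\borel(X),B)$ with $T(f)=\int f\,dV$. For each $\varphi\in B'$, the functional $f\mapsto\varphi(T f)$ is the limit of the scalar measures $\varphi(V_n)$. These are uniformly bounded and tight, so the scalar Prokhorov theorem applied to their positive and negative parts yields a tight finite signed measure $m_\varphi$ with $\varphi(Tf)=\int f\,dm_\varphi$. I then define $V(E)$ through the bidual: $E\mapsto m_\cdot(E)$ is linear in $\varphi$ and bounded by $\sup_n\svar{V_n}\|\varphi\|$, so it gives an element of $B''=B$. This is where reflexivity is essential. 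Each map $\varphi\circ V=m_\varphi$ is countably additive, so by the Orlicz--Pettis theorem $V$ is norm countably additive. Finally, $\varphi(\int f\,dV)=\int f\,dm_\varphi=\varphi(Tf)$ for all $\varphi$, which gives weak convergence $V_n\to V$. The delicate points are linearity of $\varphi\mapsto m_\varphi(E)$, which follows from uniqueness of tight representing measures for functionals on $C_b$, and the countable additivity. I expect this step to require the most care.
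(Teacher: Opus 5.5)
Your argument is correct, but it does not follow the paper: the paper gives no proof of its own and only cites M\"arz--Shortt \cite{MS94}, Corollary 1.6. You instead give a self-contained proof built from standard tools.
\begin{itemize}
\item Eberlein--\v{S}mulian plus a diagonal extraction over a countable family adapted to the compacts $K_m$ yields a single subsequence and a bounded operator $T\colon C_b(X)\to B$.
\item The scalar Prokhorov theorem, applied to $\varphi(V_n)^{\pm}$ for each fixed $\varphi\in B'$, yields $m_\varphi$. The fact that the sub-subsequence depends on $\varphi$ is harmless, because the full limit $\varphi(Tf)$ already exists.
\item The identification $B''=B$ together with Orlicz--Pettis assembles $V$ and gives its countable additivity.
\end{itemize}

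The citation is shorter and draws on M\"arz--Shortt's more general framework. Your route shows exactly where reflexivity enters. It is used once for weak sequential compactness of bounded sets, and once to realize $E\mapsto(\varphi\mapsto m_\varphi(E))$ as an element of $B$. Your route also needs only uniqueness of finite signed Borel measures determined by $C_b(X)$. This holds on any metric space, via $\max(0,1-k\,d(\cdot,F))\downarrow\ind_F$ for closed $F$, so tightness of $m_\varphi$ is not even needed.

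There are a few small points to tighten.
\begin{itemize}
\item In the extension step, closeness of $g\in\mathcal{C}$ to $f$ on $K_m$ is not enough. You also need a global bound such as $\|g\|_\infty\le\|f\|_\infty+1$ on all of $X$. Otherwise the contribution of $X\setminus K_m$ is not controlled by $\svar{V_n}(X\setminus K_m)$.
\item Stone--Weierstrass polynomials do not give that global bound automatically. You can fix this by taking $\mathcal{C}$ to consist of norm-preserving Tietze extensions of countable dense subsets of the spaces $C(K_m)$, or by closing $\mathcal{C}$ under rational truncations.
\item Passing from ``$\int f\,dV_n$ is weakly Cauchy for every $f\in C_b(X)$'' to the existence of a weak limit uses weak sequential completeness of reflexive spaces. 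You could say this explicitly.
\item The bound $|m_\varphi|(X)\le\|\varphi\|\sup_n\svar{V_n}$ follows from convergence of the total masses of $\varphi(V_{n_k})^{\pm}$.
\item The Rybakov control measures in your first step are never used and can be dropped.
\end{itemize}
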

\begin{proof}
	See \cite{MS94}, Corollary 1.6.
\end{proof}

We also use that if a sequence $(V_n)$ in $\ca(\borel(X), B)$, where $X$ is a metric space and $B$ a Banach space, converges weakly to $V$ and is uniformly bounded w.r.t.\ $\var{\cdot}$, then bounded variation is preserved in the weak limit, i.e.\ $V$ is of bounded variation, see Lemma \ref{lem:vecmeas_variationformula}.

\subsection{Approximating Sequences of Vector Measures and their Weak Limits}\label{sect:approx_sequ_and_weak_limits}
With the tools developed in the previous subsection, we can now address the representation problem related to the RHS of \eqref{IbPF:rho_x}. Recalling
\begin{equation*}
	L_+^2(\SchwartzDistribs, \mu) \incl \gmoll(\bb_a) \xrightarrow[\varepsilon \downarrow 0]{\HidaDistribs} \DDa \in \HidaDistribs,
\end{equation*}
see Subsection \ref{sect:DonskersDelta}, we define a finite measure $\approxmeasx$ on $(\ltwo, \borel(\ltwo))$ by 
\begin{equation*}
	\approxmeasx(B) \coloneqq \int_{B\cap \coec} \gmoll(\bar{z}_a) \, d\mubb(z) \quad \text{for all } B \in \borel(\ltwo).
\end{equation*}
Further, for all $\varepsilon>0$, we define an approximating vector measure $\vvmeasepsx \in \ca(\borel(\ltwo),\hno)$ via
\begin{equation*}\label{def:vvmeasepsx}
	\vvmeasepsx(h,B) \coloneqq \int_{B\cap \coec} \bar{h}_a \gmoll(\bar{z}_a) \, d\mubb(z) = \approxmeasx(B) \, \delta_a(h)
\end{equation*}
for all $h\in\ho$ and $B\in\borel(\ltwo)$. Note that $\vvmeasepsx$ is indeed a well-defined element of $\ca(\borel(\ltwo),\hno)$, since $\vvmeasepsx(\cdot,B) = \approxmeasx(B) \, \delta_a \in \hno$ for all $B\in\borel(\ltwo)$, and $\vvmeasepsx(h,\cdot)$ is $\sigma$-additive for all $h\in\ho$ due to $\sigma$-additivity of $\approxmeasx$.

\begin{lemma}\label{lem:approxmeasx_unif_bounded}
	The family $(\approxmeasx)_{\varepsilon>0}$ of measures on $(\ltwo, \borel(\ltwo))$ is uniformly bounded for any $a\in (0,1)$. More precisely, it holds for all $\varepsilon>0$ that
	\begin{equation*}
		\approxmeasx(\ltwo) \leq (2\pi a (1-a))^{-1/2} < \infty.
	\end{equation*}
\end{lemma}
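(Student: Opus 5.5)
We compute $\approxmeasx(\ltwo)$ exactly by pushing it forward to the one-dimensional marginal of the Brownian bridge at $a$. Since $\coec$ has full $\mubb$-measure (Remark \ref{rem:embeddings_and_measurability} and the construction $\mubb=\bb(\mu)$ with $\bb$ taking values in $\coec$), we have
\begin{equation*}
	\approxmeasx(\ltwo) = \int_{\coec} \gmoll(\bar{z}_a)\, d\mubb(z) = \int_{\SchwartzDistribs} \gmoll(\bb_a(\omega))\, d\mu(\omega).
\end{equation*}
The second equality is the transformation formula for the image measure $\mubb=\bb(\mu)$. We use that $\overline{\bb(\omega)}_a=\bb_a(\omega)$ for all $\omega$, because the paths of $\bb$ are continuous. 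We also use that $z\mapsto \gmoll(\bar{z}_a)$ is measurable on $\coec$, since point evaluation is $\borel(\co)$-measurable and $\iota\rvert_{\coec}$ is a Borel isomorphism.

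Next, since $\bb_a\sim\mathcal{N}(0,a(1-a))$, the last integral equals $\int_\R \gmoll(y)\,\gamma_{a(1-a)}(y)\,dy$, where $\gamma_{s}$ denotes the centered Gaussian density with variance $s$ (so $\gamma_\varepsilon=\gmoll$). This is the convolution $(\gmoll\ast\gamma_{a(1-a)})(0)$, by symmetry of $\gmoll$. Convolving Gaussian densities adds variances, so the integral equals
\begin{equation*}
	\gamma_{a(1-a)+\varepsilon}(0) = \big(2\pi(a(1-a)+\varepsilon)\big)^{-1/2}.
\end{equation*}

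Finally, since $\varepsilon>0$, this is bounded by $(2\pi a(1-a))^{-1/2}$. The bound is finite because $a\in(0,1)$, and it is uniform in $\varepsilon$.

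There is no real obstacle here. The only point needing care is the identification $\approxmeasx(\ltwo)=\E_\mu[\gmoll(\bb_a)]$, i.e.\ that the $\co$-representative of $\bb(\omega)$ evaluated at $a$ is $\bb_a(\omega)$. This holds surely by continuity of the paths, so it holds in particular almost surely, which is all we need. Alternatively, instead of computing the convolution one can simply bound $\E[\gmoll(\bb_a)]=\int\gmoll(y)\gamma_{a(1-a)}(y)\,dy \le \|\gamma_{a(1-a)}\|_\infty\int\gmoll = (2\pi a(1-a))^{-1/2}$, which gives the claim directly.
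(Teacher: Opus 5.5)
Your proof is correct and follows essentially the same route as the paper. Both rewrite $\approxmeasx(\ltwo)$ as $\int \gmoll(\bb_a)\,d\mu$, use that $\bb_a$ (equivalently $\langle q_a,\cdot\rangle$, via \eqref{eqn:wna_image_meas_lem} and $\|q_a\|_{L^2(\R)}^2=a(1-a)$) is $\mathcal{N}(0,a(1-a))$-distributed, and bound $\int_\R \gmoll(y)\gamma_{a(1-a)}(y)\,dy \leq \|\gmoll\|_{L^1(\R)}\|\gamma_{a(1-a)}\|_\infty$ — this last step is exactly your alternative argument; your exact evaluation $(2\pi(a(1-a)+\varepsilon))^{-1/2}$ via Gaussian convolution is a harmless sharpening, consistent with the $\mathrm{T}$-transform formula of Lemma \ref{lem:DDapprox} at $\varphi=0$.
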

\begin{proof}
	In use of \eqref{eqn:wna_image_meas_lem} and $\| q_a \|_{L^2(\R)}^2 = a(1-a)$, we calculate
	\begin{align*}
		\approxmeasx(\ltwo)
		&= \int_{B\cap \coec} |\gmoll(\bar{z}_a)| \, d\mubb(z)
		= \int_{\SchwartzDistribs} \gmoll(\langle q_a, \omega \rangle) \, d\mu(\omega)\\
		&= \int_{\R} \gmoll(y) \gamma_{a(1-a)}(y) \, dy
		\leq (2\pi a (1-a))^{-1/2} < \infty. \qedhere
	\end{align*}
\end{proof}

\begin{remark}\label{rem:varmeas_of_vvmeasepsx}
	For any $h\in \ho$, the variation of the signed measure $\vvmeasepsx(h,\cdot) = \bar{h}_a \approxmeasx(\cdot)$ is the (finite) measure $\var{\vvmeasepsx(h, \cdot)}(\cdot) = |\bar{h}_a| \, \approxmeasx(\cdot)$, which implies
	\begin{equation}\label{eqn:vvmeasepsx_var}
		\var{\vvmeasepsx}(\cdot) = \sup_{h\in B_1(0)} |\bar{h}_a| \, \approxmeasx(\cdot) = \|\delta_a\|_{\hno} \, \approxmeasx(\cdot).
	\end{equation}
\end{remark}

\begin{corollary}\label{cor:vvmeasepsx_unif_bounded}
    The family $(\vvmeasepsx)_{\varepsilon >0} \subseteq \ca\left( \borel(\ltwo), H_0^{-1}(0,1) \right)$ is uniformly bounded w.r.t.\ $\var{\cdot}$, and hence uniformly bounded w.r.t.\ $\svar{\cdot}$.
\end{corollary}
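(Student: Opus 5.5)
The corollary follows by combining the variation formula of Remark~\ref{rem:varmeas_of_vvmeasepsx} with the uniform mass bound of Lemma~\ref{lem:approxmeasx_unif_bounded}, and then comparing semivariation with variation.

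First we make sure the constant $\|\delta_a\|_{\hno}$ in \eqref{eqn:vvmeasepsx_var} is finite. By Subsection~\ref{sect:misc_intro}, every $h\in\ho$ has a continuous representative $\bar h$ with $\|\bar h\|_\infty\le C\|h\|_{H^{1,2}(0,1)}$. Hence
\begin{equation*}
	|\delta_a(h)|=|\bar h_a|\le C\|h\|_{\ho},
\end{equation*}
so $\delta_a\in\hno$ with $\|\delta_a\|_{\hno}\le C$, and $C$ is independent of $a$ and $\varepsilon$.

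Next we evaluate \eqref{eqn:vvmeasepsx_var} on the whole space $\ltwo$ and apply Lemma~\ref{lem:approxmeasx_unif_bounded}:
\begin{equation*}
	\var{\vvmeasepsx}=\|\delta_a\|_{\hno}\,\approxmeasx(\ltwo)\le C\,(2\pi a(1-a))^{-1/2}
\end{equation*}
for all $\varepsilon>0$. This gives uniform boundedness with respect to $\var{\cdot}$.

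The identity \eqref{eqn:vvmeasepsx_var} is the only step needing care, and it is quick to confirm. Since $\vvmeasepsx(B)=\approxmeasx(B)\,\delta_a$ with $\approxmeasx\ge0$, any finite partition $(E_n)$ of $B$ satisfies
\begin{equation*}
	\sum_n\|\vvmeasepsx(E_n)\|_{\hno}=\|\delta_a\|_{\hno}\sum_n\approxmeasx(E_n)=\|\delta_a\|_{\hno}\,\approxmeasx(B).
\end{equation*}
The sum does not depend on the partition, so the supremum in the definition of the variation equals $\|\delta_a\|_{\hno}\,\approxmeasx(B)$.

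Finally, uniform boundedness with respect to $\svar{\cdot}$ follows from the general inequality $\svar{V}(\cdot)\le\var{V}(\cdot)$ stated in the introduction to vector measures. Applied to $\vvmeasepsx$, it gives $\svar{\vvmeasepsx}\le C(2\pi a(1-a))^{-1/2}$ uniformly in $\varepsilon$.
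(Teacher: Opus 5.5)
Your proposal is correct and follows the same route as the paper, which derives the corollary directly from the variation identity \eqref{eqn:vvmeasepsx_var} of Remark~\ref{rem:varmeas_of_vvmeasepsx} combined with the uniform mass bound of Lemma~\ref{lem:approxmeasx_unif_bounded}. Your other steps — checking $\|\delta_a\|_{\hno}\le C<\infty$, verifying that the partition sum is independent of the partition, and passing to the semivariation via $\svar{V}(\cdot)\le\var{V}(\cdot)$ — only spell out details the paper leaves implicit.
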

\begin{proof}
	This is a direct consequence Remark \ref{rem:varmeas_of_vvmeasepsx} and Lemma \ref{lem:approxmeasx_unif_bounded}.
\end{proof}

Proceeding similarly for $\dens$, we reconsider that for any $h \in \ho$ it holds
\begin{equation*}
    L^2(\SchwartzDistribs, \mu) \incl \int_{(0,1)} \bar{h}_x \gmoll(\bb_x(\omega)) \, dx \xrightarrow[\varepsilon \downarrow 0]{\HidaDistribs} \int_{(0,1)} \bar{h}_x \delta_0(\bb_x) \, dx \in \HidaDistribs,
\end{equation*}
where we interpret all considered integrals as Bochner integrals, see Lemma \ref{lem:conv_int_onefold_moll}, Corollary \ref{cor:BInt_in_L2} and Lemma \ref{lem:BInt_DDx_in_Sprime}. This motivates the definition of the approximating vector measure $\vvmeaseps \in \ca(\borel(\ltwo), H_0^{-1}(0,1))$ by
\begin{align}\label{def:vvmeaseps}
	\vvmeaseps(h,B) \coloneqq \int_{B\cap \coec} \int_{(0,1)} \bar{h}_x\gmoll(\bar{z}_x) \, dx \, d\mubb(z) \quad \text{for all } h\in\ho, \, B\in\borel(\ltwo),
\end{align}
for all $\varepsilon>0$. Here, checking that $\vvmeaseps$ is indeed a well-defined element of $\ca(\borel(\ltwo), H_0^{-1}(0,1))$ is a bit more involved:

\begin{lemma}[Well-Definedness of $\vvmeaseps$]\label{lem:welldef_vvmeaseps}
	For any $\varepsilon>0$, the vector-valued set function $\vvmeaseps$, as defined in \eqref{def:vvmeaseps}, is a well-defined element of $\ca(\borel(\ltwo), H_0^{-1}(0,1))$.
\end{lemma}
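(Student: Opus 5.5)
The plan is to check three things in turn: the inner integral is well defined, $\vvmeaseps(\cdot,B)$ is a bounded linear functional on $\ho$, and $\vvmeaseps$ is countably additive in the norm of $\hno$. The key observation is that the dependence on $h$ enters only through the bounded functional $h\mapsto\bar h_x$. Uniformly in $x$, we have $|\bar h_x|\le\|\bar h\|_\infty\le C\|h\|_{\ho}$ by the Sobolev embedding of Subsection~\ref{sect:misc_intro}.

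\textbf{Measurability and finiteness.} For $z\in\coec$, the map $(x,z)\mapsto \bar h_x\gmoll(\bar z_x)$ is jointly measurable on $(0,1)\times\coec$. This is the same argument as in Lemma~\ref{lem:L1+_RayHamza}: the evaluation $[0,1]\times\co\ni(x,\psi)\mapsto\psi_x$ is continuous, and $\coec\cong\co$ as Borel spaces by Remark~\ref{rem:embeddings_and_measurability}. The integrand is bounded by $C\|h\|_{\ho}(2\pi\varepsilon)^{-1/2}$, so Tonelli/Fubini applies. Hence
\[
z\mapsto\int_{(0,1)}\bar h_x\gmoll(\bar z_x)\,dx
\]
is measurable and bounded, and $\vvmeaseps(h,B)$ is a finite real number. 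For a sharper, $\varepsilon$-uniform estimate (useful later, but not needed here), I would also define the finite positive measure
\[
\mu^{\varepsilon}(B):=\int_{B\cap\coec}\int_{(0,1)}\gmoll(\bar z_x)\,dx\,d\mubb(z).
\]
By Fubini and the computation of Lemma~\ref{lem:approxmeasx_unif_bounded}, $\mu^{\varepsilon}(\ltwo)\le\int_0^1(2\pi x(1-x))^{-1/2}dx=\sqrt{\pi/2}<\infty$.

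\textbf{Linearity, continuity in $h$, and the variation bound.} Linearity in $h$ is clear. Moreover,
\[
|\vvmeaseps(h,B)|\le \|\bar h\|_\infty\,\mu^\varepsilon(B)\le C\|h\|_{\ho}\,\mu^\varepsilon(B),
\]
so $\vvmeaseps(\cdot,B)\in\hno$ with $\|\vvmeaseps(\cdot,B)\|_{\hno}\le C\mu^\varepsilon(B)$. Applying this to finite partitions $(E_n)$ of $E$ gives
\[
\sum_n\|\vvmeaseps(E_n)\|_{\hno}\le C\mu^\varepsilon(E),
\]
so $\var{\vvmeaseps}\le C\mu^\varepsilon$ and in particular $\vvmeaseps$ has bounded variation.

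\textbf{$\sigma$-additivity in norm.} This is the only point needing care, since weak (pointwise in $h$) $\sigma$-additivity is immediate from dominated convergence for each fixed $h$. For norm convergence, take disjoint $(B_n)$ with union $B$. Then
\[
\Big\|\vvmeaseps(B)-\sum_{n\le N}\vvmeaseps(B_n)\Big\|_{\hno}=\Big\|\vvmeaseps\Big(\bigcup_{n>N}B_n\Big)\Big\|_{\hno}\le C\,\mu^\varepsilon\Big(\bigcup_{n>N}B_n\Big),
\]
using finite additivity in $h$ for each fixed set. The right-hand side tends to $0$ because $\mu^\varepsilon$ is a finite measure and is therefore continuous from above. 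Hence $\vvmeaseps\in\ca(\borel(\ltwo),\hno)$.

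The step where I would expect any resistance is the joint measurability and Fubini justification. The key difficulty in the norm $\sigma$-additivity is already neutralised by the domination through the finite scalar measure $\mu^\varepsilon$. (Alternatively, one could invoke the Orlicz--Pettis theorem, since $\hno$ is separable and weak $\sigma$-additivity holds, but the direct domination argument is simpler.)
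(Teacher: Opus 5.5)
Your proof is correct and is essentially the paper's argument. The paper likewise bounds $\|\vvmeaseps(\cdot,B)\|_{\hno}$ by $C\int_{B\cap\coec}\int_{(0,1)}\gmoll(\bar z_x)\,dx\,d\mubb(z)$, which is your $C\mu^{\varepsilon}(B)$, and gets norm $\sigma$-additivity by sending this bound on the tail sets $\bigsqcup_{n>N}B_n$ to zero via dominated convergence; this is exactly your continuity-from-above step, and packaging the bound as the finite measure $\mu^{\varepsilon}$ also yields the estimate $\var{\vvmeaseps}\le C\mu^{\varepsilon}$ that the paper records in the subsequent remark.
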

\begin{proof}
	The expression $\vvmeaseps(h,B)$ is well-defined for all $h\in \ho$ and $\varepsilon>0$, since the integrand $(0,1)\times\ltwo \incl (x,z) \mapsto \bar{h}_x\densxbv(z) \in \R$ is bounded, measurable and therefore integrable, where measurability is due to continuity and boundedness w.r.t.\ the $x$-component and measurability w.r.t.\ the $z$-component.
	
	Further, let $B\in\borel(\ltwo)$ arbitrary. Then, $\vvmeaseps(\cdot,B)$ is clearly linear on $\ho$, and moreover bounded, considering the estimate
	\begin{equation}\label{eqn:vvmeaseps_modulus_est}
		|\vvmeaseps(h,B)|
		\leq C \|h\|_{\ho} \int_{B \cap\coec} \int_{(0,1)} \gmoll(\bar{z}_x) \,dx \, d\mubb(z)
		\leq C \|h\|_{\ho} \int_{(0,1)} \frac{1}{\sqrt{2\pi x(1-x)}} \, dx,
	\end{equation}
	for some $C<\infty$ independent of $h$ and $B$, where the second inequality holds due to Tonelli's theorem and Lemma \ref{lem:approxmeasx_unif_bounded}. This shows that $\vvmeaseps(\cdot, B) \in \hno$, i.e.\ $\vvmeaseps$ is well-defined as a set function, and we moreover obtain the upper bound
	\begin{equation}\label{eqn:vvmeaseps_norm_est}
		\| \vvmeaseps(\cdot,B) \|_{\hno}
		\leq C \int_{B\cap\coec} \int_{(0,1)} \gmoll(\bar{z}_x) \, dx \, d\mubb(z).
	\end{equation}
	It remains to show countable additivity: Let $(B_n)_{n\in\N}$ a sequence of disjoint measurable sets and let $A_n\coloneqq \bigsqcup_{n=N+1}^{\infty}B_n$ for all $N\in\N$. Then it holds
	\begin{equation*}
		\bigg\| \vvmeaseps\bigg(\cdot, \bigsqcup_{n=0}^{\infty} B_n\bigg) - \sum_{n=0}^{N} \vvmeaseps(\cdot, B_n) \bigg\|_{\hno}
		\leq C \int_{A_n} \int_{(0,1)} \gmoll(\bar{z}_x) \, dx \, d\mubb(z)
		\xrightarrow{N\uparrow \infty} 0,
	\end{equation*}
	where the estimate holds due to \eqref{eqn:vvmeaseps_norm_est} and the convergence follows from the dominated convergence theorem, considering that $\ind_{A_n}\downarrow 0$ as $N\uparrow \infty$. This finalizes the proof.
\end{proof}

\begin{remark}
	[The Variation of $\vvmeaseps$] The upper bound for the norm of $\vvmeaseps(\cdot,B)$, as presented in \eqref{eqn:vvmeaseps_norm_est}, implies that, for $C<\infty$ chosen as in Subsection \ref{sect:misc_intro}, we have
	\begin{equation}\label{eqn:vvmeaseps_var_est}
		\var{\vvmeaseps}(B) \leq C \int_B \int_{(0,1)} \gmoll(\bar{z}_x) \, dx \, d\mubb(z) \quad \text{for all } B \in \borel(\ltwo).
	\end{equation}
\end{remark}

\begin{lemma}\label{lem:nu_unifbounded}
    The family $(\nu_{\varepsilon})_{\varepsilon\in (0,1]}$ is uniformly bounded w.r.t.\ $\var{\cdot}$.
\end{lemma}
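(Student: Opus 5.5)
The plan is to combine the variation estimate \eqref{eqn:vvmeaseps_var_est} with Tonelli's theorem and the one-point bound from Lemma \ref{lem:approxmeasx_unif_bounded}. Here $\nu_\varepsilon$ in the statement is read as $\vvmeaseps$.

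First, fix $\varepsilon\in(0,1]$ and take $B=\ltwo$ in \eqref{eqn:vvmeaseps_var_est}. This gives
\begin{equation*}
	\var{\vvmeaseps} \leq C \int_{\ltwo} \int_{(0,1)} \gmoll(\bar{z}_x)\, dx\, d\mubb(z),
\end{equation*}
with $C$ the Sobolev embedding constant from Subsection \ref{sect:misc_intro}, which does not depend on $\varepsilon$. Strictly, the inner integrand is defined only for $z\in\coec$; since $\coec^c$ is a $\mubb$-null set, we integrate over $\coec$. Because the integrand is non-negative and jointly measurable (as argued in Lemma \ref{lem:welldef_vvmeaseps}), Tonelli's theorem lets us exchange the integrals. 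The bound then becomes
\begin{equation*}
	C\int_{(0,1)} \approxmeasx[x](\ltwo)\,dx, \quad\text{where } \approxmeasx[x](\ltwo)=\int_{\coec}\gmoll(\bar z_x)\,d\mubb(z).
\end{equation*}

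Next, apply Lemma \ref{lem:approxmeasx_unif_bounded} pointwise in $x$. This gives $\int_{\coec}\gmoll(\bar z_x)\,d\mubb(z)\le (2\pi x(1-x))^{-1/2}$ for every $\varepsilon>0$. Alternatively, one can compute directly that this quantity equals $\gamma_{\varepsilon+x(1-x)}(0)=(2\pi(\varepsilon+x(1-x)))^{-1/2}$, since it is the convolution of two centered Gaussian densities evaluated at zero.

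It follows that
\begin{equation*}
	\var{\vvmeaseps}\le C\int_0^1 \frac{dx}{\sqrt{2\pi x(1-x)}} = C\sqrt{\pi/2}<\infty,
\end{equation*}
because $\int_0^1 (x(1-x))^{-1/2}\,dx=\pi$. This bound is independent of $\varepsilon$, which is exactly the uniform boundedness w.r.t.\ $\var{\cdot}$. Uniform boundedness w.r.t.\ $\svar{\cdot}$ then follows from $\svar{\cdot}\le\var{\cdot}$.

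There is no serious obstacle. The only points to check are that $C$ does not depend on $\varepsilon$ and that the singularity of $(x(1-x))^{-1/2}$ at the endpoints is integrable, both of which are immediate. The restriction $\varepsilon\in(0,1]$ is not actually needed for this bound.
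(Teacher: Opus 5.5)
Your proof is correct and follows essentially the same route as the paper. The paper bounds $\sum_n \|\vvmeaseps(\cdot,B_n)\|_{\hno}$ over an arbitrary finite partition via \eqref{eqn:vvmeaseps_norm_est}, which amounts to \eqref{eqn:vvmeaseps_var_est} with $B=\ltwo$, and then finishes with the Tonelli plus Lemma~\ref{lem:approxmeasx_unif_bounded} bound from the last step of \eqref{eqn:vvmeaseps_modulus_est}, exactly as you do; your explicit constant $C\sqrt{\pi/2}$ is a small addition the paper does not state.
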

\begin{proof}
	Let $(B_n)$ a finite measurable partition of $\ltwo$. Using \eqref{eqn:vvmeaseps_norm_est} and considering the last estimate in \eqref{eqn:vvmeaseps_modulus_est}, we obtain
	\begin{align*}
		\sum_n \|\vvmeaseps(\cdot, B_n)\|_{\hno}
		&\leq C \sum_n \int_{B_n} \int_{(0,1)} \gmoll(\bar{z}_x) \,dx \,d\mubb(z)\\
		&= C \int_{\ltwo} \int_{(0,1)} \gmoll(\bar{z}_x) \, dx \, d\mubb(z) < \infty.
	\end{align*}
	Since the last bound is independent of the choice of $(B_n)$, the claim follows.
\end{proof}

In the following, we introduce a class of compact sets which are useful for analyzing uniform tightness of the considered families of vector-valued measures:
\begin{definition}[Sets of H\"{o}lder Continuous Functions]\label{def:hoelder_sets_def}
    For $n \in \N$, let
    \begin{equation*}
        \hc_n \coloneqq \Big\{ \psi \in C_0[0,1] \,\Big|\, \forall \, x,y \in [0,1]: \; | \psi(x)-\psi(y) | \leq n\, |x-y|^{1/n} \Big\},
    \end{equation*}
    and let $\hc \coloneqq \bigcup_{n>0} \hc_n$.
\end{definition}

\begin{lemma}[Properties of $\hc_n$ and $\hc$]\label{lem:hoelder_set_props}
	The following holds true:
	\begin{enumerate}
		\item For any $n\in\N$, the canonical embedding $\iota (\hc_n)$ is a compact subset of $\ltwo$. From now on, we identify $\hc_n$ with $\iota(\hc_n)$, and similarly for $\hc$.
		\item $\hc_n\in\borel(\ltwo)$ for all $n\in\N$, and $\hc\in\borel(\ltwo)$.
		\item $\hc_n \uparrow \hc$ for $n\to\infty$.
		\item $\hc$ is a $\mubb$-almost sure set.
	\end{enumerate}
\end{lemma}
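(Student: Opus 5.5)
We prove the four items in order; only (4) uses probabilistic input, the rest is classical analysis.

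For (1), we use Arzelà--Ascoli. Each $\psi\in\hc_n$ vanishes at $0$, so $|\psi(x)|\le n\,x^{1/n}\le n$. Hence $\hc_n$ is uniformly bounded and uniformly equicontinuous in $(\co,\|\cdot\|_\infty)$. It is also closed there, since the defining inequalities and the boundary conditions $\psi(0)=\psi(1)=0$ survive pointwise (hence uniform) limits. By Arzelà--Ascoli, $\hc_n$ is compact in $\co$. The canonical map $\iota:\co\to\ltwo$ is continuous because $\|\psi\|_{\ltwo}\le\|\psi\|_\infty$. The continuous image of a compact set is compact, so $\iota(\hc_n)$ is compact in $\ltwo$.

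For (2) and (3):
\begin{itemize}
\item Compact subsets of the metric space $\ltwo$ are closed, so $\hc_n\in\borel(\ltwo)$.
\item Consequently $\hc=\bigcup_n\hc_n$ is a countable union of Borel sets, hence Borel.
\item Monotonicity $\hc_n\subseteq\hc_{n+1}$ holds directly. If $|x-y|\le1$, then $|x-y|^{1/n}\le|x-y|^{1/(n+1)}$ and $n\le n+1$, so the H\"older bound with constant $n$ and exponent $1/n$ implies the one with $n+1$ and $1/(n+1)$.
\item Together with $\hc=\bigcup_n\hc_n$, this gives $\hc_n\uparrow\hc$.
\end{itemize}
Injectivity of $\iota$, since continuous functions are determined by their equivalence class, justifies the identification of $\hc_n$ with $\iota(\hc_n)$.

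For (4), recall $\mubb=\bb(\mu)$, where $\bb$ is the continuous-path Brownian bridge of Subsection \ref{sect:bb}. It therefore suffices to show that $\mu$-a.s. the path $x\mapsto\bb_x(\omega)$ is H\"older continuous of some exponent $\alpha>0$. The plan has three steps.
\begin{enumerate}
\item Since $\bb_x-\bb_y$ is centered Gaussian with variance at most $|x-y|$, we have $\E|\bb_x-\bb_y|^{2m}\le C_m|x-y|^m$ for every $m\in\N$.
\item The Kolmogorov--Chentsov theorem then yields that the continuous modification is locally $\alpha$-H\"older for every $\alpha<1/2$. On $[0,1]$ this gives a finite random constant $K(\omega)$ with $|\bb_x-\bb_y|\le K(\omega)|x-y|^{1/4}$.
\item Because $\bb$ already has continuous paths, it is indistinguishable from that modification, so the bound holds for $\bb$ itself.
\end{enumerate}
Now choose $n\ge\max\{4,K(\omega)\}$. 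Using $|x-y|\le1$ gives $K|x-y|^{1/4}\le n|x-y|^{1/n}$, so $\bb(\omega)\in\hc_n\subseteq\hc$. Hence $\mu(\bb^{-1}(\hc))=1$, i.e.\ $\mubb(\hc)=1$, where measurability is guaranteed by (2).

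The only mildly delicate point is bookkeeping between $\co$, $\coec$ and $\ltwo$ in (1), and making sure that Kolmogorov's criterion applies to the given continuous version rather than merely to some modification. Both are handled as indicated above, using Remark \ref{rem:embeddings_and_measurability}.
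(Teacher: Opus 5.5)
Your proof is correct and follows the paper's argument essentially step by step. For (1) you use Arzel\`a--Ascoli, closedness in $\co$ and continuity of $\iota$; for (2) closedness of compact sets together with a countable union; for (3) pointwise monotonicity of $n|x-y|^{1/n}$. The only difference is in (4): the paper cites a.s.\ H\"older continuity of the bridge and the identification of $\hc$ with all H\"older continuous functions in $\co$, whereas you derive the H\"older bound via Kolmogorov--Chentsov and explicitly place the path in $\hc_n$ for $n\ge\max\{4,K(\omega)\}$, which fills in exactly that cited step.
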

\begin{proof}
	For a given $n \in \N$, it is straightforward to check that the set $\hc_n$ is both uniformly bounded and uniformly equicontinuous. Therefore, the Arzel\`{a}--Ascoli theorem implies that $\hc_n$ is a relatively compact subset of $\co$ (equipped with the norm $\| \cdot \|_{\infty}$). It is even compact, since it is closed in $\co$: Let $(\psi_m)_{m\in\N}$ a sequence in $\hc_n$ converging to $\psi \in \co$, then
	\begin{align*}
		| \psi(x) - \psi(y) | &\leq |\psi(x) - \psi_m(x)| + |\psi_m(x) - \psi_m(y)| + |\psi_m(y) - \psi(y)| \\
		&\leq 2 \|\psi - \psi_m \|_{\infty} + n \, |x-y|^{1/n}
		\xrightarrow{m \to \infty} n \, |x-y|^{1/n},
	\end{align*}
	hence $\psi \in \hc_n$. Now, continuity of the canonical embedding $\iota: \co \hookrightarrow \ltwo$ yields (i). Since $(\ltwo, \| \cdot \|_{\ltwo})$ is Hausdorff, all compact sets are closed in $\ltwo$, which implies (ii). Further, (iii) holds, since the sequence $(\hc_n)_{n\in\N}$ is a monotonically increasing, i.e.\ $\hc_n \subseteq \hc_{n+1}$ for all $n\in\N$, which holds since $(n \, (\cdot)^{1/n})_{n\in\N}$ is a pointwise monotonically increasing sequence in $C[0,1]$. Finally, (iv) follows from
	\begin{equation*}
		\hc = \{ \psi \in \co \mid \exists \, C<\infty, \alpha >0:\; \psi \text{ H\"{o}lder cont.\ with const.\ $C$ and exponent $\alpha$} \},
	\end{equation*}
	and the fact that the paths of the standard Brownian bridge are $\mubb$-a.s.\ H\"{o}lder continuous, see \cite{GV18}, p.\ 341. This finalizes the proof.
\end{proof}

The following construction in Definition \ref{def:l_and_bb_pin} and the statement of Lemma \ref{lem:independence} adapt the arguments from the proof of \cite{Zam05}, Lemma 3.3, transitioning from the case of the Brownian motion to that of the Brownian bridge.

\begin{definition}\label{def:l_and_bb_pin}
	Let $x \in (0,1)$, and let $l^x \in \co$ be defined by
	\begin{equation*}
		l^x_{\xi} \coloneqq \frac{\xi}{x} \, \ind_{[0,x]}(\xi) + \frac{1-\xi}{1-x} \, \ind_{(x,1]}(\xi), \quad \xi \in [0,1].
	\end{equation*}
	Consequently, we define the stochastic process $\bb^{\pin,x} \coloneqq \big(\bb_{\xi} - \bb_x  l_{\xi}^x \big)_{\xi \in [0,1]}$. Note that the paths of $\bb^{\pin,x}$ are in $\co$, and we have $\bb^{\pin,x}_x = 0$ by construction.
\end{definition}

\begin{lemma}[An Integration Formula]\label{lem:independence}
	Let $x\in (0,1)$. Then it holds that $\bb_x$ and $\bb^{\pin,x}$ (as introduced in Definition \ref{def:l_and_bb_pin}) are independent. Further, the integration formula
    \begin{align}\label{eqn:partial_integral}
    	\Emubb\left[ \F(\bb) \psi (\bb_x) \right]
    	&= \int_{\R} \Emubb\left[ \F(\bb^{\pin,x}+y l^x)\right] \psi (y) \, \mathcal{N}(0,x(1-x))(dy) \nonumber\\
    	&= \int_{\R} \Emubb\left[ \F(\bb+(y-\bb_x) l^x)\right] \psi (y) \, \mathcal{N}(0,x(1-x))(dy)
    \end{align}
    holds for any $\psi \in C_b(\R)$ and any bounded Borel-measurable map $\F \colon \ltwo \to \R$.
\end{lemma}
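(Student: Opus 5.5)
The plan is to exploit Gaussianity. Work on $(\SchwartzDistribs,\borel,\mu)$, where $\bb$ is a centered Gaussian process. The first step is to check that $\bb^{\pin,x}_\xi$ is uncorrelated with $\bb_x$ for every $\xi\in[0,1]$, using $\mathrm{Cov}(\bb_\xi,\bb_x)=\xi\wedge x-\xi x$ and $\mathrm{Var}(\bb_x)=x(1-x)$. For $\xi\le x$,
\begin{equation*}
	\mathrm{Cov}(\bb^{\pin,x}_\xi,\bb_x) = \xi(1-x) - \frac{\xi}{x}\, x(1-x) = 0 .
\end{equation*}
For $\xi> x$ we get $x(1-\xi)-\frac{1-\xi}{1-x}\,x(1-x)=0$. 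In other words, $l^x_\xi=\mathrm{Cov}(\bb_\xi,\bb_x)/\mathrm{Var}(\bb_x)$, so $\bb^{\pin,x}$ is exactly the residual of the linear regression of $\bb$ on $\bb_x$.

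Since $(\bb_x,\bb^{\pin,x}_{\xi_1},\dots,\bb^{\pin,x}_{\xi_k})$ is jointly Gaussian (a linear image of the Gaussian vector $(\bb_x,\bb_{\xi_1},\dots,\bb_{\xi_k})$), zero covariance gives independence of $\bb_x$ from every finite-dimensional marginal of $\bb^{\pin,x}$. A $\pi$--$\lambda$ (monotone class) argument then upgrades this to independence of $\bb_x$ from the $\sigma$-algebra generated by $(\bb^{\pin,x}_\xi)_{\xi\in[0,1]}$. By \cite{Kal97}, Lemma 14.1 and Remark \ref{rem:embeddings_and_measurability}, this $\sigma$-algebra is the one generated by the $\co$-valued, and hence $\ltwo$-valued, random variable $\bb^{\pin,x}$. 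This proves the independence claim.

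For the integration formula, write $\bb=\bb^{\pin,x}+\bb_x l^x$ pathwise, which holds by definition. The map $(\omega',y)\mapsto \F(\omega'+yl^x)\psi(y)$ on $\ltwo\times\R$ is bounded and Borel, since $(z,y)\mapsto z+yl^x$ is continuous. By independence, the joint law of $(\bb^{\pin,x},\bb_x)$ is the product of the law of $\bb^{\pin,x}$ with $\mathcal{N}(0,x(1-x))$. Fubini then yields the first line of \eqref{eqn:partial_integral}. The second line is the same expression rewritten via $\bb^{\pin,x}=\bb-\bb_x l^x$.

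The only mildly delicate step is the measurability bookkeeping: identifying $\bb^{\pin,x}$ as a $\borel(\ltwo)$-measurable random variable whose generated $\sigma$-algebra is generated by its coordinates, so that finite-dimensional independence suffices. This is handled by the cited facts on $\co$-valued random variables together with continuity of the embedding $\iota$.
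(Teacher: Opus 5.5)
Your proposal is correct and takes essentially the same route as the paper, whose proof just defers to an adaptation of \cite{Zam05}, Lemma 3.3. That lemma rests on the same Gaussian regression argument: $\bb_x$ and $\bb^{\pin,x}$ have zero covariance, joint Gaussianity then gives independence, and Fubini on the product law yields the formula. Your write-up also supplies details the paper leaves implicit: the explicit covariance check identifying $l^x$ as the regression coefficient, the monotone-class extension, and the measurability of $(z,y)\mapsto \F(z+yl^x)\psi(y)$ on $\ltwo\times\R$.
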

\begin{proof}
	This follows by a straightforward adaption of \cite{Zam05}, Lemma 3.3.
\end{proof}

The next lemma is crucial for leveraging Lemma \ref{lem:independence} in the upcoming uniform tightness proofs.

\begin{lemma}[Roughness of Pinned Brownian Bridges]\label{lem:roughness_of_pinning}
    Let $k \in \N$, $r\in \R_{>0}$, $x\in (0,1)$, and $m(k,r,x) \in \N$ with $m(k,r,x) \geq k+\left(r+k\right) \left(\min\{x,1-x\}\right)^{-1}$. Then it holds for all $y \in B_r(0)$ that
    \begin{equation*}
        \{z \in \co \mid z+(y-z_x) \, l^x \in \hc_{m(k,r,x)}^c\} \subseteq \hc_k^c.
    \end{equation*}
	Note that the complements above can be seen in $\co$ as well as in $\ltwo$. Qualitatively speaking, this means that if the modified map $z+(y-z_x) \, l^x$ is very rough, then $z$ has to be of a certain roughness, too.
\end{lemma}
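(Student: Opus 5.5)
We prove the contrapositive. Fix $y\in B_r(0)$, write $m\coloneqq m(k,r,x)$, and let $z\in\co$ with $z\in\hc_k$. We show that $w\coloneqq z+(y-z_x)\,l^x\in\hc_m$. Since $l^x\in\co$, $w$ is again in $\co$, so only the Hölder-type estimate $|w(\xi)-w(\eta)|\le m|\xi-\eta|^{1/m}$ for all $\xi,\eta\in[0,1]$ has to be checked.

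\emph{Step 1: the size of the coefficient.} Since $z\in\hc_k$ and $z_0=0$, we have $|z_x|\le k\,x^{1/k}\le k$. Hence $|y-z_x|\le r+k$.

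\emph{Step 2: the Lipschitz constant of $l^x$.} The function $l^x$ is piecewise linear with slopes $1/x$ and $-1/(1-x)$. It is therefore Lipschitz with constant $L\coloneqq(\min\{x,1-x\})^{-1}$, and it takes values in $[0,1]$. In particular $|l^x_\xi-l^x_\eta|\le \min\{1,L|\xi-\eta|\}$.

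\emph{Step 3: converting to a Hölder bound with exponent $1/m$.} For $d\coloneqq|\xi-\eta|\in[0,1]$ and $m\ge k\ge1$ we have $d\le d^{1/m}$ and $d^{1/k}\le d^{1/m}$. Combining this with Steps 1 and 2 gives
\begin{equation*}
	|w(\xi)-w(\eta)|\le k\,d^{1/k}+(r+k)L\,d\le \big(k+(r+k)L\big)\,d^{1/m}\le m\,d^{1/m}.
\end{equation*}
The last inequality is exactly the assumption on $m(k,r,x)$. Thus $w\in\hc_m$, which proves the inclusion.

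Finally, the statement holds equally in $\co$ and in $\ltwo$. On $\coec$ the identification $\iota$ is a bijection. Moreover, $\ltwo\setminus\coec$ lies in both complements, because $z\notin\coec$ is excluded from every $\hc_n$, and the map $z\mapsto z+(y-z_x)l^x$ is only meaningful on $\coec$ (extend it, for instance, by the identity). So no step here is a genuine obstacle. The only care needed is in Step 3, where $d\le d^{1/m}$ and $d^{1/k}\le d^{1/m}$ are used for $d\le1$ so that the Lipschitz term can be absorbed into the Hölder constant.
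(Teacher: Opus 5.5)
Your proof is correct and follows essentially the same argument as the paper: contraposition, the bound $|y-z_x|\le r+k$ obtained from $z\in\hc_k$, the Lipschitz constant $(\min\{x,1-x\})^{-1}$ of $l^x$, and absorbing $d$ and $d^{1/k}$ into $d^{1/m}$ using $d\le 1$ and $m\ge k$. You spell out $|z_x|\le k\,x^{1/k}\le k$ and the check $w\in\co$, both of which the paper leaves implicit.
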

\begin{proof}
    We give a proof by contraposition: Let $z \in \hc_k$. Then it holds that\newline $z+(y-z_x)\,l^x \in \hc_{m(k,r,x)}$, since
    \begin{align*}
    	&|(z_{\xi}+(y-z_x)\, l_{\xi}^x)-(z_{\zeta}+(y-z_x)\, l_{\zeta}^x)|
    	= |(z_{\xi}-z_{\zeta})+(y-z_x) (l_{\xi}^x-l_{\zeta}^x)|\\
    	&\leq k|\xi-\zeta|^{1/k} + \left(|y|+|z_x|\right)  \left(\min\{x,1-x\}\right)^{-1}|\xi-\zeta|\\
    	&\leq \big(k+\left(r+k\right) \left(\min\{x,1-x\}\right)^{-1}\big)|\xi-\zeta|^{1/k}\\
    	&\leq m(k,r,x)\,|\xi-\zeta|^{1/m(k,r,x)}
    \end{align*}
	for all $\xi$, $\zeta \in [0,1]$, where the first inequality holds due to Lipschitz continuity of $l^x$ with the Lipschitz constant $(\min\{x,1-x\})^{-1}$. This finishes the proof.
\end{proof}

We are now fully equipped to show uniform tightness of the considered families of (vector) measures.

\begin{lemma}[Uniform Tightness of $(\approxmeasx)_{\varepsilon\in (0,1]}$]\label{lem:approxmeasx_unif_tight}
	Let $x \in (0,1)$. Then the family $(\approxmeasx)_{\varepsilon \in (0,1]}$ of finite measures on $\borel(\ltwo)$ is uniformly tight.
\end{lemma}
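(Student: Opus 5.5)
The plan is to use the compact sets $\hc_m$ from Definition \ref{def:hoelder_sets_def} as the compacts in \eqref{def:UnifTight}. Here $x$ plays the role of $a$ in $\approxmeasx$, so we write $\mu^{(x,\varepsilon)}(B)=\int_{B\cap\coec}\gmoll(\bar z_x)\,d\mubb(z)$. By Lemma \ref{lem:hoelder_set_props}(i), each $\hc_m$ is compact in $\ltwo$. Hence it suffices to show
\begin{equation*}
\sup_{\varepsilon\in(0,1]}\mu^{(x,\varepsilon)}(\hc_m^c)\xrightarrow{m\to\infty}0 .
\end{equation*}
The difficulty is that the density $\gmoll(\bar z_x)$ is not uniformly bounded in $\varepsilon$. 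So we cannot simply transfer $\mubb(\hc_m^c)\to0$. Instead we disintegrate with respect to the value $\bb_x$ via Lemma \ref{lem:independence}.

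Concretely, write $\mu^{(x,\varepsilon)}(\hc_m^c)=\Emubb[\ind_{\hc_m^c}(\bb)\gmoll(\bb_x)]$. Lemma \ref{lem:independence} requires $\psi\in C_b(\R)$, and $\gmoll$ is such a function; $\F=\ind_{\hc_m^c}$ is bounded and Borel by Lemma \ref{lem:hoelder_set_props}(ii). This gives
\begin{equation*}
\mu^{(x,\varepsilon)}(\hc_m^c)=\int_\R \Prob\big(\bb+(y-\bb_x)l^x\in\hc_m^c\big)\,\gmoll(y)\,\mathcal N(0,x(1-x))(dy).
\end{equation*}
Fix $k\in\N$ and $r>0$, and split the $y$-integral into $|y|<r$ and $|y|\ge r$.

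On $|y|<r$, take $m\ge m(k,r,x)$. Since $\hc_m\supseteq\hc_{m(k,r,x)}$, Lemma \ref{lem:roughness_of_pinning} gives, for $\mubb$-a.e. path (in $\co$), that the event $\{\bb+(y-\bb_x)l^x\in\hc_m^c\}$ is contained in $\{\bb\in\hc_k^c\}$. The probability is therefore at most $\mubb(\hc_k^c)$. Using the bound from Lemma \ref{lem:approxmeasx_unif_bounded}, $\int\gmoll\,d\mathcal N(0,x(1-x))\le(2\pi x(1-x))^{-1/2}$, this part is at most $(2\pi x(1-x))^{-1/2}\mubb(\hc_k^c)$, uniformly in $\varepsilon$.

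On $|y|\ge r$, bound the probability by $1$ and use the explicit Gaussian convolution:
\begin{equation*}
\int_{|y|\ge r}\gmoll(y)\,\gamma_{x(1-x)}(y)\,dy\le \sup_{|y|\ge r}\gamma_{x(1-x)}(y)=\gamma_{x(1-x)}(r).
\end{equation*}
This bound is uniform in $\varepsilon$ and tends to $0$ as $r\to\infty$.

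Given $\eta>0$, first choose $r$ so that the tail term is below $\eta/2$. Then, by Lemma \ref{lem:hoelder_set_props}(iii),(iv), $\mubb(\hc_k^c)\downarrow0$, so we can choose $k$ with $(2\pi x(1-x))^{-1/2}\mubb(\hc_k^c)<\eta/2$. Finally take $K=\hc_{m(k,r,x)}$. The main obstacle is the non-uniform density $\gmoll$. It is handled exactly by the pinning decomposition together with Lemma \ref{lem:roughness_of_pinning}, which converts roughness of the shifted path into roughness of the original bridge at a cost controlled by $r$ and $x$. The only care needed is the measurability of the events and the null sets outside $\coec$, which are routine.
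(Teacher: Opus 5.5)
Your proposal is correct and follows essentially the same route as the paper. You disintegrate $\approxmeasx(\hc_m^c)$ via Lemma \ref{lem:independence}, bound the tail $|y|\ge r$ by $\gamma_{a(1-a)}(r)$ uniformly in $\varepsilon$, and handle $|y|<r$ with Lemma \ref{lem:roughness_of_pinning} and $\mubb(\hc_k^c)\to 0$, choosing $r$, then $k$, then $m=m(k,r,a)$ in the same order as the paper.
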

\begin{proof}
	Note that it is sufficient to show that
	\begin{equation*}
		\forall \boundtight>0 \; \exists \; m\in\N \; \forall \varepsilon \in (0,1]: \quad \approxmeasx \left( \hc_m^c \right) < \boundtight.
	\end{equation*}
	To this end, we choose some $r\in\R_{>0}$ big enough s.th.\ $\int_{B_r(0)^c} \gmoll (y) \, \mathcal{N}(0,a(1-a))(dy) < \boundtight/2$ for all $\varepsilon \in (0,1]$. This is possible, since for all $\varepsilon>0$, H\"{o}lder's inequality yields
	\begin{multline}\label{eqn:tightness_approxmeasx_integral_est}
		\int_{B_r(0)^c} \gmoll (y) \, \mathcal{N}(0,a(1-a))(dy)
		= \int_{\R} \gmoll (y) \, \ind_{B_r(0)^c}(y) \, \gamma_{a(1-a)}(y) \, dy\\
		\leq \|\gmoll\|_{L^1(\R)} \| \ind_{B_r(0)^c} \, \gamma_{a(1-a)} \|_{\infty}
		= 1\cdot \gamma_{a(1-a)}(r) \xrightarrow[]{r \to \infty} 0,
	\end{multline}
	where the last upper bound is independent of $\varepsilon>0$.
	Subsequently, we choose some $k\in\N$ big enough s.th.\ $\mubb(\hc_k^c)\int_{-r}^r \gmoll (y) \, \mathcal{N}(0,a(1-a))(dy) < \boundtight/2$ for all $\varepsilon \in (0,1]$; this is achievable, since $\mubb(\hc_k^c) \xrightarrow{} 0$ as $k \to \infty$ and, in use of the Cauchy--Schwartz inequality, since $\int_{-r}^r \gmoll (y) \, \mathcal{N}(0,a(1-a))(dy) \leq 1\cdot (2\pi a(1-a))^{-1/2}$, where the bound on the RHS is independent of $\varepsilon\in(0,1]$. Lastly, we define the constant $m \coloneqq m(k,r,a) \in \N$ as seen in Lemma \ref{lem:roughness_of_pinning}. Now we can estimate
	\begin{align*}
		\approxmeasx(\hc_m^c) &= \int_{\ltwo} \ind_{\hc_m^c}(z) \, \gmoll(\ddexta(z)) \, d\mubb(z)\\
		&= \int_{\R} \Emubb\left[ \ind_{\hc_m^c}(\bb+(y-\bb_a) l)\right] \gmoll (y) \,\mathcal{N}(0,a(1-a))(dy)\\
		&\leq \int_{B_r(0)} \Emubb\left[ \ind_{\hc_k^c}(\bb)\right] \gmoll (y)\, \mathcal{N}(0,a(1-a))(dy)\\
		&\hphantom{123456789123}+ \int_{B_r(0)^c} \gmoll (y) \,\mathcal{N}(0,a(1-a))(dy) < \boundtight
	\end{align*}
	where the second equality follows from an application of Lemma \ref{lem:independence}, the subsequent inequality is a consequence of Lemma \ref{lem:roughness_of_pinning}, and the last inequality holds due to the choice of $r$ and $k$. This finishes the proof.
\end{proof}

\begin{corollary}\label{cor:UnifTight_sigma}
    The family $(\sigma_{\varepsilon})_{\varepsilon \in (0,1]} \subseteq \ca(\borel(\ltwo), \hno)$ is uniformly tight w.r.t.\ $\var{\cdot}$.
\end{corollary}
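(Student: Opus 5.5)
The statement concerns $(\sigma_{\varepsilon})$, which here stands for the family $(\vvmeasepsx)_{\varepsilon\in(0,1]}$. The plan is to reduce uniform tightness of this vector-measure family to the uniform tightness of the scalar family $(\approxmeasx)_{\varepsilon\in(0,1]}$ from Lemma \ref{lem:approxmeasx_unif_tight}. The link is the explicit formula for the variation in Remark \ref{rem:varmeas_of_vvmeasepsx}.

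First, I recall from \eqref{eqn:vvmeasepsx_var} that for every $\varepsilon>0$ and every $B\in\borel(\ltwo)$,
\begin{equation*}
	\var{\vvmeasepsx}(B) = \|\delta_a\|_{\hno}\,\approxmeasx(B).
\end{equation*}
Here $\|\delta_a\|_{\hno}<\infty$ by the Sobolev embedding of Subsection \ref{sect:misc_intro}, which gives $|\bar h_a|\le C\|h\|_{\ho}$. If one wants to double-check the remark directly, this identity follows from the factorization $\vvmeasepsx(\cdot,B)=\approxmeasx(B)\,\delta_a$. For any finite partition $(E_n)$ of $B$, one then has $\sum_n\|\vvmeasepsx(E_n)\|_{\hno}=\|\delta_a\|_{\hno}\sum_n\approxmeasx(E_n)=\|\delta_a\|_{\hno}\approxmeasx(B)$, since $\approxmeasx$ is a positive measure. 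The equality therefore holds for every partition, and in particular for the supremum.

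Next, I fix $\boundtight>0$ and apply Lemma \ref{lem:approxmeasx_unif_tight} with tolerance $\boundtight/(\|\delta_a\|_{\hno}+1)$, taking $x=a$. This produces a compact set $K\subseteq\ltwo$, concretely $K=\hc_m$ for suitable $m$, which is compact by Lemma \ref{lem:hoelder_set_props}(i). It satisfies $\approxmeasx(\ltwo\setminus K)<\boundtight/(\|\delta_a\|_{\hno}+1)$ for all $\varepsilon\in(0,1]$. Consequently,
\begin{equation*}
	\var{\vvmeasepsx}(\ltwo\setminus K) = \|\delta_a\|_{\hno}\,\approxmeasx(\ltwo\setminus K) < \boundtight
\end{equation*}
uniformly in $\varepsilon\in(0,1]$. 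This is exactly \eqref{def:UnifTight} with $\svar{\cdot}$ replaced by $\var{\cdot}$.

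There is no real obstacle here, since all the work sits in Lemma \ref{lem:approxmeasx_unif_tight}. The only point needing care is to confirm that the variation identity of Remark \ref{rem:varmeas_of_vvmeasepsx} is an equality of measures on all Borel sets, and not only on the whole space. The partition computation above settles this. It is also worth noting that, since $\svar{\cdot}\le\var{\cdot}$, uniform tightness w.r.t.\ $\svar{\cdot}$ follows at once, which is what Theorem \ref{thm:GenProkhorov} later requires.
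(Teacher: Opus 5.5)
Your proposal is correct and matches the paper's proof: the paper also obtains the corollary directly from the identity $\var{\vvmeasepsx}(\cdot)=\|\delta_a\|_{\hno}\,\approxmeasx(\cdot)$ in \eqref{eqn:vvmeasepsx_var}, combined with Lemma \ref{lem:approxmeasx_unif_tight}. Your explicit partition computation, which confirms that this identity holds on every Borel set, is a small but welcome addition to the paper's one-line argument.
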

\begin{proof}
	Considering \eqref{eqn:vvmeasepsx_var}, this is a direct consequence of Lemma \ref{lem:approxmeasx_unif_tight}.
\end{proof}

\begin{lemma}\label{lem:nu_uniftight}
    The family of vector-valued measures $(\nu_{\varepsilon})_{\varepsilon \in (0,1]}\subseteq \ca(\borel(\ltwo), \hno)$ is uniformly tight w.r.t.\ $\var{\cdot}$ and hence w.r.t.\ $\svar{\cdot}$.
\end{lemma}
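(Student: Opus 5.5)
Since $\svar{\cdot} \leq \var{\cdot}$, it suffices to prove uniform tightness w.r.t.\ $\var{\cdot}$. By \eqref{eqn:vvmeaseps_var_est}, for every $B\in\borel(\ltwo)$ we have
\begin{equation*}
	\var{\vvmeaseps}(B) \leq C \int_{(0,1)} \mu^{(x,\varepsilon)}(B) \, dx ,
\end{equation*}
where $\mu^{(x,\varepsilon)}$ is the measure $\approxmeasx$ with $a$ replaced by $x$, and we have used Tonelli's theorem. It therefore suffices to show that for every $\boundtight>0$ there is an $m\in\N$ with $\int_{(0,1)} \mu^{(x,\varepsilon)}(\hc_m^c)\,dx < \boundtight$ for all $\varepsilon\in(0,1]$. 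Since $\hc_m$ is compact by Lemma \ref{lem:hoelder_set_props}, this gives the claim.

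Lemma \ref{lem:approxmeasx_unif_tight} gives this for each fixed $x$, but the choice of $m=m(k,r,x)$ there blows up as $x$ approaches $0$ or $1$. I will therefore split $(0,1)$ into a boundary part and an interior part. \emph{Boundary part:} by Lemma \ref{lem:approxmeasx_unif_bounded}, $\mu^{(x,\varepsilon)}(\ltwo)\leq (2\pi x(1-x))^{-1/2}$. This bound is integrable on $(0,1)$ and independent of $\varepsilon$. Hence we can choose $\eta>0$ such that
\begin{equation*}
	\int_{(0,\eta)\cup(1-\eta,1)} (2\pi x(1-x))^{-1/2}\,dx < \boundtight/2 .
\end{equation*}
\emph{Interior part:} for $x\in[\eta,1-\eta]$ I redo the proof of Lemma \ref{lem:approxmeasx_unif_tight} with constants that are uniform in $x$. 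First choose $r$ such that $\gamma_{x(1-x)}(r)\leq \gamma_{\eta(1-\eta)}(r)$ is small enough, uniformly for $x$ in this range. Note that $x(1-x)\geq \eta(1-\eta)$, and once $r$ is large enough that $r^2$ dominates the variance, the Gaussian tail bound $\sup_{|y|\geq r}\gamma_{s}(y)$ is controlled uniformly in $s\in[\eta(1-\eta),1/4]$. Next choose $k$ such that
\begin{equation*}
	\mubb(\hc_k^c)\,(2\pi\eta(1-\eta))^{-1/2} < \boundtight/4 .
\end{equation*}
Finally take
\begin{equation*}
	m \geq k + (r+k)\,\eta^{-1} \geq \sup_{x\in[\eta,1-\eta]} m(k,r,x).
\end{equation*}
Because $\hc_{m'}\subseteq \hc_m$ for $m'\leq m$, Lemma \ref{lem:roughness_of_pinning} applies for every $x$ in the interior range with this single $m$. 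Then Lemma \ref{lem:independence} gives, exactly as in the proof of Lemma \ref{lem:approxmeasx_unif_tight}, that $\mu^{(x,\varepsilon)}(\hc_m^c) < \boundtight/2$ for all $x\in[\eta,1-\eta]$ and $\varepsilon\in(0,1]$. Integrating over $x\in(0,1)$ and combining with the boundary estimate gives $\int_{(0,1)}\mu^{(x,\varepsilon)}(\hc_m^c)\,dx < \boundtight$.

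The main obstacle is the interior uniformity, specifically making the Gaussian-tail choice of $r$ and the bound $\int_{-r}^r\gmoll\,d\mathcal{N}(0,x(1-x))\leq (2\pi x(1-x))^{-1/2}$ uniform over $x\in[\eta,1-\eta]$. Both hold because the variance $x(1-x)$ stays in the compact interval $[\eta(1-\eta),1/4]$. The boundary singularity is handled by the integrability of $(x(1-x))^{-1/2}$. One should also check the measurability of $x\mapsto\mu^{(x,\varepsilon)}(B)$ so that Tonelli applies; this follows as in Lemma \ref{lem:welldef_vvmeaseps}.
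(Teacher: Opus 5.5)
Your proposal is correct and follows essentially the same route as the paper. You cut off a boundary strip of $(0,1)$ using the integrability of $(2\pi x(1-x))^{-1/2}$, then on $[\eta,1-\eta]$ choose $r$, $k$ and a single $m\geq k+(r+k)\eta^{-1}$ uniformly in $x$ and apply Lemmas \ref{lem:independence} and \ref{lem:roughness_of_pinning}; integrating a pointwise-in-$x$ bound instead of bounding the $x$-integral directly makes no real difference. One small slip: for $r^2>1/4$ the map $s\mapsto\gamma_s(r)$ is increasing on $[\eta(1-\eta),1/4]$, so $\gamma_{x(1-x)}(r)\leq\gamma_{\eta(1-\eta)}(r)$ is backwards. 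Use $\gamma_{x(1-x)}(r)\leq(2\pi\eta(1-\eta))^{-1/2}e^{-2r^2}$ instead, as the paper does, which gives exactly the uniformity your next sentence asserts.
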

\begin{proof}
    This proof builds on the arguments of the uniform tightness proof for $(\approxmeasx)_{\varepsilon\in (0,1]}$, i.e.\ the proof of Lemma \ref{lem:approxmeasx_unif_tight}: Again, note that it is sufficient to prove that
    \begin{equation*}
        \forall \boundtight>0 \; \exists \, m\in\N \; \forall \varepsilon \in (0,1]: \quad \var{\vvmeaseps} \left( \hc_m^c \right) < \boundtight.
    \end{equation*}
	To this aim, we choose $C<\infty$ as in Subsection \ref{sect:misc_intro} and $\delta\in (0,\frac{1}{2})$ small enough s.th.
	\begin{equation*}
		C\int_{(0,1)\setminus [\delta,1-\delta]} \int_{\R} \gmoll(y) \, \mathcal{N}(0,x(1-x))(dy) \, dx
		\leq C\int_{(0,1)\setminus [\delta,1-\delta]} \frac{1}{\sqrt{2\pi x(1-x)}} \, dx
		< \boundtight/3.
	\end{equation*}
	Subsequently, we choose $r\in\R_{>0}$ big enough s.th.
	\begin{multline*}
		C\int_{[\delta,1-\delta]} \int_{B_r(0)^c} \gmoll(y) \, \mathcal{N}(0,x(1-x))(dy) \, dx
		\leq C (1-2\delta)  \sup_{x\in [\delta, 1-\delta]} \gamma_{x(1-x)}(r)\\
		\leq C (1-2\delta) \, \frac{1}{\sqrt{2\pi \delta(1-\delta)}}\exp\left(-2 r^2 \right) < \boundtight/3,
	\end{multline*}
	where the first inequality holds due to \eqref{eqn:tightness_approxmeasx_integral_est}. Further, we choose $k\in\N$ large enough s.th.
	\begin{align*}
		&C\int_{[\delta,1-\delta]} \int_{B_r(0)} \Emubb\left[ \ind_{\hc_k^c}(\bb) \right] \gmoll(y) \, \mathcal{N}(0,x(1-x))(dy) \, dx\\
		&\leq C\mubb(\hc_k^c) \int_{(0,1)} \frac{1}{\sqrt{2\pi x(1-x)}} \, dx < \boundtight/3.
	\end{align*}
	Lastly, we choose $m\in\N$ s.th.\ $m \geq \sup \left(k+(r+k)(\min\{x,1-x\})^{-1}\right)
	= k+(r+k)\cdot\frac{1}{\delta} < \infty$, where the supremum is taken over all $x\in [\delta, 1-\delta]$. These variable choices lead to the estimate
	\begin{align*}
		\var{\vvmeaseps} \left( \hc_m^c \right)
		&\leq C \int_{\ltwo} \ind_{\hc_m^c}(z) \int_{(0,1)} \gmoll(\ddext(z)) \, dx \, d\mubb(z)\\
		&= C\int_{(0,1)} \Emubb\left[ \ind_{\hc_m^c}(\bb) \, \gmoll(\bb_x) \right]  dx\\
		&= C\int_{(0,1)} \int_{\R} \Emubb\left[ \ind_{\hc_m^c}(\bb+(y-\bb_x) \, l^x)\right] \gmoll(y) \, \mathcal{N}(0,x(1-x))(dy) \, dx\\
		&\leq C\int_{(0,1)\setminus [\delta,1-\delta]} \int_{\R} \gmoll(y) \, \mathcal{N}(0,x(1-x))(dy) \, dx\\
		&\qquad + C\int_{[\delta,1-\delta]} \int_{B_r(0)^c} \gmoll(y) \, \mathcal{N}(0,x(1-x))(dy) \, dx\\
		&\qquad + C\int_{[\delta,1-\delta]} \int_{B_r(0)} \Emubb\left[ \ind_{\hc_k^c}(\bb) \right] \gmoll(y) \, \mathcal{N}(0,x(1-x))(dy) \, dx\\
		&< \boundtight/3 + \boundtight/3 + \boundtight/3 = \boundtight,
	\end{align*}
	where the first line follows from \eqref{eqn:vvmeaseps_var_est}, and the second line follows from an application of Fubini's theorem, justified by the measurability and boundedness of the considered integrand. Further, the third and fourth lines follow from Lemma \ref{lem:independence} and Lemma \ref{lem:roughness_of_pinning}, respectively, and the final line from the preceding variable choices. This establishes the claim.
\end{proof}

\begin{theorem}\label{thm:weak_sequential_compactness}
	The families of (vector) measures $(\approxmeasx)_{\varepsilon\in (0,1]}$, $(\vvmeasepsx)_{\varepsilon >0}$ and $(\nu_{\varepsilon})_{\varepsilon \in (0,1]}$ (and all non-empty subfamilies) are weak sequentially compact.
	
	Further, all weak limits of sequences in $(\approxmeasx)_{\varepsilon\in (0,1]}$, $(\vvmeasepsx)_{\varepsilon >0}$ and $(\nu_{\varepsilon})_{\varepsilon \in (0,1]}$ are bounded by $(2\pi a(1-a))^{-1/2}$, bounded by $(2\pi a(1-a))^{-1/2}$ w.r.t.\ $\var{\cdot}$ and bounded w.r.t.\ $\var{\cdot}$, respectively.
\end{theorem}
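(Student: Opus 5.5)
The plan is to apply the generalized Prokhorov theorem (Theorem \ref{thm:GenProkhorov}) with $X=\ltwo$, which is a metric space, and $B=\hno$, a Hilbert space and hence reflexive, for the two vector-valued families. For $(\approxmeasx)_{\varepsilon\in(0,1]}$ we use $B=\R$, where it reduces to the classical Prokhorov theorem for finite measures.

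\emph{Compactness of the three families.} For $(\vvmeasepsx)$, uniform boundedness w.r.t.\ $\svar{\cdot}$ is Corollary \ref{cor:vvmeasepsx_unif_bounded}. Uniform tightness w.r.t.\ $\var{\cdot}$, and hence w.r.t.\ $\svar{\cdot}$ since $\svar{V}\le\var{V}$, is Corollary \ref{cor:UnifTight_sigma}.

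\begin{itemize}
\item The parameter range is a small issue. Corollary \ref{cor:UnifTight_sigma} is stated for $\varepsilon\in(0,1]$, while the theorem allows all $\varepsilon>0$. For $\varepsilon>1$ I would rerun Lemma \ref{lem:approxmeasx_unif_tight}. The only $\varepsilon$-dependent inputs there are $\|\gmoll\|_{L^1}=1$ and the bound $\int\gmoll\,\gamma_{a(1-a)}\,dy\le(2\pi a(1-a))^{-1/2}$, and both hold for every $\varepsilon>0$. So the tightness estimate is uniform over all $\varepsilon>0$.
\item For $(\nu_\varepsilon)$, the two hypotheses are Lemma \ref{lem:nu_unifbounded} and Lemma \ref{lem:nu_uniftight}.
\item For $(\approxmeasx)$, they are Lemma \ref{lem:approxmeasx_unif_bounded} and Lemma \ref{lem:approxmeasx_unif_tight}. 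Note that for a positive real measure $\svar{\cdot}=\var{\cdot}$ equals the total mass.
\item Any non-empty subfamily inherits both hypotheses, so it is weakly sequentially compact as well.
\end{itemize}

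\emph{Bounds on the weak limits.} For the scalar family, testing weak convergence with $f\equiv1\in C_b(\ltwo)$ gives
\[
\mu^{(a)}(\ltwo)=\lim_n\mu^{(a,\varepsilon_n)}(\ltwo)\le(2\pi a(1-a))^{-1/2},
\]
and positivity of the limit follows from testing with $f\ge0$.

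For the vector families I would invoke Lemma \ref{lem:vecmeas_variationformula}, i.e.\ that the variation is lower semicontinuous under weak convergence of uniformly $\var{\cdot}$-bounded sequences, in the quantitative form $\var{V}\le\liminf_n\var{V_n}$. If only finiteness of $\var{V}$ were available, I would prove the inequality directly via the dual representation
\[
\var{V}=\sup\Big\{\sum_i\Big\langle \int f_i\,dV,\,\varphi_i\Big\rangle\Big\},
\]
where the supremum is over finitely many $f_i\in C_b(\ltwo)$ with disjoint supports, $0\le f_i$, $\sum_i f_i\le1$, and over $\varphi_i\in\ho$ with $\|\varphi_i\|\le1$. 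Each such expression passes to the weak limit and is bounded by $\var{V_n}$ along the sequence. Approximating Borel partitions by such continuous partitions of unity works because $\var{V}$ is a finite Radon measure on the Polish space $\ltwo$.

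With this inequality, Remark \ref{rem:varmeas_of_vvmeasepsx} gives the bound for the limit of $(\vvmeasepsx)$:
\[
\var{\vvmeasx}\le\liminf_n\|\delta_a\|_{\hno}\,\mu^{(a,\varepsilon_n)}(\ltwo)\le\|\delta_a\|_{\hno}\,(2\pi a(1-a))^{-1/2}.
\]
This is the claimed bound up to the factor $\|\delta_a\|_{\hno}$, which I would either absorb or show is at most $1$ under the chosen $\ho$-norm. For $(\nu_\varepsilon)$, Lemma \ref{lem:nu_unifbounded} gives the uniform bound $C\int_0^1(2\pi x(1-x))^{-1/2}\,dx$, which then transfers to the limit.

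\emph{Main obstacle.} The hard step is this lower-semicontinuity and approximation argument for the variation. Everything else is a direct citation of earlier results.
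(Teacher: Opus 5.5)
Your proposal is correct and follows the paper's proof: cite the uniform boundedness and tightness results, apply the classical Prokhorov theorem to $(\approxmeasx)$ and Theorem~\ref{thm:GenProkhorov} (with $\hno$ reflexive) to the vector-valued families, and transfer the bounds to the weak limits via Lemma~\ref{lem:vecmeas_variationformula}~(ii). Your two side remarks refine points the paper glosses over and are sound. The tightness estimate is indeed uniform in all $\varepsilon>0$, and the flagged factor is harmless, since $\bar h_a=(h',q_a)_{\ltwo}$ for $h\in\ho$ gives $\|\delta_a\|_{\hno}\le\sqrt{a(1-a)}\le\tfrac12$.
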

\begin{proof}
	The considered families of (vector) measures $(\approxmeasx)_{\varepsilon\in (0,1]}$, $(\vvmeasepsx)_{\varepsilon >0}$ and $(\nu_{\varepsilon})_{\varepsilon \in (0,1]}$ are uniformly bounded, see Lemma \ref{lem:approxmeasx_unif_bounded}, Corollary \ref{cor:vvmeasepsx_unif_bounded} and Lemma \ref{lem:nu_unifbounded}, respectively, and tight, see Lemma \ref{lem:approxmeasx_unif_tight}, Corollary \ref{cor:UnifTight_sigma} and Lemma \ref{lem:nu_uniftight}, respectively. Hence the claim follows by an application of Prokhorov's theorem for finite measures, see e.g.\cite{Kle08}, Theorem 13.29, or its analogue for vector measures, i.e.\ Theorem \ref{thm:GenProkhorov} together with Lemma \ref{lem:vecmeas_variationformula} (ii).
\end{proof}

The previous theorem implies that the sequence $(\mu^{(a,1/n)})_{n\in\N}$ of finite measures on $(\ltwo, \borel(\ltwo))$ admits a weakly convergent subsequence. From now on, we fix such a subsequence $(\mu^{(a,\varepsilon_{n,a})})_{n\in\N}$ and denote its weak limit by $\lmeasx$, i.e.
\begin{equation*}
	\lmeasx \xleftarrow{\;w\;} \mu^{(a,\varepsilon_{n,a})} \subseteq (\mu^{(a,1/n)})_{n\in\N}.
\end{equation*}
Note that $\lmeasx$ is a finite measure on $(\ltwo, \borel(\ltwo))$ which is bounded by $(2\pi a(1-a))^{-1/2}$. Likewise, by the previous theorem, the sequence $(\vvmeas^{1/n})_{n\in\N} \subseteq \ca(\borel(\ltwo),\hno)$ admits a weakly convergent subsequence. We fix such a subsequence $(\vvmeas^{\varepsilon'_n})_{n\in\N}$ and write $\vvmeas\in\ca(\borel(\ltwo),\hno)$ for its weak limit, i.e.\
\begin{equation*}
	\vvmeas \xleftarrow{\;w\;} \vvmeas^{\varepsilon'_n} \subseteq (\vvmeas^{1/n})_{n\in\N}.
\end{equation*}
It holds that $\vvmeas$ is bounded w.r.t.\ $\var{\cdot}$. Note that $\lmeasx$ and $\vvmeas$ might a priori depend on the choice of the corresponding weakly convergent subsequence.

\medskip

In the case of $(\vvmeasepsx)_{\varepsilon >0}$, we proceed a bit differently, in order to obtain a weak limit which can be expressed in terms of $\lmeasx$. Namely, we define the vector measure $\vvmeasx \in \ca(\borel(\ltwo),\hno)$ via
\begin{equation}\label{def:vvmeasx}
	\vvmeasx(h,B) \coloneqq \int_{B\cap \coec} \bar{h}_a \, d\lmeasx(z) = \lmeasx(B) \, \delta_a(h)
\end{equation}
for all $h\in\ho$ and $B\in\borel(\ltwo)$. Here, well-definedness follows by the same arguments as for well-definedness of $\vvmeasepsx$, see \eqref{def:vvmeasepsx} and the subsequent paragraph.

\begin{lemma}\label{lem:vvmeasx_weak_conv}
	The sequence of vector measures $(\sigma^{(a,\varepsilon_{n,a})})_{n\in\N} \subseteq \ca(\borel(\ltwo),\hno)$ converges weakly to $\vvmeasx$, i.e.
	\begin{equation*}
		\vvmeasx \xleftarrow{\;w\;} \sigma^{(a,\varepsilon_{n,a})} \subseteq (\sigma^{(a,1/n)})_{n\in\N}.
	\end{equation*}
	Moreover, for every $h\in\ho$, it holds
	\begin{equation}\label{eqn:sigma_mu_connection}
		\var{\vvmeasx(h,\cdot)}(\cdot) = |\bar{h}_x|\,\lmeasx (\cdot) \quad \text{and} \quad \var{\vvmeasx}(\cdot) = \|\delta_x\|_{\hno} \,\lmeasx(\cdot),
	\end{equation}
	which implies that $\vvmeasx$ is bounded w.r.t.\ $\var{\cdot}$ by $(2\pi a(1-a))^{-1/2}$.
\end{lemma}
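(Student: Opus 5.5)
We reduce everything to the weak convergence $\mu^{(a,\varepsilon_{n,a})}\to\lmeasx$ of finite scalar measures, using that both $\vvmeasepsx$ and $\vvmeasx$ factor as a scalar measure times the fixed vector $\delta_a\in\hno$.

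\textbf{Weak convergence.} Fix $f\in C_b(\ltwo)$ and $\varphi\in(\hno)'$. By reflexivity (Riesz), $\varphi$ is evaluation at some $h_\varphi\in\ho$. First we identify the vector integrals. For a simple function $f=\sum_k c_k\ind_{E_k}$ we have
\begin{equation*}
	\int_{\ltwo} f\,d\vvmeasepsx=\sum_k c_k\,\approxmeasx(E_k)\,\delta_a=\Big(\int_{\ltwo} f\,d\approxmeasx\Big)\delta_a .
\end{equation*}
The same identity holds for $\vvmeasx$ with $\lmeasx$ in place of $\approxmeasx$. It passes to uniform limits, since both sides are continuous w.r.t.\ $\|\cdot\|_\infty$; here we use the bounds $\svar{\cdot}\le\var{\cdot}=\|\delta_a\|_{\hno}\,\approxmeasx(\ltwo)$ from Remark \ref{rem:varmeas_of_vvmeasepsx}, and the analogue for $\vvmeasx$. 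Hence
\begin{equation*}
	\varphi\Big(\int f\,d\sigma^{(a,\varepsilon_{n,a})}\Big)=\bar h_{\varphi,a}\int f\,d\mu^{(a,\varepsilon_{n,a})}\longrightarrow \bar h_{\varphi,a}\int f\,d\lmeasx=\varphi\Big(\int f\,d\vvmeasx\Big),
\end{equation*}
by the choice of the subsequence $(\varepsilon_{n,a})$. One subtlety: \eqref{def:vvmeasx} integrates over $B\cap\coec$ rather than $B$. We will note that $\lmeasx(\coec^c)=0$ is either not needed or should be checked. By Lemma \ref{lem:approxmeasx_unif_tight} combined with Lemma \ref{lem:hoelder_set_props}, $\lmeasx$ is concentrated on the closed sets $\hc_m$ up to arbitrarily small mass, by the portmanteau theorem applied to the open sets $\hc_m^c$. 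Since $\hc\subset\coec$, this gives $\lmeasx(\coec^c)=0$. Thus $\vvmeasx(h,B)=\lmeasx(B)\bar h_a$ for all $B$, consistent with the displayed identity.

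\textbf{Variation formulas.} For fixed $h$, the signed measure $\vvmeasx(h,\cdot)=\bar h_a\lmeasx$ is a constant multiple of a positive measure. Its total variation is therefore $|\bar h_a|\lmeasx$; we read $\bar h_x$ in \eqref{eqn:sigma_mu_connection} as $\bar h_a$. For the vector variation, take any finite partition $(E_n)$ of $E$. Then
\begin{equation*}
	\sum_n\|\vvmeasx(\cdot,E_n)\|_{\hno}=\|\delta_a\|_{\hno}\sum_n\lmeasx(E_n)=\|\delta_a\|_{\hno}\lmeasx(E),
\end{equation*}
independently of the partition. This yields $\var{\vvmeasx}=\|\delta_a\|_{\hno}\lmeasx$, exactly as in \eqref{eqn:vvmeasepsx_var}.

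\textbf{Bound.} For the final bound, we use $\lmeasx(\ltwo)=\lim_n\mu^{(a,\varepsilon_{n,a})}(\ltwo)\le(2\pi a(1-a))^{-1/2}$, obtained by testing weak convergence with $f\equiv1$ and applying Lemma \ref{lem:approxmeasx_unif_bounded}. This gives $\var{\vvmeasx}\le\|\delta_a\|_{\hno}(2\pi a(1-a))^{-1/2}$. The stated bound holds literally if $\|\delta_a\|_{\hno}\le 1$ in the chosen norm; otherwise it holds up to this factor, in line with Theorem \ref{thm:weak_sequential_compactness}.

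The only genuinely delicate step is the identification of the integral against the vector measure with $\delta_a$ times the scalar integral. This is routine by density of simple functions.
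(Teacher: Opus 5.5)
Your proposal is correct and is essentially the paper's argument. Both reduce weak convergence of $\sigma^{(a,\varepsilon_{n,a})}$ to the scalar weak convergence $\mu^{(a,\varepsilon_{n,a})}\to\lmeasx$ via the factorization through the fixed vector $\delta_a$, and both read off the variation formulas as in Remark \ref{rem:varmeas_of_vvmeasepsx}; your portmanteau check that $\lmeasx(\coec^c)=0$ usefully justifies the equality in \eqref{def:vvmeasx}, which the paper leaves implicit. Your hedge on the final constant can be dropped: $\bar h_a=\int_0^1 h'(s)\,q_a(s)\,ds$ gives $|\bar h_a|\le\sqrt{a(1-a)}\,\|h'\|_{\ltwo}\le\sqrt{a(1-a)}\,\|h\|_{\ho}$, hence $\|\delta_a\|_{\hno}\le 1/2<1$ and the stated bound holds literally.
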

\begin{proof}
	Weak convergence follows due to
	\begin{align}\label{eqn:weak_lim_of_vvmeasepsx}
		\Big\langle h, &\int_{\ltwo} F \,d\sigma^{(a,\varepsilon_{n,x})} \Big\rangle
		= \int_{\ltwo} F \,d\sigma^{(a,\varepsilon_{n,x})}(h,\cdot)
		= \bar{h}_a \int_{\ltwo} F \,d\mu^{(a,\varepsilon_{n,x})}\nonumber\\
		&\xrightarrow{n\to\infty} \bar{h}_a \int_{\ltwo} F \,d\lmeasx
		= \Big\langle h, \int_{\ltwo} F \,d\vvmeasx \Big\rangle \quad \text{for all } h\in\ho, \,F\in C_b(\ltwo).
	\end{align}
	The remaining statements follow by arguing as in Remark \ref{rem:varmeas_of_vvmeasepsx}.
\end{proof}

\subsection{A Solution to the Representation Problem and Its Immediate Consequences}

\begin{definition}\label{def:polar_decomp}
	For any $V\in\ca(\borel(\ltwo),\hno)$, there exists a $\var{V}$-measurable function $n_V:\ltwo\to \hno$ with $\|n_V\|_Y=1$ $\var{V}$-a.s.\ such that $V=n_V \var{V}$, see e.g.\ \cite{AMMP10}, Subsection 2.1. We refer to the function $n_V$ as a \emph{unit field}, and we call this decomposition the \emph{polar decomposition} of $V$.
\end{definition}

\begin{remark}
	By \eqref{def:vvmeasx} and Definition \ref{def:polar_decomp}, it holds $\ufd{\vvmeasx} = \hat{\delta}_a \coloneqq \|\delta_a\|_{\hno}^{-1} \, \delta_a \in \hno$ (constant) and $\var{\vvmeasx}=\|\delta_a\|_{\hno} \, \lmeasx$.
\end{remark}

Now, we are ready to prove a central theorem of this paper.
\begin{theorem}[Representations of the RHSs of the IbPFs]\label{lem:gen_ibp}
    For all $l \in \domL$ and $\F \in C_b^1(\ltwo)$ the following two equations hold
    \begin{align}
        \llangle \F(\bb), \bar{l}_a\DDa\rrangle &= \int_{\ltwo} \F \langle l,\ufd{\vvmeasx}\rangle \,d\var{\vvmeasx}\label{gen:rho_x},\\
        \bllangle \F(\bb), \int_{[0,1]} \bar{l}_x\DDx dx \brrangle &= \int_{\ltwo} \F \langle l,\ufd{\vvmeas}\rangle \,d\var{\vvmeas}\label{gen:rho}.
    \end{align}
	Here, the LHSs are interpreted as discussed in Theorem \ref{thm:ibpf}. Moreover, for all elements $l\in \domL$ and $G \in (C_b^1)_{H_0^1\cap H^2}$, the two equations
    \begin{align}
        \int_{\ltwo} D^*G \densa \, d\mubb &= \int_{\ltwo} \langle G,\ufd{\vvmeasx}\rangle \,d\var{\vvmeasx}\label{gen:ibp_x},\\
        \int_{\ltwo} D^*G \dens \, d\mubb &= \int_{\ltwo} \langle G,\ufd{\vvmeas}\rangle \,d\var{\vvmeas}\label{gen:ibp}
    \end{align}
    hold. Using these results, \cite{RZZ12}, Theorem 3.1(iii), directly implies that $\densa,\, \dens \in \mathrm{BV}(\ltwo, \ho)$, $V(\densa)=\var{\vvmeasx}(\ltwo)$ and $V(\dens)=\var{\vvmeas}(\ltwo)$.
\end{theorem}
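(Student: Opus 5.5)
The strategy is to pass to the limit $\varepsilon\downarrow 0$ in the smoothed IbPFs of Lemma \ref{lem:smoothedIbPF}, identifying the limit once through the Hida pairing and once through the weak limits of the approximating vector measures. Fix $l\in\domL$ and $\F\in\cbone$ and consider \eqref{gen:rho_x}. Along the subsequence $\varepsilon_{n,a}$, the RHS of \eqref{smoothedIbPF} can be written as
\begin{equation*}
	\llangle \F(\bb), \bar{l}_a\gmoll(\bb_a)\rrangle = \int_{\ltwo}\F(z)\,\bar{l}_a\gmoll(\bar{z}_a)\,d\mubb(z) = \Big\langle l, \int_{\ltwo}\F\,d\sigma^{(a,\varepsilon_{n,a})}\Big\rangle .
\end{equation*}
The first equality is the image-measure identity $\mubb=\bb(\mu)$; the second is the definition of $\vvmeasepsx$ together with $\varphi(\int F\,dV)=\int F\,d\varphi(V)$ for $\varphi=\langle l,\cdot\rangle$. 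Since $\F\in C_b(\ltwo)$ and $l\in\ho$, Lemma \ref{lem:vvmeasx_weak_conv} shows that the right-hand side converges to $\langle l,\int_{\ltwo}\F\,d\vvmeasx\rangle$. On the other hand, the proof of Theorem \ref{thm:ibpf} gives convergence of the left-hand side in $W^{-1,2}(\ltwo,\mubb)$ to the extended functional $\llangle \F(\bb), \bar{l}_a\DDa\rrangle$. Equating the two limits yields \eqref{gen:rho_x}. It then remains to rewrite $\langle l,\int \F\,d\vvmeasx\rangle=\int \F\langle l,\ufd{\vvmeasx}\rangle\,d\var{\vvmeasx}$ using the polar decomposition of Definition \ref{def:polar_decomp}. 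For bounded measurable $\F$ this follows by approximating with simple functions, since $\var{\vvmeasx}$ is finite and $\ufd{\vvmeasx}$ is bounded.

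The argument for \eqref{gen:rho} is the same, with $\vvmeaseps$ along $\varepsilon'_n$ and the Bochner-integral identity $\int_{(0,1)}\bar{l}_x\gmoll(\bb_x)\,dx$. The pairing becomes $\int\F(z)\int_{(0,1)}\bar l_x\gmoll(\bar z_x)\,dx\,d\mubb = \langle l,\int \F\,d\vvmeaseps\rangle$, which is justified by Fubini. The $W^{-1,2}$ convergence of the left-hand side was also established in Theorem \ref{thm:ibpf}, via $\denseps\to\dens$ in $L^p$. The main subtlety in this step is that the $W^{-1,2}$-convergence is uniform on the unit ball, so it applies to every fixed $\F\in\cbone$ along any subsequence. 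Consequently the limit does not depend on the chosen subsequence, and this in fact shows that $\int\F\,d\vvmeas$ is independent of that choice.

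For \eqref{gen:ibp_x} and \eqref{gen:ibp}, take $G=\sum_j g_j l^j$. By \eqref{eqn:D*on_class_L2vald_fcts} and $2Al^j=-(l^j)''$, we have $D^*G=\sum_j(-\partial_{l^j}g_j-g_j((l^j)'',\cdot)_{\ltwo})$. The IbPF \eqref{IbPF:rho_x} for $\F=g_j\in\cbone$ (Theorem \ref{thm:ibpf}) then gives
\begin{equation*}
	\int D^*G\,\densa\,d\mubb=\sum_j\llangle g_j(\bb),\bar{l}^j_a\DDa\rrangle=\sum_j\int g_j\langle l^j,\ufd{\vvmeasx}\rangle\,d\var{\vvmeasx}=\int\langle G,\ufd{\vvmeasx}\rangle\,d\var{\vvmeasx},
\end{equation*}
where the middle step uses \eqref{gen:rho_x}. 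The sign in front of the distributional term in \eqref{IbPF:rho_x} must be tracked carefully against the sign convention in $D^*$; I expect the signs to cancel exactly. The same computation with $\dens$ gives \eqref{gen:ibp}.

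Finally, $\densa,\dens$ are bounded, so they lie in $L(\log L)^{1/2}$. The identities \eqref{gen:ibp_x} and \eqref{gen:ibp}, combined with the finite variation of $\vvmeasx$ and $\vvmeas$ (Lemma \ref{lem:vvmeasx_weak_conv}, Theorem \ref{thm:weak_sequential_compactness}), are exactly the hypothesis of \cite{RZZ12}, Theorem 3.1(iii). That theorem yields $\densa,\dens\in\mathrm{BV}(\ltwo,\ho)$ together with the stated identification of $V(\cdot)$ with the total variations.
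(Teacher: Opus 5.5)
Your proof is correct. The overall strategy matches the paper's. You identify the $\varepsilon\downarrow 0$ limit of the smoothed pairings in two ways: once via weak convergence of $\sigma^{(a,\varepsilon_{n,a})}$, resp.\ $\vvmeas^{\varepsilon'_n}$, and once via the Donsker-delta side. You then obtain \eqref{gen:ibp_x} and \eqref{gen:ibp} from \eqref{eqn:D*on_class_L2vald_fcts} and the IbPFs; the signs do cancel as you expect.

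You differ from the paper in how you reach a general $\F\in\cbone$.

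The paper first proves \eqref{gen:rho_x} only for $\F\in\exp(C^{\infty})$, using $\F(\bb)\in\HidaFcts$ and the Hida-space convergence $\gmoll(\bb_a)\to\DDa$. It then extends by density of $\exp(C^{\infty})$ in $W^{1,2}(\ltwo,\mubb)$. This extension needs a separate argument that $\Psi\mapsto\int_{\ltwo}\Psi\,\langle l,\ufd{\vvmeasx}\rangle\,d\var{\vvmeasx}$ is $W^{1,2}$-continuous on $\cbone$. The paper gets this by bounding the smoothed functionals via \eqref{smoothedIbPF} uniformly in $n$ and passing the bound through the weak limit.

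You instead plug each fixed $\F\in\cbone$ directly into the $W^{-1,2}(\ltwo,\mubb)$-convergence established in the proof of Theorem \ref{thm:ibpf}. This is legitimate for three reasons:
\begin{itemize}
\item the smoothed functionals are $L^2$- and hence $W^{1,2}$-continuous;
\item the limit functional is their BLT extension;
\item the convergence is uniform on the unit ball.
\end{itemize}

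Your route buys three things:
\begin{itemize}
\item It merges the paper's two steps into one.
\item It makes the separate continuity estimate unnecessary, since that estimate follows a posteriori from the identity.
\item It yields the subsequence-independence you point out. This even extends to the measures: finite signed measures on $\ltwo$ are determined by integrals of $\cos((\eta,\cdot)_{\ltwo})$ and $\sin((\eta,\cdot)_{\ltwo})$. So $\vvmeas$ and $\lmeasx$ themselves do not depend on the chosen subsequence, which the paper leaves open.
\end{itemize}

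The only cost is that you lean on the $W^{-1,2}$-convergence part of Theorem \ref{thm:ibpf}, including its $\dens$-version, which the paper only sketches. The paper's first step, by contrast, needs nothing beyond the Hida-space convergence.
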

\begin{proof}
    We first prove \eqref{gen:rho_x} for $l \in \ho$ and $\F \in \exp(C^{\infty}) \subseteq C_b(\ltwo)$: In this case, an application of Lemma \ref{lem:vvmeasx_weak_conv} yields
    \begin{align*}
        \llangle \F(\bb), \bar{l}_a\gamma_{\varepsilon_{n,a}}(\bb_a) \rrangle
        &= \int_{\ltwo} \F \, \bar{l}_a \gamma_{\varepsilon_{n,a}}(\ddexta(\cdot)) \, d\mubb
        = \Big\langle l, \int_{\ltwo} \F\, d\sigma^{(a,\varepsilon_{n,a})} \Big\rangle \\
        &\xrightarrow[n\to\infty]{} \Big\langle l, \int_{\ltwo} \F \,d\vvmeasx \Big\rangle
        = \int_{\ltwo} \F \langle l,\ufd{\vvmeasx}\rangle \,d\var{\vvmeasx},
    \end{align*}
	where the first equality follows by application of the transformation theorem for measures. Simultaneously, it holds that
    \begin{equation*}
    	\llangle \F(\bb), \bar{l}_a\gamma_{\varepsilon_{n,a}}(\bb_a) \rrangle \xrightarrow{n \to \infty} \llangle \F(\bb), \bar{l}_a\DDa\rrangle,
    \end{equation*}
	applying \eqref{eqn:DDforBM_approx}, linearity of the dual pairing and and the fact that $\F(\bb)\in \HidaFcts$, see \cite{GV18}, p.\ 356 and Remark 3.3. Therefore, \eqref{gen:rho_x} holds for all $l\in\ho$ and $\F \in \exp(C^{\infty})$, indeed.
	
	In order to generalize this result, let $\F\in C_b^1(\ltwo)$ and choose a sequence $(\F_n)_n \subset \exp(C^{\infty})$ converging to $\F$ in $W^{1,2}(\ltwo,\mubb)$ as $n\to\infty$, which implies
	\begin{equation*}
		\int_{\ltwo} \F_n \langle l,\ufd{\vvmeasx}\rangle \, d\var{\vvmeasx} = \llangle \F_n(\bb), \bar{l}_a\DDa\rrangle \xrightarrow{n\to\infty} \llangle \F(\bb), \bar{l}_a\DDa\rrangle.
	\end{equation*}
	Further, the map $C_b^1(\ltwo)\incl \Psi \mapsto \int_{\ltwo} \Psi \langle l,\ufd{\vvmeasx}\rangle d\var{\vvmeasx} \in\R$ is continuous w.r.t.\ the $W^{1,2}(\ltwo,\mubb)$-norm for all $l\in \ho\cap H^2$, indeed: First consider that
	\begin{multline*}
		\bigg|\int_{\ltwo} \Psi \langle l, n_{\sigma^{(a, \varepsilon_{n,a})}} \rangle \,d\|\sigma^{(a, \varepsilon_{n,a})}\|\bigg|
		= \bigg|\int_{\ltwo} \Psi \, \bar{l}_a \gamma_{\varepsilon_{n,a}}(\ddexta(\cdot))\, d\mubb\bigg|\\
		= \bigg|\int_{\ltwo} \partial_l \Psi + \Psi (l'',\cdot)_{\ltwo} d(\rho^{a,\varepsilon_{n,a}} \mubb)\bigg|
		\leq D \|\Psi\|_{W^{1,2}(\ltwo,\mubb)},
	\end{multline*}
	for some $D<\infty$ independent of $n\in\N$. The second equation is a consequence of \eqref{smoothedIbPF}, and the existence of such a constant $D$ follows from the argument used in Lemma \ref{lem:smoothedIbPF} together with the bound $|\rho^{a,\varepsilon_{n,a}}|\leq 1$. Then continuity follows from
	\begin{equation*}
		\bigg|\int_{\ltwo} \Psi \langle l, n_{\vvmeasx} \rangle \,d\|\vvmeasx\|\bigg|
		\xleftarrow{n\to\infty}\bigg|\int_{\ltwo} \Psi \langle l, n_{\sigma^{(a, \varepsilon_{n,a})}} \rangle \,d\|\sigma^{(a, \varepsilon_{n,a})}\|\bigg| \leq D\|\Psi\|_{W^{1,2}(\ltwo,\mubb)},
	\end{equation*}
	and directly yields \eqref{gen:rho_x} by
	\begin{equation*}
		\int_{\ltwo} \F_n \langle l,\ufd{\vvmeasx}\rangle \,d\var{\vvmeasx} \xrightarrow{n\to\infty} \int_{\ltwo} \F \langle l,\ufd{\vvmeasx}\rangle \,d\var{\vvmeasx}.
	\end{equation*}
	
	Moreover, for all $l \in \domL$ and $\F \in C_b^1(\ltwo)$ we have
    \begin{multline*}
    	\int_{\ltwo} D^*(\F(z)l) \densa\, d\mubb(z)\\
    	= -\int_{\ltwo} (l, D\F(z))_{\ltwo} \densa\, d\mubb(z) +2\int_{\ltwo} \F(z) (Al,z)_{\ltwo} \densa \,d\mubb(z)\\
    	= -\int_{\ltwo} \partial_l \F(z)\densa \,d\mubb(z) - \int_{\ltwo} \F(z) (l'',z)_{\ltwo} \densa \,d\mubb(z)\\
    	= \llangle \F(\bb), \bar{l}_a\DDa \rrangle
    	= \int_{\ltwo} \F \langle l,\ufd{\vvmeasx}\rangle \,d\var{\vvmeasx},
    \end{multline*}
	where the equalities above can be obtained by applying \eqref{eqn:D*on_class_L2vald_fcts}, the fact that $(l, D\F)_{\ltwo}=\partial_l \F$, IbPF \eqref{IbPF:rho_x} and \eqref{gen:rho_x} in this order. Now, \eqref{gen:ibp_x} follows by linear extension. The analogous statements for $\dens$ are proven in exactly the same way, using Lemma \ref{lem:conv_int_onefold_moll}, IbPF \eqref{IbPF:rho} and \eqref{gen:rho}. This finishes the proof.
\end{proof}

\begin{remark}\label{rem:ExplicitExpressions}
	In use of \cite{RZZ12}, Theorem 3.1(ii), (3.13) and (3.14), one can obtain explicit expressions for the variation measure and the unit field of sufficiently regular vector measures. This may be useful for analyzing $\ufd{\vvmeas}$ and $\var{\vvmeas}$ further.
\end{remark}

\begin{corollary}[Process Decomposition]\label{cor:ProcDecomp}
    There is an $\mathcal{E}^{a}$-exceptional set $S^a \subset \ltwo$ s.th.\ for all $z \in \ltwo\setminus S^a$ under $P_z^a$ there exists an $\mathcal{M}_t^a$-cylindrical Wiener process $W^{z,a}$, s.th.\ the sample paths of the associated distorted Ornstein-Uhlenbeck process $M^{a}$ on $\ltwo$ satisfy for all $l \in D(A) \cap \ho$ that
    \begin{multline}\label{eqn:ProcDecomp}
        \langle l, X_{t}^a-X_0^a\rangle = \int_0^{t} \langle l, dW_s^{z,a}\rangle + \frac{1}{2} \int_0^{t} \langle l, \ufieldx (X_s^a) \rangle d\pcafx{s} - \int_0^{t} \langle l'', X_s^a \rangle ds \\ \forall \, t \geq 0 \, \text{$P_z^a$-a.s.}
    \end{multline}
    In this equation, $\pcafx{t}$ denotes the real valued PCAF associated with $\vmeasx$ by the Revuz correspondence.

    The analogous statement holds for $\dens$.
\end{corollary}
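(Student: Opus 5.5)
The plan is to obtain the decomposition as an application of the general Skorokhod-type result in \cite{RZZ12} (their Theorem 4.1/Corollary for BV densities in the Gelfand triple $(\ho,\ltwo,\hno)$). Everything needed there has been established above for the setting of Subsection \ref{sect:gelf_triple_bv}. First, $A=-\frac12\frac{d^2}{dx^2}$ with domain $\domL$ satisfies Hypothesis 2.1 of \cite{RZZ12}. Second, by Theorem \ref{thm:df_closable_qreg}, $(\mathcal{E}^a,\mathcal{F}^a)$ is a quasi-regular local Dirichlet form with associated diffusion $M^a$. Third, Theorem \ref{lem:gen_ibp} gives $\densa\in\mathrm{BV}(\ltwo,\ho)$ together with the representation \eqref{gen:ibp_x} of $\int D^*G\,\densa\,d\mubb$ via the vector measure $\vvmeasx=\ufieldx\var{\vvmeasx}$ of bounded variation.

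The first step is to attach a PCAF to $\var{\vvmeasx}$. By \cite{RZZ12} (their Theorem 3.1 and the subsequent smoothness statement), the variation measure of a BV density is smooth with respect to $\mathcal{E}^a$, i.e.\ it charges no $\mathcal{E}^a$-exceptional set and has finite energy on a nest. One can also see this directly from \eqref{gen:rho_x}: the functional $\F\mapsto\int \F\langle l,\ufieldx\rangle\,d\var{\vvmeasx}$ is bounded in $W^{1,2}$, and $\var{\vvmeasx}=\|\delta_a\|_{\hno}\lmeasx$ is a positive measure. Revuz correspondence \cite{FOT11,MR92} then yields the PCAF $\pcafx{t}$.

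The second step applies Fukushima's decomposition to the coordinate functions $u_l(z)=\langle l,z\rangle$ for $l\in\domL$. These functions lie locally in $\mathcal{F}^a$, and their energy measure is $\|l\|_{\ltwo}^2$ times the occupation measure, so the martingale parts are Brownian with covariance $(l,\tilde l)_{\ltwo}$. Using Lévy's characterization on a countable dense set of such $l$, these assemble into an $\mathcal{M}^a_t$-cylindrical Wiener process $W^{z,a}$. The zero-energy part is identified through the IbPF \eqref{gen:ibp_x}. Concretely, $-\mathcal{E}^a(u_l,v)$ equals the integral of $v$ against the signed smooth measure $-(l'',z)\densa\mubb+\frac12\langle l,\ufieldx\rangle\var{\vvmeasx}$, where the factor conventions follow the $\frac12$-normalisation of $A$ and of the form. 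Revuz correspondence therefore turns this zero-energy part into $\frac12\int_0^t\langle l,\ufieldx(X_s^a)\rangle\,d\pcafx{s}-\int_0^t\langle l'',X^a_s\rangle\,ds$.

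The main obstacle is passing from decompositions that hold for each fixed $l$ outside an $l$-dependent exceptional set to a single exceptional set $S^a$ valid for all $l\in\domL$. To handle this, I will take a countable $\domL$-dense set of $l$ and form the union of the corresponding exceptional sets. Each term in \eqref{eqn:ProcDecomp} is continuous in $l$ with respect to the $H^2$ norm: for the drift this uses $|\langle l,\ufieldx\rangle|\le\|l\|_{\ho}$ and finiteness of $\pcafx{t}$, and for the last term it uses $\int_0^t\|X_s\|\,ds<\infty$. This continuity extends the identity to all $l\in\domL$. The statement for $\dens$ follows by the same argument, with $\vvmeas$ and \eqref{gen:ibp} in place of $\vvmeasx$ and \eqref{gen:ibp_x}.
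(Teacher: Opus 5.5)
Your overall route matches the paper's. Its proof only records that $\densa,\dens\in\mathrm{QR}(\ltwo)\cap\mathrm{BV}(\ltwo,\ho)$ (Lemma~\ref{lem:L1+_RayHamza}, Remark~\ref{rem:RayHamza}, Theorem~\ref{lem:gen_ibp}) and then applies \cite{RZZ12}, Theorem~3.2 (not Theorem~4.1). Your two steps are a sketch of the proof of that theorem.

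\textbf{Error in the zero-energy part.} Both contributions to $-\mathcal{E}^a(u_l,v)$ come from a single identity, namely \eqref{eqn:D*on_class_L2vald_fcts} combined with \eqref{gen:ibp_x} for $G=v\,l$:
\[
\int_{\ltwo}\partial_l v\,\densa\,d\mubb=-\int_{\ltwo}v\,(l'',\cdot)_{\ltwo}\,\densa\,d\mubb-\int_{\ltwo}v\,\langle l,\ufieldx\rangle\,d\var{\vvmeasx}.
\]
So the two terms must carry the same sign and the same prefactor. You already use the $\frac12$-normalised form of \cite{RZZ12} when you give the martingale parts covariance $(l,\tilde l)_{\ltwo}$. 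With that form, the Revuz measure of the zero-energy part is
$\frac12(l'',z)_{\ltwo}\,\densa\mubb+\frac12\langle l,\ufieldx\rangle\var{\vvmeasx}$.
The measure you wrote, $-(l'',z)_{\ltwo}\,\densa\mubb+\frac12\langle l,\ufieldx\rangle\var{\vvmeasx}$, arises under no normalisation.

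The resulting drift is therefore $\frac12\int_0^t\langle l'',X_s^a\rangle\,ds=-\int_0^t\langle Al,X_s^a\rangle\,ds$. This is exactly what \cite{RZZ12}, Theorem~3.2 yields, and it agrees with the heuristic equation $dX^a_t=\frac12\Delta X^a_t\,dt+\dots$ in the introduction. The term $-\int_0^t\langle l'',X_s^a\rangle\,ds$ in the displayed decomposition is evidently a misprint for $-\int_0^t\langle Al,X_s^a\rangle\,ds$. Appealing to ``factor conventions'' to reach it hides an inconsistency rather than resolving it.

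\textbf{A smaller point on smoothness.} A bound for $F\mapsto\int_{\ltwo}F\langle l,\ufieldx\rangle\,d\var{\vvmeasx}$ in the $W^{1,2}(\ltwo,\mubb)$-norm does not give smoothness with respect to $\mathcal{E}^a$, for two reasons:
\begin{enumerate}
\item that norm dominates the $\mathcal{E}^a_1$-norm, not conversely;
\item a bound on a signed functional does not control $\int|v|\,d\var{\vvmeasx}$.
\end{enumerate}
For $\vvmeasx$ this is repairable because $\ufieldx$ is constant. Take $l$ with $\bar l_a\neq0$ and $v\geq0$, and use the IbPF with respect to $\densa\mubb$ together with Cauchy--Schwarz in $L^2(\ltwo,\densa\mubb)$. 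This gives $\int|v|\,d\var{\vvmeasx}\le C\,\mathcal{E}^a_1(v,v)^{1/2}$. For $\vvmeas$ the unit field is not explicit, so there you genuinely need the argument of \cite{RZZ12}.

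The remaining steps are standard and fine: assembling $W^{z,a}$, and obtaining a single exceptional set via a countable dense family of $l$ together with continuity in $l$.
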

\begin{proof}
    Since it holds that $\densa$, $\dens \in \mathrm{QR}(\ltwo) \cap \mathrm{BV}(\ltwo, \ho)$, see Lemma \ref{lem:L1+_RayHamza}, Remark \ref{rem:RayHamza} and Theorem \ref{lem:gen_ibp}, this corollary follows from a direct application of \cite{RZZ12}, Theorem 3.2.
\end{proof}

In order to attain a deeper understanding of the process associated to $\mathcal{E}^{a}$, we now examine the support of $\vmeasx$. However, it is straightforward to prove that the support of $\vmeasx$ w.r.t.\ the topology induced by the $\ltwo$-norm is trivial, i.e.\ the entire space $\ltwo$. Therefore it is fruitful to restrict $\vmeasx$ to $\coec$ and compute the support w.r.t.\ the topology induced by the supremum norm, which is finer than that induced by the $\ltwo$-norm, allowing for a smaller and more insightful support.

\begin{lemma}[Support of $\lmeasx$ and $\vmeasx$]\label{lem:supp_lmeasx}
    The set $\setza_0 \coloneqq \{z \in \hc \mid z_a=0\}$ is a measurable set of full measure w.r.t.\ $\lmeasx$ and $\vmeasx$, (where we identify $\setza_0$ with its canonical embedding into $\coec$,) but it is not closed as a subset of $(\co, \| \cdot \|_{\infty})$. Identifying the considered measures with their restrictions to $\borel(\ltwo)\cap \coec$ or $\borel(\co,\|\cdot\|_{\infty})$ (see Remark \ref{rem:embeddings_and_measurability}), it holds that
    \begin{equation*}
        \mathrm{supp} \, \lmeasx = \mathrm{supp} \vmeasx = \{z \in \co \mid z_a=0\},
    \end{equation*}
    where the support is defined w.r.t.\ the topology on $\co$ induced by $\| \cdot \|_{\infty}$.
\end{lemma}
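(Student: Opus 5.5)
The plan is to work with $\lmeasx$ only and use that $\vmeasx=\|\delta_a\|_{\hno}\,\lmeasx$ (Lemma \ref{lem:vvmeasx_weak_conv}), so every statement about $\vmeasx$ follows from the corresponding one for $\lmeasx$. The tool throughout is the portmanteau theorem for $\mu^{(a,\varepsilon_{n,a})}\xrightarrow{w}\lmeasx$ on $\ltwo$, applied to sets that are closed in $\ltwo$. Such sets come from the compactness of $\hc_m$ in $\ltwo$ (Lemma \ref{lem:hoelder_set_props}).

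\emph{Step 1: $\setza_0$ has full measure.} First, $\hc_m^c$ is open in $\ltwo$. By portmanteau and the uniform bound obtained in the proof of Lemma \ref{lem:approxmeasx_unif_tight},
\[
\lmeasx(\hc_m^c)\le\liminf_n\mu^{(a,\varepsilon_{n,a})}(\hc_m^c)<C_T
\]
for $m$ large. Hence $\lmeasx(\hc^c)=0$.

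Next, fix $m$ and $\eta>0$, and put $F_{m,\eta}\coloneqq\{z\in\hc_m:|\bar z_a|\ge\eta\}$. On the compact set $\hc_m$ the $\ltwo$-topology and the sup-topology coincide, since the identity from a compact space into a Hausdorff space is a homeomorphism. So $z\mapsto\bar z_a$ is $\ltwo$-continuous on $\hc_m$, and $F_{m,\eta}$ is compact, hence closed, in $\ltwo$. Portmanteau for closed sets gives
\[
\lmeasx(F_{m,\eta})\le\limsup_n\mu^{(a,\varepsilon_{n,a})}(F_{m,\eta})\le\limsup_n\int_{|y|\ge\eta}\gamma_{\varepsilon_{n,a}}(y)\,\gamma_{a(1-a)}(y)\,dy=0.
\]
Taking the union over $m\in\N$ and $\eta=1/j$ shows $\lmeasx(\{z\in\hc:\bar z_a\neq0\})=0$, so $\setza_0$ has full measure. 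Measurability of $\setza_0$ follows from Lemma \ref{lem:hoelder_set_props}(ii) together with measurability of $z\mapsto\bar z_a$ on $\coec$.

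\emph{Step 2: $\setza_0$ is not closed.} I would take a continuous function in $\co$ vanishing at $a$ that is not Hölder continuous at some point $b\neq a$. For instance, set $g(x)\coloneqq1/\log(e/|x-b|)$ near $b$, and subtract a linear correction to enforce the values $0$ at $0$, $a$ and $1$. Its Lipschitz truncations $g_n$, for instance $g$ flattened on $B_{1/n}(b)$ and corrected to keep $g_n(a)=0$, lie in $\setza_0$ and converge uniformly to $g\notin\hc$.

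\emph{Step 3: the support.} The inclusion $\supp\lmeasx\subseteq\overline{\setza_0}^{\|\cdot\|_\infty}\subseteq\{z_a=0\}$ holds because the latter set is sup-closed.

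For the converse, fix $z\in\co$ with $z_a=0$ and $\delta>0$; the goal is $\lmeasx(\bar B^\infty_\delta(z))>0$. This is the main obstacle, since portmanteau only provides lower bounds through closed sets. I will use $F\coloneqq\bar B^\infty_\delta(z)\cap\hc_m$, which is sup-compact and therefore $\ltwo$-closed, so that
\[
\lmeasx(F)\ge\limsup_n\mu^{(a,\varepsilon_{n,a})}(F).
\]
By Lemma \ref{lem:independence},
\[
\mu^{(a,\varepsilon)}(F)=\int_\R\Prob\big(\bb^{\pin,a}+yl^a\in F\big)\,\gamma_\varepsilon(y)\,\gamma_{a(1-a)}(y)\,dy.
\]
I restrict the integral to $|y|<\delta/2$. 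For such $y$, the event contains $\{\bb^{\pin,a}\in\bar B^\infty_{\delta/2}(z)\cap\hc_k\}$ once $m\ge m(k,1,a)$, by the computation in Lemma \ref{lem:roughness_of_pinning}; here $z_x$ there is replaced by $0$, which is the pinned value of $\bb^{\pin,a}$ at $a$.

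It then suffices to show $p\coloneqq\Prob(\bb^{\pin,a}\in B^\infty_{\delta/2}(z))>0$. The pinned bridge consists of two independent Brownian bridges on $[0,a]$ and $[a,1]$, each with full support in the corresponding $C_0$-space, and $z$ vanishes at $0$, $a$ and $1$, so $p>0$. Paths of $\bb^{\pin,a}$ are a.s.\ in $\hc$, so one can choose $k$ with $\Prob(\bb^{\pin,a}\in B^\infty_{\delta/2}(z)\cap\hc_k)\ge p/2$.

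Finally, $\int_{|y|<\delta/2}\gamma_\varepsilon(y)\,\gamma_{a(1-a)}(y)\,dy\to\gamma_{a(1-a)}(0)>0$ as $\varepsilon\downarrow0$. Hence
\[
\lmeasx(F)\ge\tfrac p2\,\gamma_{a(1-a)}(0)>0,
\]
which gives the support statement for both $\lmeasx$ and $\vmeasx$.
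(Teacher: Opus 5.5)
\paragraph{A reversed portmanteau inequality in Step~1.} For the closed (indeed $\ltwo$-compact) set $F_{m,\eta}$ you write $\lmeasx(F_{m,\eta})\le\limsup_n\mu^{(a,\varepsilon_{n,a})}(F_{m,\eta})$. This is the reverse of the portmanteau theorem. For closed $F$ one only has $\limsup_n\mu^{(a,\varepsilon_{n,a})}(F)\le\lmeasx(F)$, and your inequality fails in general: take Dirac measures $\delta_{1/n}\to\delta_0$ on $\R$ and $F=\{0\}$. You state the correct direction yourself in Step~3 (``portmanteau only provides lower bounds through closed sets'').

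As written, the full-measure property of $\setza_0$ is therefore not proved. The inclusion $\supp\lmeasx\subseteq\{z\in\co\mid z_a=0\}$ rests on it, so it is not proved either. Upper bounds on $\lmeasx$ must come from $\ltwo$-open sets, and $\{z\mid|\bar z_a|>\eta\}$ is not $\ltwo$-open. This is exactly why the paper works with the open complements of the compact sets $\setza_{m,r}=\{z\in\hc_m\mid z_a\in\overline{B_r(0)}\}$.

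\paragraph{Repair.} For every $m'\in\N$, the set $G_{m'}\coloneqq\ltwo\setminus\{z\in\hc_{m'}\mid|\bar z_a|\le\eta/2\}$ is $\ltwo$-open (the complement of a compact set) and contains $\{z\in\hc\mid|\bar z_a|\ge\eta\}$. Hence
\begin{align*}
\lmeasx(\{z\in\hc\mid|\bar z_a|\ge\eta\})
&\le\liminf_{n\to\infty}\mu^{(a,\varepsilon_{n,a})}(G_{m'})\\
&\le\sup_{\varepsilon\in(0,1]}\mu^{(a,\varepsilon)}(\hc_{m'}^c)+\limsup_{n\to\infty}\int_{|y|>\eta/2}\gamma_{\varepsilon_{n,a}}(y)\,\gamma_{a(1-a)}(y)\,dy.
\end{align*}
The last term vanishes. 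The first tends to $0$ as $m'\to\infty$ by Lemma~\ref{lem:approxmeasx_unif_tight}. This is essentially the paper's route: the paper uses the pinning bound instead of a plain union bound, then lets $k\to\infty$ and $r\downarrow0$.

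\paragraph{The remaining steps.} These are sound.
\begin{itemize}
\item Your proof of $\lmeasx(\hc^c)=0$ uses portmanteau in the correct direction.
\item Step~2 is a correct explicit construction, and more detailed than the paper's.
\item Step~3 is actually cleaner than the paper's argument. The set $\bar B^\infty_\delta(z)\cap\hc_m$ is genuinely $\ltwo$-closed, so the closed-set inequality applies directly. The sup-closed ball the paper uses is not closed in $\ltwo$, so the paper's step needs the extra remark that $\lmeasx$ is concentrated on $\coec$.
\item With $m(k,1,a)$ you implicitly need $\delta\le 2$. This is harmless.
\end{itemize}
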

\begin{proof}
    Since $\vmeasx$ and $\lmeasx$ only differ by a multiplication of a non-negative constant, see \eqref{eqn:sigma_mu_connection}, it suffices to show the considered properties for $\lmeasx$.
    
    First, we set $\setza_{m,r} \coloneqq \{ z \in \hc_m \mid z_a \in \overline{B_r(0)} \}$ and $\setza_{m,0} \coloneqq \{ z \in \hc_m \mid z_a = 0\}$ for all $m \in \N \cup \{\infty\}$ and $r \in \R_{>0}$, where $\hc_{\infty} \coloneqq \hc$. Note that $\setza_0 = \setza_{\infty, 0}$. For $m < \infty$ and $r\in\R_{\geq 0}$, $\setza_{m,r}$ is a compact subset of $(\co, \|\cdot\|_{\infty})$, since by Lemma \ref{lem:hoelder_set_props}~(i), $\hc_m$ is compact, and $\setza_{m,r}$ is the intersection of $\hc_m$ with the closed set $\{z\in\co\mid z_a\in \overline{B_r(0)}\}$ for $r>0$, and with the closed set $\{z\in\co\mid z_a = 0\}$ for $r=0$. It follows that, for $m < \infty$ and $r\in\R_{\geq 0}$, $\setza_{m,r}$ is closed and Borel measurable in $(\co, \| \cdot \|_{\infty})$, and $\iota (\setza_{m,r})$ is closed and Borel measurable in $(\ltwo, \| \cdot \|_{\ltwo})$. Since $\setza_{m,0} \uparrow \setza_{\infty, 0}$ as $m \uparrow \infty$, it follows that $\setza_0=\setza_{\infty, 0}$ is Borel measurable.

    Now, choose for any $k \in \N$ and $r \in \R_{>0}$ an integer $m(k,2r,a)\in \N$ as in Lemma \ref{lem:roughness_of_pinning}, where we w.l.o.g.\ require that $m(\cdot, 2r,a)$ is strictly monotonically increasing. Then it holds
    \begin{align*}
    	&\approxmeasx(\setza_{m(k,2r,a), r}^c)
    	= \Emubb\left[ \ind_{\setza_{m(k,2r,a), r}^c}(\bb) \, \gmoll(\bb_a) \right]\\
    	&= \int_{\R} \Emubb\left[ \ind_{\setza_{m(k,2r,a), r}^c}(\bb^{\pin,a} + yl^a) \right] \gmoll(y) \, \mathcal{N}(0, a(1-a))(dy)\\
    	&\leq \int_{\overline{B_r(0)}} \Emubb\left[ \ind_{\hc_k^c}(\bb) \right] \gmoll(y) \, \mathcal{N}(0, a(1-a))(dy) + \int_{\overline{B_r(0)}^c} \gmoll(y) \,\mathcal{N}(0, a(1-a))(dy)\\
    	&\leq \frac{1}{\sqrt{2\pi a(1-a)}} \left( \mubb(\hc_k^c) + \int_{\overline{B_r(0)}^c} \gmoll(y) \, dy \right),
    \end{align*}
    where the second line is a consequence of Lemma \ref{lem:independence}, and the third line holds due to the following:
    If we have that $y \in \overline{B_r(0)}\,(\subset B_{2r}(0))$ and further $(\bb^{\pin,a} + yl^a) \in \setza_{m(k,2r,a), r}^c$, it has to hold that $(\bb^{\pin,a} + yl^a) \in \hc_{m(k,2r,a)}^c$, and hence $\bb \in \hc_k^c$ due to Lemma \ref{lem:roughness_of_pinning}.

    An application of the Portmanteau theorem for finite measures, using that $\setza_{m(k,2r,a),r}^c$ is open, yields for any $r>0$ that
    \begin{align*}
        \lmeasx (\setza_{m(k,2r,a), r}^c)
        &\leq \liminf_{n \to \infty} \mu^{(a,\varepsilon_{n,a})} (\setza_{m(k,2r,a), r}^c)\\
        &\leq \liminf_{n \to \infty} \frac{1}{\sqrt{2\pi a(1-a)}} \left( \mubb(\hc_k^c) + \int_{\overline{B_r(0)}^c} \gamma_{\varepsilon_{n,a}}(y) \,dy \right)\\
        &\leq \frac{1}{\sqrt{2\pi a(1-a)}} \left( \mubb(\hc_k^c) + \liminf_{n \to \infty}\int_{\overline{B_r(0)}^c} \gamma_{\varepsilon_{n,a}}(y) \,dy \right)\\
        &= \frac{1}{\sqrt{2\pi a(1-a)}} \, \mubb(\hc_k^c),
    \end{align*}
    where the last equality holds due to the fact that $(\gmoll)_{\varepsilon>0}$ is an approximate identity and $\varepsilon_{n,a} \to 0$ as $n\to\infty$.

    Now, since $\setza_{m(k,2r,a),r} \uparrow \setza_{\infty, r}$ as $k \uparrow \infty$ and hence $m(k,2r,a) \uparrow \infty$ by construction, we obtain $\setza_{m(k,2r,a),r}^c \downarrow \setza_{\infty, r}^c$ as $k \uparrow \infty$. Since $\lmeasx$ is finite, continuity of measures from above yields that
    \begin{align*}
        \lmeasx (\setza_{\infty, r}^c)
        &= \liminf_{k \to \infty} \lmeasx (\setza_{m(k,2r,a), r}^c)
        \leq \liminf_{k \to \infty} \frac{1}{\sqrt{2\pi a(1-a)}} \, \mubb(\hc_k^c)\\
        &= \frac{1}{\sqrt{2\pi a(1-a)}} \, \mubb(\hc^c)
        = 0
    \end{align*}
    for all $r>0$. Since $\setza_{\infty, r} \downarrow \setza_{\infty, 0} = \setza_0$ and hence $\setza_{\infty, r}^c \uparrow \setza_{\infty, 0}^c = \setza_0^c$ as $r \downarrow 0$, continuity of measures from below yields $\lmeasx (\setza_0^c) = \lim_{r \downarrow 0} \lmeasx (\setza_{\infty, r}^c) = 0$. This proves the first claim.

    Further, the set $\setza_0$ is not closed in $(\co, \| \cdot \|_{\infty})$, since it is indeed possible to find sequences in $\hc$ which converge uniformly to non-H\"{o}lder continuous functions in $\co$, and the restriction ``$z_a = 0$'' can not prohibit that, which is straightforward to show. Moreover, recall that $\{z \in \co \mid z_a = 0\}$ is closed and hence Borel measurable in $(\co, \| \cdot \|_{\infty})$. Since it is a superset of $\setza_0$, $\{z \in \co \mid z_a = 0\}$ has full measure, and since it is also closed, we can derive that $\mathrm{supp}(\lmeasx) \subseteq \{z \in \co \mid z_a = 0\}$. We even have equality, which we prove now: Let $z \in \co$ s.th.\ $z_a = 0$, $r>0$, and let $B_r^{\infty}(z)$ denote the $r$-ball w.r.t.\ $\| \cdot \|_{\infty}$ with center $z$. Then we can calculate for any $\varepsilon>0$ that
    \begin{align*}
    	\approxmeasx (\overline{B_{r/2}^{\infty}(z)})
    	&= \int_{\R} \E\left[\ind_{\overline{B_{r/2}^{\infty}(z)}}(\bb^{\pin,a}+yl^a)\right] \gmoll(y) \, \mathcal{N}(0,a(1-a))(dy)\\
    	&\geq \int_{B_{s_1}(0)} \E \left[\ind_{B_{s_2}^{\infty}(z)}(\bb)\right] \gmoll(y) \, \mathcal{N}(0,a(1-a))(dy)\\
    	&= \mubb(B_{s_2}^{\infty}(0)) \int_{B_{s_1}(0)} \gmoll(y) \, \mathcal{N}(0,a(1-a))(dy),
    \end{align*}
	where the first line is a consequence of Lemma \ref{lem:independence}, and the second line holds for $s_1\coloneqq r/6$ and $s_2\coloneqq r/3$. Indeed, if $y\in B_{s_1}(0)$ and $\bb\in B_{s_2}^{\infty}(z)$, it follows that $\bb^{\pin,a}+yl^a \in \overline{B_{r/2}^{\infty}(z)}$, since
	\begin{align*}
		&\| \bb^{\pin,a}+yl^a - z\|_{\infty}
		= \| \bb - z\|_{\infty} + |\bb_a-y| \, \|l^a\|_{\infty}\\
		&\leq \| \bb - z\|_{\infty} + (|\bb_a-z_a|+|y|)\cdot 1
		< s_1+2s_2=r/2.
	\end{align*}
	Subsequently, we can estimate
	\begin{align*}
		\lmeasx(B_r^{\infty}(z)) &\geq \lmeasx(\overline{B_{r/2}^{\infty}(0)})
		\geq \limsup_{n\to\infty} \mu_{(a,\varepsilon_{n,a})} (\overline{B_{r/2}^{\infty}(0)})\\
		&\geq \mubb(B_{s_2}^{\infty}(0)) \limsup_{n\to\infty} \int_{B_{s_1}(0)} \gamma_{\varepsilon_{n,a}}(y) \, \mathcal{N}(0,a(1-a))(dy)\\
		&= \frac{1}{\sqrt{2\pi a(1-a)}}\,\mubb(B_{s_2}^{\infty}(0)) > 0,
	\end{align*}
	where the first line follows due to monotonicity of measures and the aforementioned Portmanteau theorem for finite measures, and the final line holds due to properties of the approximate identity $(\gmoll)_{\varepsilon>0}$, and the fact that $\mubb$ has full topological support when viewed as a Borel measure on $(\co, \| \cdot \|_{\infty})$. This implies that $z\in\supp(\lmeasx)$, which finishes the proof.
\end{proof}

We define $\Prob_m \coloneqq \int_{\ltwo} \Prob_z \, dm(z)$ for every measure $m$ on $(\ltwo, \borel(\ltwo))$.

\begin{lemma}\label{lem:process_continuity}
	The process $X^a$ is almost surely continuous in the sense that
	\begin{equation*}
		\Prob_z[X_t^a \in \coec \text{ for a.e. } t\in [0,\infty)] = 1 \quad\text{for all } z\in\ltwo.
	\end{equation*}
	The same statement holds for $\mathcal{E}$-q.e.\ $z\in\ltwo$ for $X$.
\end{lemma}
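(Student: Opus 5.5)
Following \cite{RZZ12}, Theorem 6.5, the plan is to reduce the statement to the occupation time of the complement of $\coec$ and show that this occupation time vanishes. Set $N \coloneqq \ltwo \setminus \coec$; by Remark \ref{rem:embeddings_and_measurability} it is a Borel set. We need $\int_0^\infty \ind_N(X_t^a)\,dt = 0$ $\Prob_z^a$-a.s. By Fubini this follows once $\E_z^a\big[\int_0^\infty e^{-t}\ind_N(X_t^a)\,dt\big] = R_1\ind_N(z) = 0$, where $R_1$ is the $1$-resolvent of the process.

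\textbf{Step 1 (holds almost everywhere).} Since $\mubb(N)=0$, we also have $\densa\mubb(N)=0$. Hence $\ind_N = 0$ in $L^2(\ltwo,\densa\mubb)$, so $R_1\ind_N = 0$ $\densa\mubb$-a.e. This immediately gives the claim for $\densa\mubb$-a.e.\ $z$.

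\textbf{Step 2 (quasi-everywhere).} The same holds for $\dens\mubb$. The resolvent of an $m$-tight special standard process associated with a quasi-regular Dirichlet form is the $\mathcal{E}$-quasi-continuous version of the $L^2$-resolvent. The function $z\mapsto R_1\ind_N(z)$ is excessive and equal to $0$ $m$-a.e., and an excessive function vanishing $m$-a.e.\ vanishes q.e. (see \cite{FOT11}, Lemma 4.1.5, or \cite{MR92}). This yields the statement for $X$ for $\mathcal{E}$-q.e.\ $z$, and likewise for $X^a$.

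\textbf{Step 3 (every starting point for $X^a$).} This is the main obstacle: we must remove the exceptional set. The plan is to show that the transition kernels $p_t^a(z,\cdot)$ are absolutely continuous w.r.t.\ $\mubb$ for every $z$ and every $t>0$. Then
\[
\E_z^a\Big[\int_0^\infty \ind_N(X^a_t)\,dt\Big] = \int_0^\infty p_t^a(z,N)\,dt = 0.
\]
The approach is to exploit the structure of $\densa$: it depends on $z$ only through the single coordinate $\bar z_a$. We decompose $\ltwo$ as in Lemma \ref{lem:independence}, writing $z = z^{\pin,a} + z_a l^a$. Under $\mubb$ the pinned part is independent of the one-dimensional part, and the Dirichlet form splits accordingly. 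Along the Gaussian directions orthogonal to the coordinate $z_a$ (in the Cameron--Martin sense), the process should behave like an Ornstein--Uhlenbeck process. That process has the strong Feller / absolute continuity property for $t>0$ from every starting point, because its semigroup $e^{-tA}$ maps $\ltwo$ into smooth functions vanishing at $0,1$; in particular $e^{-tA}z \in \coec$ for every $z\in\ltwo$ and every $t>0$. Along $l^a$, the process is a one-dimensional diffusion reflected at $0$.

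If the explicit decomposition is too delicate, a fallback is to use the Skorokhod decomposition of Corollary \ref{cor:ProcDecomp} in mild form. This gives
\[
X^a_t = e^{-tA}z + \int_0^t e^{-(t-s)A}\,dW_s + \tfrac12\int_0^t e^{-(t-s)A}\hat\delta_a\, d\pcafx{s},
\]
because $\ufieldx$ is the constant $\hat\delta_a$. The first term lies in $\coec$ for all $t>0$ and every $z$. The stochastic convolution is continuous. The last term is $\frac{1}{2\|\delta_a\|_{\hno}}\int_0^t G_{t-s}(\cdot,a)\,d\pcafx{s}$ with the Dirichlet heat kernel $G$. Since $\pcafx{}$ has finite total mass on $[0,t]$, this term is continuous in $x$ for a.e.\ $t$ by Fubini, as $\int_0^t \sup_x G_{t-s}(x,a)\, d\pcafx{s}$ is finite for a.e.\ $t$.

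This mild formula is initially available only for q.e.\ $z$. To reach every $z$, apply the Markov property at a small time $\delta>0$. This requires $X^a_\delta \notin S^a$ a.s.\ under $\Prob_z^a$ for every $z$, which again reduces to absolute continuity of $p_\delta^a(z,\cdot)$ w.r.t.\ $\densa\mubb$. I would try to establish this by comparing with the Ornstein--Uhlenbeck transition kernel via the $l^a$-decomposition.
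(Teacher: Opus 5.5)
Your Steps 1 and 2 are correct and are essentially the paper's argument for quasi-every starting point. The paper uses the Revuz correspondence of $(t)_t$ with $\densa\mubb$ to get zero expected occupation time of $\coec^c$, then excessivity, fine continuity and \cite{FOT11}, Lemma 4.1.5. This already settles the claim for $X$.

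\textbf{The gap is Step 3, which is the only new content for $X^a$.} Your main route rests on a splitting that does not exist.
\begin{enumerate}[label=(\roman*)]
\item The decomposition $z=z^{\mathrm{pin},a}+\bar z_a l^a$ is orthogonal in the Cameron--Martin space $\ho$. But $\mathcal{E}^a$ is built from the $\ltwo$-gradient, and in $\ltwo$ the direction $l^a$ is not orthogonal to the pinned directions.
\item The coordinate $z\mapsto\bar z_a$ is not even $\ltwo$-continuous. Formally its gradient is $\delta_a\notin\ltwo$, so it has infinite energy.
\item Hence $\mathcal{E}^a$ is not a product of a form in the pinned variable and a one-dimensional form in $\bar z_a$. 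Likewise $t\mapsto\overline{X^a_t}(a)$ is not a one-dimensional reflected diffusion. Already for the unreflected Ornstein--Uhlenbeck process it is a non-Markovian Gaussian process, only about $1/4$-H\"older in time.
\end{enumerate}
Your fallback does not close the gap either. The mild formula comes from Corollary \ref{cor:ProcDecomp}, so it holds only off the exceptional set $S^a$. It gives nothing for quasi-every $z$ beyond what Step 2 already provides. As you note yourself, reaching arbitrary $z$ again requires $p^a_\delta(z,\cdot)\ll\densa\mubb$ for every $z$, and this is never proved.

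\textbf{The missing idea is log-concavity.}
\begin{enumerate}[label=(\roman*)]
\item $\densa\mubb$ is the Gaussian measure $\mubb$ restricted to the convex set $\{z\in\coec\mid \bar z_a\ge 0\}$, so it is log-concave. By \cite{ASZ09}, Remark 7.6, the associated semigroup is therefore strong Feller.
\item Set $\Lambda_0\coloneqq\{X^a_t\in\coec \text{ for a.e.\ } t\}$. By Step 2, $\Prob_\cdot(\Lambda_0)=1$ quasi-everywhere, hence $\densa\mubb$-a.e.
\item The Markov property gives $\Prob_z(\theta_t^{-1}\Lambda_0)=\E_z\big[\Prob_{X^a_t}(\Lambda_0)\big]$. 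This function is continuous in $z$ and equals $1$ $\densa\mubb$-a.e.
\item Since $\densa\mubb$ has full support (Lemma \ref{lem:supp_meas_dens_rho}), it equals $1$ for every $z$ and every $t>0$; compare \cite{DPR02}, Lemma 7.1.
\item Finally $\theta_{1/n}^{-1}\Lambda_0\downarrow\Lambda_0$ gives $\Prob_z(\Lambda_0)=1$ for all $z$.
\end{enumerate}
Strong Feller plus full support is exactly what yields the absolute continuity of $p^a_t(z,\cdot)$ for every $z$ that you were aiming for. So your reduction was the right one; what is missing is the tool to establish it. This also explains why only quasi-every $z$ is claimed for $X$: $\dens\mubb$ is not log-concave.
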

\begin{proof}
	For this proof, we closely follow the strategy of the proof of \cite{RZZ12}, Theorem 6.5: Due to \cite{FOT11}, (5.1.13), the transfer method described in \cite{MR92}, Chapter 6 (from now on, the referred transfer method is applied every time when we cite or use concepts of \cite{FOT11}, and we don't point this out explicitly anymore), the fact that the PCAF $(t)_t$ and $\densa\mubb$ are in Revuz correspondence and the facts that $\mubb(\coec)=1$ and $p_s 1 = 1$ for all $s>0$, we obtain
	\begin{align*}
		\E_{1\cdot(\densa\mubb)} \left[ \int_0^t \ind_{\coec^c}(X^a_s) \, ds \right] &= \int_0^t \int_{\ltwo} p_s 1 \, d(\ind_{\coec^c}\cdot(\densa\mubb)) \, ds\\
		&= t \, \densa\mubb(\coec^c) = 0
	\end{align*}
	for all $t>0$. An application of the monotone convergence theorem (alongside utilizing Tonelli's theorem for showing measurability of the integrands) subsequently implies
	\begin{equation*}
		\E_{\densa\mubb} \left[ \int_0^{\infty} \ind_{\coec^c}(X^a_s) \, ds \right] = 0.
	\end{equation*}
	By virtue of the Markov property of $X^a$, one can argue that the function
	\begin{equation*}
		\ltwo \incl z \mapsto \E_{z} \left[ \int_0^{\infty} \ind_{\coec^c}(X^a_s) \, ds \right] \in \R
	\end{equation*}
	is $0$-excessive, and hence finely continuous, see \cite{FOT11}, Theorem A.2.7. Now, arguing with \cite{FOT11}, Lemma 4.1.5, we obtain
	\begin{equation*}
		\E_z \left[ \int_0^{\infty} \ind_{\coec^c}(X^a_s) \, ds \right] = 0, \text{ i.e.\ } \Prob_z \left\{\int_0^{\infty} \ind_{\coec^c}(X^a_s) \, ds = 0 \right\} = 1
	\end{equation*}
	for $\mathcal{E}^a$-q.e.\ $z\in\ltwo$. This result can be enhanced to every $z\in\ltwo$: Due to measurability of $X^a$ and Tonelli's theorem, the set
	\begin{equation*}
		\Lambda_0 \coloneqq \{X_t^a\in\coec \text{ for a.e. } t\in [0,\infty)\}
	\end{equation*}
	is measurable, and the previous equation implies that $\Prob_z(\Lambda_0)=1$ for $\mathcal{E}^a$-q.e.\ $z\in\ltwo$. Furthermore, the measure $\densa\mubb$ is log-concave, which follows from the log-concavity of the Gaussian measure $\mubb$ and the convexity of $\{z\in\coec \mid z_a\geq 0\}\in\borel(\ltwo)$. Thus, \cite{ASZ09}, Remark 7.6, yields that the semigroup associated with $X^a$ is strong Feller. Arguing as in the proof of \cite{DPR02}, Lemma 7.1, this yields
	\begin{equation*}
		\Prob_z\{\theta_t^{-1}\Lambda_0\} = 1
	\end{equation*}
	for all $z\in\ltwo$ and $t>0$. Since $\theta_{1/n}^{-1}\Lambda_0 \downarrow \Lambda_0$ as $n\to\infty$, continuity of measures from above yields $\Prob_z(\Lambda_0)=1$ for all $z\in\ltwo$, which proves the claim for $X^a$.
	
	With the same arguments, one can show that
	\begin{equation*}
		\Prob_z[X_t \in \coec \text{ for a.e. } t\in [0,\infty)] = 1
	\end{equation*}
	for $\mathcal{E}$-q.e.\ $z\in\ltwo$.
\end{proof}

\begin{remark}
	Since $\dens\mubb$ is not log-concave (contrary to $\densa\mubb$), which is straightforward to check, the strategy applied to prove Lemma \ref{lem:process_continuity} does not provide that the semigroup associated with $X$ is strong Feller, and therefore the extension argument to general $z\in\ltwo$ breaks down.
\end{remark}

\begin{lemma}
	The process $X^a$ is almost surely greater than or equal to zero at $a$, i.e.\ it holds
	\begin{equation*}
		\Prob_z[\mkern1.5mu \overline{\mkern-1.5mu X_t^a \mkern-1.5mu}\mkern1.5mu(a) \geq 0 \text{ for a.e. } t\in [0,\infty)] = 1 \quad \text{for all } z\in\ltwo.
	\end{equation*}
	Further, in the sense of random measures on $[0,\infty)$, it holds
	\begin{equation*}
		\ind_{ \{ z\in\coec \mid \bar{z}_a=0 \} }(X^a_s) \, d\pcafx{s} = d\pcafx{s} \quad \text{$P^a_z$-a.s.\ for $\mathcal{E}^a$-q.e.\ } z\in\ltwo. 
	\end{equation*}
\end{lemma}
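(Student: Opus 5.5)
The plan is to copy the proof of Lemma \ref{lem:process_continuity}, with $\coec^c$ replaced by a set that is null for $\densa\mubb$. Then, for the second claim, I will use the support result of Lemma \ref{lem:supp_lmeasx} together with the Revuz correspondence.

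\textbf{First claim.} Set
\begin{equation*}
	N \coloneqq \coec^c \cup \{z\in\coec \mid \bar{z}_a<0\} \in \borel(\ltwo).
\end{equation*}
Since $\densa(z)=0$ for every $z\in N$, we have $\densa\mubb(N)=0$. Exactly as in Lemma \ref{lem:process_continuity}, I use \cite{FOT11}, (5.1.13), the Revuz correspondence between the PCAF $(t)_t$ and $\densa\mubb$, and $p_s1=1$. This gives
\begin{equation*}
	\E_{\densa\mubb}\Big[\int_0^\infty \ind_N(X_s^a)\,ds\Big]=0 .
\end{equation*}
The function $z\mapsto \E_z[\int_0^\infty \ind_N(X^a_s)\,ds]$ is $0$-excessive, hence finely continuous. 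By \cite{FOT11}, Lemma 4.1.5, it therefore vanishes $\mathcal{E}^a$-q.e. To upgrade this to every starting point, I use the strong Feller property of the semigroup. This holds because $\densa\mubb$ is log-concave, via \cite{ASZ09}, Remark 7.6. Consider the event
\begin{equation*}
	\Lambda_0 \coloneqq \{X^a_t\notin N \text{ for a.e. } t\}.
\end{equation*}
As in \cite{DPR02}, Lemma 7.1, strong Feller gives $\Prob_z(\theta_t^{-1}\Lambda_0)=1$ for all $z$ and all $t>0$. Since $\theta_{1/n}^{-1}\Lambda_0\downarrow\Lambda_0$, we obtain $\Prob_z(\Lambda_0)=1$ for all $z$. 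On $\Lambda_0$, $X^a_t\in\coec$ and $\overline{X^a_t}(a)\ge0$ for a.e.\ $t$, which is the claim.

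\textbf{Second claim.} Let $Z\coloneqq\{z\in\coec\mid \bar z_a=0\}$. This set is Borel in $\ltwo$: it is closed in $\co$, and $\iota\rvert_{\coec}$ is a Borel isomorphism by Remark \ref{rem:embeddings_and_measurability}. By Lemma \ref{lem:supp_lmeasx}, $\setza_0\subseteq Z$ has full $\vmeasx$-measure, so $\vmeasx(Z^c)=0$. The measure $\ind_{Z^c}\vmeasx=0$ is smooth, and its associated PCAF is $\int_0^t\ind_{Z^c}(X^a_s)\,d\pcafx{s}$. Because the Revuz correspondence is one-to-one up to equivalence, this PCAF vanishes $P_z^a$-a.s.\ for q.e.\ $z$. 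Concretely, \cite{FOT11}, (5.1.13) gives $\E_{\densa\mubb}[\int_0^t \ind_{Z^c}(X_s^a)\,d\pcafx{s}]=t\,\vmeasx(Z^c)=0$. The excessive-function and fine-continuity argument from the first part then yields the statement for $\mathcal{E}^a$-q.e.\ $z$. Consequently
\begin{equation*}
	\ind_Z(X^a_s)\,d\pcafx{s}=d\pcafx{s}
\end{equation*}
as random measures.

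\textbf{Main obstacle.} The delicate step is the bookkeeping for the PCAF. One has to make sure that $\vmeasx$ is a smooth measure (it is finite, and it charges no exceptional set, as the setting of Corollary \ref{cor:ProcDecomp} provides). One also has to handle the passage from the $\densa\mubb$-integrated statement to the q.e.\ statement for the additive functional. For this I would invoke the transfer method of \cite{MR92}, Chapter 6, together with \cite{FOT11}, Theorem 5.1.3 and Lemma 5.1.10, which say that the PCAF of a zero measure is zero outside an exceptional set.
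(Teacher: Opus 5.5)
Your proposal is correct and follows the paper's proof essentially step for step. For the first claim, you rerun the argument of Lemma \ref{lem:process_continuity} with the $\densa\mubb$-null set $\{z\in\coec\mid\bar z_a\ge 0\}^c$, including the strong Feller upgrade to every starting point. For the second claim, you use \cite{FOT11}, (5.1.13), to get $t\,\vmeasx(Z^c)=0$ and then apply the same excessive-function and q.e.\ argument.

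Your choice $Z=\{z\in\coec\mid\bar z_a=0\}$, with $\vmeasx(Z^c)=0$ read directly off the full-measure statement of Lemma \ref{lem:supp_lmeasx}, is in fact the right set for the claimed identity. The paper's written proof uses $\bar z_a\geq 0$ at this point, which would only give a weaker statement.
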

\begin{proof}
	The both statements follow by an adaption of the proof of Lemma \ref{lem:process_continuity}: Considering that $\densa\mubb(\{z\in\coec\mid \bar{z}_a \geq 0\}^c) = 0$, the first statement is a straightforward enhancement of the aforementioned proof. Furthermore, since the PCAF $\pcafx{}$ is in Revuz correspondence with the variation measure $\vmeasx$ and $\supp \vmeasx = \{z\in\coec\mid\bar{z}_a\geq 0\} =: \setza_0^{\ast}$, see Lemma \ref{lem:supp_lmeasx}, we observe that
	\begin{equation*}
		\E_{1\cdot(\densa\mubb)} \left[ \int_0^t (1-\ind_{\setza_0^{\ast}}(X^a_s)) \, d\pcafx{s} \right]
		= t \, \vmeasx((\setza_0^{\ast})^c) = 0,
	\end{equation*}
	and, similarly as in the proof of Lemma \ref{lem:process_continuity}, we infer that
	\begin{align*}
		&\int_0^{\infty} (1-\ind_{\setza_0^{\ast}}(X^a_s)) \, d\pcafx{s} = 0 \quad \text{$P^a_z$-a.s.\ for $\mathcal{E}$-q.e.\ } z\in\ltwo \text{ and all } t>0\\
		\Rightarrow &\int_0^{t} \ind_{\setza_0^{\ast}}(X^a_s) \, d\pcafx{s} = \int_0^{t} 1 \, d\pcafx{s} \quad \text{$P^a_z$-a.s.\ for $\mathcal{E}^a$-q.e.\ } z\in\ltwo \text{ and all } t>0,
	\end{align*}
	which yields the claim.
\end{proof}
\begin{remark}[Weakly solved SPDEs]
	The following is a purely heuristic (!) remark which nonetheless provides some useful intuition about the behavior of the considered processes. Heuristically, the process $X^a$ weakly solves the SPDE
	\begin{equation*}
		dX_t^a = \frac{1}{2} \Delta X^a_t \, dt + dW_t + \delta_a \otimes dl_t^{0,X_{\cdot}^a(a)}.
	\end{equation*}
	Intuitively, the drift term consists of a singular, upwards ``Delta force'' at location $a$ which acts exactly when the process reaches zero at $a$, causing $\overline{X^a_{\cdot}}(a)$ to almost surely stay non-negative. Further, again heuristically, the process $X$ weakly solves the SPDE
	\begin{equation*}
		dX_t = \frac{1}{2} \Delta X_t \, dt + dW_t + dl_t^{0,X_{\cdot}},
	\end{equation*}
	where the drift term describes an ``upwards force'' at location $x\in(0,1)$ exactly when the process is zero at $x$.
	
	These highly ill-defined drift terms can be found by arguing as if the strictly distributional RHSs of the respective IbPFs could be written in terms of a scalar product, and then applying standard techniques for deriving the weakly solved SPDE from the respective IbPF. We want to stress that the last rigorous statement towards the SPDEs solved weakly by the considered processes are the Fukushima decompositions found in Corollary \ref{cor:ProcDecomp}.
\end{remark}

\begin{remark}[Generalizations, Discussion, Open Problems]
	Determining the support of $\vmeas$ is still an open problem. Moreover, the behavior of $\pcaf{}$ needs more thorough examination. Further, it is worth investigating whether $\dens \in \mathrm{BV}(\ltwo, \ltwo)$ holds, as there are valid indications that this might be the case, potentially providing even more regularity of $\dens$.
\end{remark}

\appendix

\section{Some Technical Results}\label{appendix}

\begin{lemma}\label{lem:DDapprox}
	(An Approximation for Donsker's Delta) For all $x\in (0,1)$ it holds that
	\begin{equation}\label{eqn:DDapprox_conv}
		L_+^2(\SchwartzDistribs, \mu) \incl \gmoll(\bb_x) \xrightarrow{\varepsilon \downarrow 0} \DDx \;\,\text{in $\HidaDistribs$},
	\end{equation}
	and for all $\varepsilon >0$, $x\in (0,1)$ and $\varphi \in \SchwartzFcts$ it holds
	\begin{align*}
		&\mathrm{T}(\gmoll(\langle q_x, \cdot \rangle))(\varphi)
		= \frac{1}{\sqrt{2\pi (x(1-x)+\varepsilon)}}\\
		&\quad\quad\quad \cdot\exp\left( -\frac{1}{2}\int_{\R}\varphi^2ds + \left(\frac{1}{2x(1-x)}-\frac{\varepsilon}{2(x(1-x)+\varepsilon)}\right) \left(\int_{\R} q_x \varphi ds\right)^2\right),
	\end{align*}
	and further, for all $z\in\C$, we have
	the bound
	\begin{equation}\label{eqn:ufunct_bounded}
		|\mathrm{T}(\gmoll(\langle q_x, \cdot \rangle))(z\varphi)|\leq \frac{1}{\sqrt{2\pi x(1-x)}}\exp\left(\frac{3}{2}|z|^2\|\varphi\|_{L^2(\R)}^2\right).
	\end{equation}
\end{lemma}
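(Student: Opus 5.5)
The plan is to compute the $\mathrm{T}$-transform of $\gmoll(\langle q_x,\cdot\rangle)$ explicitly by writing the Gaussian mollifier through its Fourier representation. From that formula we read off the exponential bound \eqref{eqn:ufunct_bounded}, and then apply Theorem \ref{thm:conv_in_hidaspace} in its $\mathrm{T}$-transform version. Throughout, write $\sigma^2\coloneqq x(1-x)=\|q_x\|_{L^2(\R)}^2$ and $c_\varphi\coloneqq\int_\R q_x\varphi\,ds$.

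\emph{Step 1 (explicit $\mathrm{T}$-transform).} We use $\gmoll(y)=\frac{1}{2\pi}\int_\R \exp(i\lambda y-\varepsilon\lambda^2/2)\,d\lambda$. Since $\gmoll(\langle q_x,\cdot\rangle)\in L^2(\mu)$, its $\mathrm{T}$-transform is the genuine integral $\int \exp(i\langle\varphi,\omega\rangle)\gmoll(\langle q_x,\omega\rangle)\,d\mu(\omega)$. The integrand in $(\lambda,\omega)$ is absolutely integrable because $e^{-\varepsilon\lambda^2/2}$ is integrable, so Fubini gives
\begin{equation*}
\frac{1}{2\pi}\int_\R e^{-\varepsilon\lambda^2/2}\int \exp(i\langle \varphi+\lambda q_x,\omega\rangle)\,d\mu\,d\lambda=\frac{1}{2\pi}\int_\R \exp\Big(-\tfrac12\|\varphi\|^2-\lambda c_\varphi-\tfrac12\lambda^2(\sigma^2+\varepsilon)\Big)d\lambda .
\end{equation*}
Here the inner integral uses the characteristic functional of $\mu$, extended to $q_x\in L^2(\R)$ via \eqref{eqn:wna_image_meas_lem}. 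Completing the square yields
\begin{equation*}
\frac{1}{\sqrt{2\pi(\sigma^2+\varepsilon)}}\exp\Big(-\tfrac12\|\varphi\|^2+\frac{c_\varphi^2}{2(\sigma^2+\varepsilon)}\Big).
\end{equation*}
Note that $\frac{1}{2\sigma^2}-\frac{\varepsilon}{2\sigma^2(\sigma^2+\varepsilon)}=\frac{1}{2(\sigma^2+\varepsilon)}$, so the coefficient of $c_\varphi^2$ should read $\frac{1}{2(\sigma^2+\varepsilon)}$. I would check the displayed formula against this and regard its middle term as a typographical slip. The right-hand side is entire in $\varphi\mapsto z\varphi$, so it is also the ray-analytic extension.

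\emph{Step 2 (uniform bound).} For $z\in\C$, the modulus of the exponential is at most $\exp\big(\tfrac12|z|^2\|\varphi\|^2+\tfrac{|z|^2c_\varphi^2}{2\sigma^2}\big)$. Cauchy--Schwarz gives $c_\varphi^2\le\sigma^2\|\varphi\|^2$, so this is at most $\exp(|z|^2\|\varphi\|^2)\le\exp(\tfrac32|z|^2\|\varphi\|^2)$. The prefactor is at most $(2\pi\sigma^2)^{-1/2}$. This proves \eqref{eqn:ufunct_bounded}, uniformly in $\varepsilon>0$, with $p=0$, or any $p$ since $\|\cdot\|_{L^2}\le\|\cdot\|_p$.

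\emph{Step 3 (convergence and identification of the limit).} As $\varepsilon\downarrow0$ the expressions converge pointwise to
\begin{equation*}
U(\varphi)\coloneqq(2\pi\sigma^2)^{-1/2}\exp\big(-\tfrac12\|\varphi\|^2+c_\varphi^2/(2\sigma^2)\big),
\end{equation*}
so they are Cauchy sequences. Theorem \ref{thm:conv_in_hidaspace} ($\mathrm{T}$-version) together with Step 2 gives convergence in $\HidaDistribs$ to $\mathrm{T}^{-1}U$.

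It remains to check that $\mathrm{T}^{-1}U=\DDx$. Using the Bochner-integral definition of Donsker's delta and Theorem \ref{thm:bochner_int_in_hidaspace}, $\mathrm{T}$ commutes with the $d\lambda$-integral. This gives $\mathrm{T}\DDx(\varphi)=\frac{1}{2\pi}\int_\R\exp(-\tfrac12\|\varphi+\lambda q_x\|^2)\,d\lambda$, which is exactly the $\varepsilon=0$ case of Step 1 and equals $U(\varphi)$. As a cross-check, this agrees with \eqref{eqn:Strafo_DD} via $\mathrm{T}\Phi(\varphi)=e^{-\|\varphi\|^2/2}\mathrm{S}\Phi(i\varphi)$. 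Injectivity of $\mathrm{T}$ then finishes the proof.

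The main care point is Step 1: justifying Fubini, and the fact that $\langle q_x,\cdot\rangle$ is only an $L^2(\mu)$-object with $q_x\notin\SchwartzFcts$. This is handled by \eqref{eqn:wna_image_meas_lem} applied to the pair $(\varphi,q_x)$.
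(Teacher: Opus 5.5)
Your proof is correct and follows the same route as the paper's (very terse) proof: you compute the $\mathrm{T}$-transform via the Fourier representation of $\gmoll$, read off the $\varepsilon$-uniform bound \eqref{eqn:ufunct_bounded}, and apply Theorem \ref{thm:conv_in_hidaspace}, while your identification of the limit as $\DDx$ (commuting $\mathrm{T}$ with the Bochner integral via Theorem \ref{thm:bochner_int_in_hidaspace}) supplies a step the paper leaves implicit. You are also right about the coefficient: it equals $\frac{1}{2(x(1-x)+\varepsilon)}=\frac{1}{2x(1-x)}-\frac{\varepsilon}{2x(1-x)(x(1-x)+\varepsilon)}$, so the displayed second term is missing a factor $x(1-x)$ in its denominator, and this slip affects neither the bound nor the convergence.
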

\begin{proof}
	The $\mathrm{T}$-transform of $\gmoll(\langle q_x, \cdot \rangle)$ follows from a straightforward modification of the standard calculation of the $\mathrm{T}$-transform of $\DDx$. Further, \eqref{eqn:ufunct_bounded} follows immediately, and \eqref{eqn:DDapprox_conv} follows by an application of Theorem \ref{thm:conv_in_hidaspace}.
\end{proof}

\begin{lemma}\label{lem:int_onefold_moll}
	For all $h\in \ho \cap H^2 = D(A)$, the Bochner integral $\int_{(0,1)} \bar{h}_x \gmoll(\ddext(\cdot))\, dx$ is a well-defined element of $L^2(\ltwo, \mubb)$, i.e.\ it holds
	\begin{equation}\label{lem:BochInt_integrand}
		\left[I: (0,1) \to L^2(\ltwo, \mubb): x \mapsto \bar{h}_x \gmoll(\ddext(\cdot)) \right] \in \mathcal{L}(\lambda\rvert_{(0,1)}, L^2(\ltwo, \mubb)).
	\end{equation}
\end{lemma}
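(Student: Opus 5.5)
To prove that $I$ is Bochner integrable with values in $L^2(\ltwo,\mubb)$, I will use the standard criterion: $I$ must be strongly measurable and $\int_{(0,1)} \|I(x)\|_{L^2(\ltwo,\mubb)}\,dx<\infty$. Since $L^2(\ltwo,\mubb)$ is separable (as $\ltwo$ is a separable metric space with its Borel $\sigma$-algebra), Pettis' theorem reduces strong measurability to weak measurability. Alternatively, I will show directly that $I$ is continuous on $(0,1)$, which gives strong measurability at once.

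\textbf{Continuity.} Fix $x_0\in(0,1)$ and let $x\to x_0$. For $\mubb$-a.e.\ $z$ we have $z\in\coec$, so $\ddext(z)=\bar z_x\to\bar z_{x_0}$ by continuity of $\bar z$. Also $\bar h_x\to\bar h_{x_0}$, because $h\in\ho$ has a continuous representative. Since $\gmoll$ is continuous and bounded by $(2\pi\varepsilon)^{-1/2}$, and $|\bar h_x|\le \|\bar h\|_\infty\le C\|h\|_{\ho}$, the integrand $\bar h_x\gmoll(\ddext(z))$ converges pointwise $\mubb$-a.e. It is also bounded uniformly in $x$ by the constant $C\|h\|_{\ho}(2\pi\varepsilon)^{-1/2}$. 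Dominated convergence in $L^2(\ltwo,\mubb)$ then yields $I(x)\to I(x_0)$ in $L^2$. Hence $I$ is continuous, and in particular strongly measurable.

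\textbf{Integrability.} The same uniform bound gives
\[
\|I(x)\|_{L^2(\ltwo,\mubb)}\le C\|h\|_{\ho}(2\pi\varepsilon)^{-1/2}
\]
for all $x\in(0,1)$. Since $(0,1)$ has finite Lebesgue measure, $\int_{(0,1)}\|I(x)\|_{L^2}\,dx<\infty$. This is exactly the membership \eqref{lem:BochInt_integrand}, so the Bochner integral exists in $L^2(\ltwo,\mubb)$. For fixed $\varepsilon>0$ no sharper, $\varepsilon$-uniform estimate is needed. If one were wanted, one could instead use $\E[\gmoll(\bb_x)^2]\lesssim (\varepsilon\, x(1-x))^{-1/2}$, computed via \eqref{eqn:wna_image_meas_lem}, which is integrable in $x$.

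\textbf{Identifying the integral.} I also want the Bochner integral to coincide $\mubb$-a.e.\ with the pointwise integral $z\mapsto\int_{(0,1)}\bar h_x\gmoll(\bar z_x)\,dx$, as used in Lemma \ref{lem:smoothedIbPF}. To see this, pair the Bochner integral with an arbitrary $g\in L^2(\ltwo,\mubb)$. Bounded functionals commute with Bochner integrals, so the pairing equals $\int_{(0,1)}\int_{\ltwo} g\,\bar h_x\gmoll(\bar z_x)\,d\mubb\,dx$. The joint measurability of $(x,z)\mapsto \bar h_x\gmoll(\bar z_x)$ was established in Lemma \ref{lem:welldef_vvmeaseps}, so Fubini's theorem lets us swap the integrals. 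Since $g$ is arbitrary, the two functions agree $\mubb$-a.e.

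The main obstacle is essentially bookkeeping. One must handle the null set $\coec^c$, where $\ddext$ is set to zero, and this is harmless because $\mubb(\coec)=1$. Beyond that, the argument is only the continuity step above.
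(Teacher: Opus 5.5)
Your proof is correct and follows the paper's route: dominated convergence gives continuity and a uniform bound for $I$, and Pettis' measurability theorem plus the finiteness of Lebesgue measure on $(0,1)$ then give Bochner integrability. Your extra Fubini argument, identifying the Bochner integral with $z\mapsto\int_{(0,1)}\bar h_x\gmoll(\bar z_x)\,dx$, is a worthwhile supplement, since the paper uses this identification tacitly in Lemma~\ref{lem:smoothedIbPF}. One small slip in your aside: the bound $\E[\gmoll(\bb_x)^2]\lesssim(\varepsilon\,x(1-x))^{-1/2}$ is sharper in $\varepsilon$ but not $\varepsilon$-uniform, and no $\varepsilon$-uniform $L^2$ bound can exist because $\E[\gmoll(\bb_x)^2]\to\infty$ as $\varepsilon\downarrow 0$.
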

\begin{proof}
	Due to the dominated convergence theorem, $I$ is continuous and bounded. Thus, \cite{HNVW16}, Theorem 1.1.6 (Pettis measurability theorem) and Proposition 1.2.2, yield the claim.
\end{proof}

\begin{corollary}\label{cor:BInt_in_L2}
	For all $h\in \ho\cap H^2$, the Bochner integral $\int_{(0,1)} \bar{h}_x \gmoll(\bb_x) \, dx$ is a well-defined element of $L^2(\SchwartzDistribs,\mu)$.
\end{corollary}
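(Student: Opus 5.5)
The statement concerns the Bochner integral $\int_{(0,1)} \bar{h}_x \gmoll(\bb_x)\,dx$, now viewed on the white noise side, i.e.\ in $L^2(\SchwartzDistribs,\mu)$ rather than $L^2(\ltwo,\mubb)$. My plan is to transport Lemma \ref{lem:int_onefold_moll} along the map $\bb$. The composition operator
\begin{equation*}
	\Theta\colon L^2(\ltwo,\mubb)\to L^2(\SchwartzDistribs,\mu), \qquad \Theta(f)\coloneqq f\circ\bb,
\end{equation*}
is well defined and linear. It is an isometry because $\mubb=\bb(\mu)$ by definition, so the transformation theorem gives $\|f\circ\bb\|_{L^2(\mu)}=\|f\|_{L^2(\mubb)}$. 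In particular $\Theta$ is bounded.

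Next I identify the integrand. For each $x\in(0,1)$ I check that $\Theta\big(\bar{h}_x\gmoll(\ddext(\cdot))\big)=\bar{h}_x\gmoll(\bb_x)$ in $L^2(\mu)$. This holds because $\bb(\omega)\in\coec$ for every $\omega$, as the paths are surely continuous and vanish at $0$ and $1$. Hence $\ddext(\bb(\omega))=\bb_x(\omega)$ surely, and this agrees $\mu$-a.e.\ with $\langle q_x,\omega\rangle$.

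By Lemma \ref{lem:int_onefold_moll}, $I\colon x\mapsto \bar{h}_x\gmoll(\ddext(\cdot))$ is Bochner integrable with values in $L^2(\ltwo,\mubb)$. Bounded linear maps preserve strong measurability and integrability of Bochner integrands, so $\Theta\circ I\colon x\mapsto \bar{h}_x\gmoll(\bb_x)$ is Bochner integrable in $L^2(\mu)$. Moreover
\begin{equation*}
	\int_{(0,1)}\bar{h}_x\gmoll(\bb_x)\,dx=\Theta\Big(\int_{(0,1)}\bar{h}_x\gmoll(\ddext)\,dx\Big).
\end{equation*}

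A direct alternative is to check continuity and boundedness of $x\mapsto\bar{h}_x\gmoll(\bb_x)$ in $L^2(\mu)$ and then invoke the Pettis theorem as in Lemma \ref{lem:int_onefold_moll}. Boundedness is immediate, since $|\bar{h}_x\gmoll(\bb_x)|\le\|\bar h\|_\infty(2\pi\varepsilon)^{-1/2}$ and $\mu$ is a probability measure. Continuity follows from the dominated convergence theorem, using the path continuity of $\bb$ and the continuity of $\bar h$ and $\gmoll$.

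The only point needing care is the identification of the integrand with its $\mu$-a.e.\ version $\langle q_x,\cdot\rangle$. This involves an $x$-dependent null set $N_x$, but equality in $L^2(\mu)$ for each fixed $x$ is all that is required, so I expect no real obstacle.
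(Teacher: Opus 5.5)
Your proposal is correct and is essentially the paper's proof: push the Bochner-integrable map $I$ from Lemma \ref{lem:int_onefold_moll} forward through the linear isometry $f\mapsto f\circ\bb$ from $L^2(\ltwo,\mubb)$ to $L^2(\SchwartzDistribs,\mu)$, using that bounded linear operators preserve Bochner integrability and commute with the integral. Your identification of the integrand via $\ddext(\bb(\omega))=\bb_x(\omega)$ for every $\omega$ is valid, and so is your alternative route via continuity and boundedness plus Pettis.
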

\begin{proof}
	Clearly, the operator $T: L^2(\ltwo, \mubb) \to L^2(\SchwartzDistribs,\mu): f \mapsto f\circ\bb$ is linear and isometric, thus $T(I) \in \mathcal{L}(\lambda\rvert_{(0,1)}, L^2(\SchwartzDistribs, \mu))$, where $I$ is defined as in \eqref{lem:BochInt_integrand}, see e.g.\ \cite{HNVW16}, p.\ 15.
\end{proof}

\begin{lemma}\label{lem:BInt_DDx_in_Sprime}
	For all $h\in \ho\cap H^2$, the Bochner integral $\int_{(0,1)} \bar{h}_x \DDx dx$ exists as a Hida distribution.
\end{lemma}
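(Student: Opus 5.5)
We will apply Theorem~\ref{thm:bochner_int_in_hidaspace} with the measure space $(\Omega,\mathcal{A},m)=((0,1),\borel((0,1)),dx)$ and the map $\Phi:(0,1)\to\HidaDistribs$, $x\mapsto \bar{h}_x\DDx$. It is convenient to work with the $\mathrm{T}$-transform, since the theorem allows it and Lemma~\ref{lem:DDapprox} already supplies the relevant formulas. First we compute $\mathrm{T}\DDx(\varphi)$. This is either by the standard calculation, or by letting $\varepsilon\downarrow 0$ in the explicit formula of Lemma~\ref{lem:DDapprox}, which is legitimate because $\gmoll(\bb_x)\to\DDx$ in $\HidaDistribs$ and hence the $\mathrm{T}$-transforms converge pointwise. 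The result is
\begin{equation*}
\mathrm{T}\DDx(\varphi)=\frac{1}{\sqrt{2\pi x(1-x)}}\exp\Big(-\frac12\int_{\R}\varphi^2\,ds+\frac{1}{2x(1-x)}\Big(\int_{\R}q_x\varphi\,ds\Big)^2\Big).
\end{equation*}

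For the measurability condition (i), we observe that $x\mapsto \int_{\R} q_x\varphi\,ds$ is continuous on $(0,1)$, because $x\mapsto q_x\in L^2(\R)$ is continuous. Consequently $x\mapsto \bar{h}_x\,\mathrm{T}\DDx(\varphi)$ is continuous on $(0,1)$, hence Borel measurable.

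The main step is the growth bound (ii), and the obstacle is the factor $(x(1-x))^{-1}$ in the exponent, which blows up at the endpoints. Replacing $\varphi$ by $z\varphi$ with $z\in\C$, the modulus of the exponential is at most $\exp\big(\frac12|z|^2\|\varphi\|^2+\frac{|z|^2}{2x(1-x)}|(q_x,\varphi)|^2\big)$. The Cauchy--Schwarz inequality together with $\|q_x\|_{L^2(\R)}^2=x(1-x)$ gives $|(q_x,\varphi)|^2\le x(1-x)\|\varphi\|_{L^2}^2$, so the singular factor cancels exactly. We therefore obtain
\begin{equation*}
|\mathrm{T}\Phi_x(z\varphi)|\le \frac{|\bar h_x|}{\sqrt{2\pi x(1-x)}}\exp\big(|z|^2\|\varphi\|_0^2\big),
\end{equation*}
which is the same mechanism that yields \eqref{eqn:ufunct_bounded}. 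We set $B\equiv 1\in\mathcal{L}^\infty_+$ and $p=0$, and take $A(x)=\|\bar h\|_\infty(2\pi x(1-x))^{-1/2}$. This $A$ is integrable on $(0,1)$, because $\bar h$ is bounded (indeed $h\in\ho$ by Subsection~\ref{sect:misc_intro}) and $(x(1-x))^{-1/2}$ is integrable.

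Theorem~\ref{thm:bochner_int_in_hidaspace} then shows that $\Phi$ is Bochner integrable in some $(\mathcal{H}_{-p'})$ and that $\int_{(0,1)}\bar h_x\DDx\,dx\in\HidaDistribs$. Moreover its $\mathrm{T}$-transform equals $\int_{(0,1)}\bar h_x\,\mathrm{T}\DDx(\varphi)\,dx$. This last identity is what we will need later to prove convergence of the mollified integrals, which we expect to follow via dominated convergence combined with Theorem~\ref{thm:conv_in_hidaspace} and the bound \eqref{eqn:ufunct_bounded}.
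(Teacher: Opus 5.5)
Your proposal is correct and follows the paper's own route. The paper's proof only says that the claim is a straightforward application of Theorem~\ref{thm:bochner_int_in_hidaspace}, and you supply the details it omits: continuity of $x\mapsto \bar h_x\,\mathrm{T}\DDx(\varphi)$ for measurability, and the Cauchy--Schwarz bound $|(q_x,\varphi)|^2\le x(1-x)\|\varphi\|_{L^2(\R)}^2$, which cancels the singular factor and leaves the integrable weight $A(x)=\|\bar h\|_\infty(2\pi x(1-x))^{-1/2}$ with $B\equiv 1$, $p=0$. Your formula for $\mathrm{T}\DDx$ is also right; it agrees with \eqref{eqn:Strafo_DD} through $\mathrm{T}\Phi(\varphi)=e^{-\frac12\langle\varphi,\varphi\rangle}\,\mathrm{S}\Phi(i\varphi)$.
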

\begin{proof}
	The claim follows by a straightforward application of Theorem \ref{thm:bochner_int_in_hidaspace}.
\end{proof}

\begin{lemma}\label{lem:conv_int_onefold_moll}
	Let $h\in \ho\cap H^2$. Then it holds
	\begin{equation*}
		\int_{(0,1)} \bar{h}_x\gmoll(\bb_x) \, dx
		\xrightarrow{\varepsilon\downarrow 0}
		\int_{(0,1)} \bar{h}_x \DDx \, dx \;\,\text{in $\HidaDistribs$}.
	\end{equation*}
\end{lemma}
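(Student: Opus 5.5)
We will apply Theorem \ref{thm:conv_in_hidaspace} with the $\mathrm{T}$-transform, along an arbitrary sequence $\varepsilon_n\downarrow 0$. Set $\Phi_n \coloneqq \int_{(0,1)} \bar{h}_x\gamma_{\varepsilon_n}(\bb_x)\,dx$ and $\Phi\coloneqq\int_{(0,1)}\bar{h}_x\DDx\,dx$.

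First, we identify the $\mathrm{T}$-transforms of the Bochner integrals. By Corollary \ref{cor:BInt_in_L2}, $\Phi_n\in L^2(\mu)\subset\HidaDistribs$. Since $\exp(i\langle\varphi,\cdot\rangle)\in\HidaFcts$ and the dual pairing is a continuous linear functional, it commutes with the Bochner integral. Hence
\begin{equation*}
\mathrm{T}\Phi_n(\varphi)=\int_{(0,1)}\bar{h}_x\,\mathrm{T}(\gamma_{\varepsilon_n}(\langle q_x,\cdot\rangle))(\varphi)\,dx .
\end{equation*}
Alternatively, one may apply Theorem \ref{thm:bochner_int_in_hidaspace} directly on $((0,1),dx)$. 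Its hypotheses hold: the integrand is measurable in $x$ because the explicit formula in Lemma \ref{lem:DDapprox} is continuous in $x$, and the bound \eqref{eqn:ufunct_bounded} supplies $A(x)=\|\bar h\|_\infty(2\pi x(1-x))^{-1/2}\in L^1(0,1)$, $B=3/2$ and $p=0$. Likewise, Lemma \ref{lem:BInt_DDx_in_Sprime} via Theorem \ref{thm:bochner_int_in_hidaspace} gives
\begin{equation*}
\mathrm{T}\Phi(\varphi)=\int_{(0,1)}\bar{h}_x\,\mathrm{T}\DDx(\varphi)\,dx .
\end{equation*}

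Next, we verify the two conditions of Theorem \ref{thm:conv_in_hidaspace}. The uniform bound (ii) is immediate from \eqref{eqn:ufunct_bounded}:
\begin{equation*}
|\mathrm{T}\Phi_n(z\varphi)|\le \|\bar h\|_\infty\int_0^1\frac{dx}{\sqrt{2\pi x(1-x)}}\;\exp\Big(\tfrac32|z|^2\|\varphi\|_{L^2(\R)}^2\Big),
\end{equation*}
with $p=0$ and constants independent of $n$. For the pointwise convergence (i), fix $\varphi$. Lemma \ref{lem:DDapprox} implies that for every fixed $x\in(0,1)$ the integrand $\bar{h}_x\,\mathrm{T}(\gamma_{\varepsilon_n}(\bb_x))(\varphi)$ converges to $\bar h_x\,\mathrm{T}\DDx(\varphi)$. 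Indeed, Hida convergence implies convergence of the $\mathrm{T}$-transforms, or one can let $\varepsilon\downarrow0$ directly in the explicit formula. The same bound \eqref{eqn:ufunct_bounded} with $z=1$ gives the $n$-independent integrable majorant $\|\bar h\|_\infty(2\pi x(1-x))^{-1/2}e^{\frac32\|\varphi\|^2}$. Dominated convergence then yields $\mathrm{T}\Phi_n(\varphi)\to\mathrm{T}\Phi(\varphi)$, so in particular the sequence is Cauchy.

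Theorem \ref{thm:conv_in_hidaspace} now gives $\Phi_n\to\mathrm{S}^{-1}$ (resp.\ $\mathrm{T}^{-1}$) of the pointwise limit. That limit is $\mathrm{T}\Phi$, so by injectivity of the $\mathrm{T}$-transform the limit equals $\Phi$. Since $\varepsilon_n\downarrow0$ was arbitrary, the convergence holds as $\varepsilon\downarrow0$.

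The only delicate point is the endpoint singularity $(x(1-x))^{-1/2}$, which rules out a uniform-in-$x$ estimate. It is harmless, however, because it is integrable and independent of $\varepsilon$, so the argument must consistently use \eqref{eqn:ufunct_bounded} as the majorant. A minor technicality is that the bound must hold for complex $z$. This is already included in \eqref{eqn:ufunct_bounded}, since the factor $\frac{\varepsilon}{x(1-x)+\varepsilon}\le 1$ keeps the exponent controlled uniformly in $\varepsilon$.
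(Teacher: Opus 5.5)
Your proposal is correct and follows the paper's route. The paper's proof is only the one-line remark that the claim follows from Theorem \ref{thm:conv_in_hidaspace}, and you supply the details it omits: the $n$-uniform bound from \eqref{eqn:ufunct_bounded} integrated against the integrable weight $(x(1-x))^{-1/2}$, pointwise convergence by dominated convergence, and identification of the limit through Theorem \ref{thm:bochner_int_in_hidaspace} together with injectivity of the $\mathrm{T}$-transform.
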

\begin{proof}
	The claim follows by a straightforward application of Theorem \ref{thm:conv_in_hidaspace}.
\end{proof}

\begin{lemma}\label{lem:vecmeas_variationformula}
	(Variation Formula and Variation of Weak Limits) Let X be a metric space with Borel $\sigma$-field $\borel(X)$ and let $B$ be a Banach space. Then the following holds:
	\begin{itemize}
		\item[(i)] For all $V \in \ca(\borel(X),B)$ it holds
		\begin{equation}\label{eqn:variation_formula}
			\var{V} = \sup\bigg\{ \sum_{i \in I} \Big\| \int_X f_i \, dV \Big\|_B \,\Big|\, (f_i)_{i\in I} \text{ is a partition of unity} \bigg\},
		\end{equation}
		where we call $(f_i)_{i \in I}$ a \emph{partition of unity} iff $(f_i)_{i \in I}$ is a family of continuous (bounded) functions in $[0,1]^X$ with a finite index set $I$ s.th.\ it holds $\sum_{i\in I} f_i = 1$.
		\item[(ii)] Let $(V_n)$ be a sequence of vector measures in $\ca(\borel(X),B)$, converging weakly to the vector measure $V \in \ca(\borel(X),B)$. Then it holds $\var{V} \leq \liminf_n \var{V_n}$. In particular, if $(V_n)$ is uniformly bounded w.r.t.\ $\var{\cdot}$, then $V$ is of bounded variation.
	\end{itemize}
\end{lemma}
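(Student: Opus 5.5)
We prove (i) by two inequalities and then deduce (ii) from (i), since (i) expresses $\var{V}$ purely through the integrals $\int_X f\,dV$ of continuous bounded functions.

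For "$\geq$" in (i), take a partition of unity $(f_i)_{i\in I}$. Approximate each $f_i$ uniformly by simple functions $\sum_n a_n\ind_{E_n}$ with $a_n\in[0,1]$ on a common finite measurable partition $(E_n)$ of $X$; since the $f_i$ are $[0,1]$-valued and sum to $1$, the values can be chosen with $\sum_i a^{(i)}_n\le 1$ for each $n$. Then
\begin{equation*}
\sum_i \Big\|\sum_n a^{(i)}_n V(E_n)\Big\|_B\le \sum_n\sum_i a^{(i)}_n\|V(E_n)\|_B\le \var{V}.
\end{equation*}
Passing to the uniform limit, using $\|\int f\,dV\|_B\le \|f\|_\infty\svar{V}$ and $\svar{V}\le\var{V}$, gives $\sum_i\|\int f_i\,dV\|_B\le\var{V}$.

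For "$\leq$", fix a finite measurable partition $(E_n)_{n=1}^N$ of $X$ and $\eta>0$. Since $\var{V}$ is a finite Borel measure on a metric space, hence regular, choose closed sets $K_n\subseteq E_n$ with $\var{V}(E_n\setminus K_n)<\eta/N$. The $K_n$ are disjoint and closed, hence positively separated only in the weak sense. If $\var{V}$ is infinite, first reduce to $\var{V}(E)<\infty$ on a suitable piece, or argue with $\svar{V}$ and the countable additivity of $V$.

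Take disjoint open neighbourhoods $U_n\supseteq K_n$; these exist because a metric space is normal. Shrink the $U_n$ so that $\var{V}(U_n\setminus K_n)<\eta/N$ by outer regularity. Urysohn functions $g_n$ with $\ind_{K_n}\le g_n\le\ind_{U_n}$ then give $f_n:=g_n$ for $n\le N$ and $f_0:=1-\sum_n g_n\in[0,1]$, which together form a partition of unity. We have
\begin{equation*}
\Big\|\int f_n\,dV-V(E_n)\Big\|_B\le \var{V}(U_n\triangle E_n)\le 2\eta/N.
\end{equation*}
Hence $\sum_n\|V(E_n)\|\le\sum_{i}\|\int f_i\,dV\|+2\eta$, and letting $\eta\downarrow 0$ and taking the supremum yields "$\leq$".

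The main obstacle is this regularity step: inner and outer approximation by closed and open sets for the possibly infinite measure $\var{V}$. I would try first to handle it by working with $\var{V}$ on sets of finite variation, or, when $\var{V}(X)=\infty$, by showing that the right-hand side of (i) is also infinite via the same construction applied to partitions with large sums.

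For (ii), fix a partition of unity $(f_i)_{i\in I}$ and $\eta>0$. By Hahn--Banach pick $\varphi_i\in B'$ of norm at most $1$ with $\varphi_i(\int f_i\,dV)\ge\|\int f_i\,dV\|_B-\eta$. Weak convergence gives
\begin{equation*}
\sum_i\varphi_i\Big(\int f_i\,dV\Big)=\lim_n\sum_i\varphi_i\Big(\int f_i\,dV_n\Big)\le\liminf_n\sum_i\Big\|\int f_i\,dV_n\Big\|_B\le\liminf_n\var{V_n},
\end{equation*}
where the last step is "$\ge$" of (i) for $V_n$. Taking the supremum over partitions of unity and using (i) for $V$ gives $\var{V}\le\liminf_n\var{V_n}$, and the final assertion of (ii) follows immediately.
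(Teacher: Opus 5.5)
Your ``$\geq$'' half of (i) and your derivation of (ii) from (i) are correct and match the paper's argument: simple minorants $g_i\le f_i$ on a common partition, and weak lower semicontinuity of the norm, which you implement via Hahn--Banach functionals.

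\textbf{The gap.} It lies in the ``$\leq$'' half of (i). You approximate the $E_n$ by closed and open sets using regularity of $\var{V}$. That presupposes $\var{V}(X)<\infty$, and you leave the case $\var{V}(X)=\infty$ open.

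That case is exactly the one that matters. In (ii) you must \emph{deduce} $\var{V}<\infty$ from $\liminf_n\var{V_n}<\infty$. So you need $\var{V}\le\sup_{(f_i)}\sum_i\|\int_X f_i\,dV\|_B$ without knowing a priori that $\var{V}$ is finite. As it stands, the ``in particular'' of (ii) is circular.

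Your suggested fallback of passing to pieces of finite variation cannot work in general. Take $V\colon\borel([0,1])\to L^2(0,1)$, $V(E)=\ind_E$. Then $\var{V}(E)=\infty$ whenever $E$ has positive Lebesgue measure. So $\var{V}$ only takes the values $0$ and $\infty$ and is not even inner regular: for $E=[0,1]\setminus\mathbb{Q}$, no closed $K\subseteq E$ has $\var{V}(E\setminus K)<\infty$.

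\textbf{The missing idea.} You need to approximate in a \emph{finite} quantity. The paper uses the semivariation. Two facts do the work:
\begin{itemize}
\item $\svar{V}(X)<\infty$ for every countably additive $V$ on a $\sigma$-algebra (Nikodym/Bartle--Dunford--Schwartz).
\item $V$ is regular w.r.t.\ $\svar{\cdot}$ (M\"arz--Shortt, Lemma~1.2).
\end{itemize}
Your only estimate is $\|\int_X g\,dV\|_B\le\|g\|_\infty\,\svar{V}(\{g\neq0\})$. Hence your Urysohn construction goes through verbatim with $\var{V}$ replaced by $\svar{V}$.

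Alternatively, in the spirit of your proof of (ii), fix the partition and choose $\varphi_n\in B'$ with $\|\varphi_n\|_{B'}\le1$ and $\varphi_n(V(E_n))\ge\|V(E_n)\|_B-\eta/N$. Do the closed/open approximation in the finite regular Borel measure $\lambda\coloneqq\sum_n\var{\varphi_n(V)}(\cdot)$. Then
$\|\int_X g_n\,dV\|_B\ge\int_X g_n\,d\varphi_n(V)\ge\|V(E_n)\|_B-\eta/N-\lambda((U_n\cup E_n)\setminus K_n)$,
and no finiteness of $\var{V}$ is needed.

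\textbf{A minor slip.} The function $g_n-\ind_{E_n}$ may be nonzero on $(U_n\cap E_n)\setminus K_n$. So the error set is $(U_n\cup E_n)\setminus K_n$, not $U_n\triangle E_n$. Your bound $2\eta/N$ still holds for the correct set.
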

\begin{proof}
	The statement (i) is trivial for $\svar{V}=0$, and the case $\svar{V}=\infty$ is excluded by \cite{DU77}, pp.\ 6--7 and Corollary 19, p.\ 9. Thus it remains to consider the case $\svar{V} \in (0,\infty)$.
	
	Let $(E_i)_{i\in I}$ be a finite measurable partition of $X$, and assume w.l.o.g.\ that $I\neq\emptyset$. Let $\varepsilon > 0$. Since $V$ is regular, see \cite{MS94}, Lemma 1.2, for all $i\in I$, there exists a closed set $F_i\subseteq X$ and an open set $G_i\subseteq X$ such that $F_i \subseteq E_i \subseteq G_i$ and $\svar{V}(G_i\setminus F_i) < \varepsilon/|I|^2$. We define $U \coloneqq \bigcup_i (G_i \setminus F_i)\supseteq (\bigcup_i F_i)^c$ and observe that $\svar{V}(U)<\varepsilon/|I|$ due to sub-additivity of $\svar{V}(\cdot)$. A straightforward application of Urysohn's lemma and a normalization argument yields the existence of a partition of unity $(f_i)$ s.th.\ $f_i\rvert_{F_i} = 1$ and $f_i\rvert_{F_j} = 0$ for all $i,\,j\in I$ with $i\neq j$. Hence we obtain
	\begin{equation*}
		\Big\| V(E_i) - \int_X f_i \, dV \Big\|_B \leq \Big\| \int_X \ind_{E_i} - f_i \, dV \Big\|_B \leq \svar{V}(\supp (\ind_{E_i} - f_i)) \leq \svar{V}(U) < \frac{\varepsilon}{|I|}
	\end{equation*}
	for all $i\in I$, which yields $\sum_i \|V(E_i)\|_B - \varepsilon \leq \sum_i \| \int_X f_i \, dV \|_B.$ Thus, for every choice of a finite measurable partition $(E_i)$ and $\varepsilon >0$, there exists a partition of unity $(f_i)$ s.th.\ the previous estimate holds. This implies that $\var{V}$ is bounded above by the RHS of \eqref{eqn:variation_formula}.
	
	To prove the reverse estimate, let $(f_i)_{i \in I}$ be a partition of unity, and assume w.l.o.g.\ that $I \neq \emptyset$. Let $\varepsilon >0$. By a standard approximation argument, there exists a finite measurable partition $(E_j)_{j\in J}$ of $X$ and a family of simple functions $(g_i)_{i\in I}$ s.th.\ $g_{i} = \sum_j a_{i,j}\ind_{E_j} \in [0,1]^X$ with $g_i \leq f_i$ and $\| f_i -g_i \|_{\infty} < \varepsilon/(|I|\,\svar{V})$ for all $i\in I$. Then it holds
	\begin{equation*}
		\sum_{i\in I} \Big\| \int_X f_i \, dV \Big\|_B
		\leq \sum_{i\in I} \Big( \sum_{j\in J} |a_{i,j}| \| V(E_j) \|_B \Big) + \| f_i-g_i \|_{\infty} \svar{V}\\
		\leq \var{V} + \varepsilon,
	\end{equation*}
	where the last estimate follows from $\sum_{i} |a_{i,j}| = \sum_i g_i(x) \leq \sum_i f_i(x) = 1$ for all $j\in J$ and $x\in E_j$. Thus $\sum_i \| \int_X f_i \, dV \|_B \leq \var{V}$ holds, implying that the RHS of \eqref{eqn:variation_formula} is bounded above by $\var{V}$. This yields (i).
	
	To prove (ii), let $(f_i)_{i \in I}$ be a partition of unity and let $(V_n)_{n\in\N}$ converge weakly to $V$. Lower semicontinuity of the norm $\|\cdot\|_B$ w.r.t.\ weak convergence and (i) imply
	\begin{equation*}
		\sum_{i\in I} \Big\| \int_X f_i \, dV \Big\|_B
		\leq \liminf_{n\to\infty} \Big(\sum_{i\in I} \Big\| \int_X f_i \, dV_n \Big\|_B\Big)\\
		\leq \liminf_{n\to\infty} \var{V_n}.
	\end{equation*}
	Taking the supremum over all partitions of unity and applying (i) yields $\var{V} \leq \liminf_n \var{V_n}$.
\end{proof}

\bibliography{references}

@article {RZZ12,
	AUTHOR = {R\"ockner, Michael and Zhu, Rong-Chan and Zhu, Xiang-Chan},
	TITLE = {The stochastic reflection problem on an infinite dimensional
	convex set and {BV} functions in a {G}elfand triple},
	JOURNAL = {Ann. Probab.},
	FJOURNAL = {The Annals of Probability},
	VOLUME = {40},
	YEAR = {2012},
	NUMBER = {4},
	PAGES = {1759--1794},
	ISSN = {0091-1798,2168-894X},
	MRCLASS = {60J45 (26A45 31C25 52A07 60G60 60H15)},
	MRNUMBER = {2978137},
	DOI = {10.1214/11-AOP661},
	URL = {https://doi.org/10.1214/11-AOP661},
}

@article {GV18,
	AUTHOR = {Grothaus, Martin and Vosshall, Robert},
	TITLE = {Integration by parts on the law of the modulus of the
	{B}rownian bridge},
	JOURNAL = {Stoch. Partial Differ. Equ. Anal. Comput.},
	FJOURNAL = {Stochastic Partial Differential Equations. Analysis and
	Computations},
	VOLUME = {6},
	YEAR = {2018},
	NUMBER = {3},
	PAGES = {335--363},
	ISSN = {2194-0401,2194-041X},
	MRCLASS = {60H07 (31C25 35R60 46F25 60H40)},
	MRNUMBER = {3844653},
	DOI = {10.1007/s40072-018-0110-4},
	URL = {https://doi.org/10.1007/s40072-018-0110-4},
}

@article {MS94,
	AUTHOR = {M\"arz, Michael and Shortt, R. M.},
	TITLE = {Weak convergence of vector measures},
	JOURNAL = {Publ. Math. Debrecen},
	FJOURNAL = {Publicationes Mathematicae Debrecen},
	VOLUME = {45},
	YEAR = {1994},
	NUMBER = {1-2},
	PAGES = {71--92},
	ISSN = {0033-3883,2064-2849},
	MRCLASS = {28B05 (46G10 60B10)},
	MRNUMBER = {1291803},
	MRREVIEWER = {Donald\ L.\ Cohn},
	DOI = {10.5486/pmd.1994.1396},
	URL = {https://doi.org/10.5486/pmd.1994.1396},
}

@book {Par67,
	AUTHOR = {Parthasarathy, K. R.},
	TITLE = {Probability measures on metric spaces},
	SERIES = {Probability and Mathematical Statistics},
	VOLUME = {No. 3},
	PUBLISHER = {Academic Press, Inc., New York-London},
	YEAR = {1967},
	PAGES = {xi+276},
	MRCLASS = {60.00},
	MRNUMBER = {226684},
	MRREVIEWER = {R.\ A.\ Gangolli},
}

@book {Kle08,
    AUTHOR = {Klenke, Achim},
     TITLE = {Probability theory},
    SERIES = {Universitext},
      NOTE = {A comprehensive course,
              Translated from the 2006 German original},
 PUBLISHER = {Springer-Verlag London, Ltd., London},
      YEAR = {2008},
     PAGES = {xii+616},
      ISBN = {978-1-84800-047-6},
   MRCLASS = {60-01 (28-01 60B10 60F05 60F15 60G42 60J10)},
  MRNUMBER = {2372119},
MRREVIEWER = {Sophie\ Lemaire},
       DOI = {10.1007/978-1-84800-048-3},
       URL = {https://doi.org/10.1007/978-1-84800-048-3},
}

@book {VTC87,
    AUTHOR = {Vakhania, N. N. and Tarieladze, V. I. and Chobanyan, S. A.},
     TITLE = {Probability distributions on {B}anach spaces},
    SERIES = {Mathematics and its Applications (Soviet Series)},
    VOLUME = {14},
      NOTE = {Translated from the Russian and with a preface by Wojbor A.
              Woyczynski},
 PUBLISHER = {D. Reidel Publishing Co., Dordrecht},
      YEAR = {1987},
     PAGES = {xxvi+482},
      ISBN = {90-277-2496-2},
   MRCLASS = {60B11 (28C20 46G12)},
  MRNUMBER = {1435288},
       DOI = {10.1007/978-94-009-3873-1},
       URL = {https://doi.org/10.1007/978-94-009-3873-1},
}

@book {DS88,
    AUTHOR = {Dunford, Nelson and Schwartz, Jacob T.},
     TITLE = {Linear operators. {P}art {I}},
    SERIES = {Wiley Classics Library},
      NOTE = {General theory,
              With the assistance of William G. Bade and Robert G. Bartle,
              Reprint of the 1958 original,
              A Wiley-Interscience Publication},
 PUBLISHER = {John Wiley \& Sons, Inc., New York},
      YEAR = {1988},
     PAGES = {xiv+858},
      ISBN = {0-471-60848-3},
   MRCLASS = {47-01 (46-01)},
  MRNUMBER = {1009162},
}

@book {DU77,
    AUTHOR = {Diestel, J. and Uhl, Jr., J. J.},
     TITLE = {Vector measures},
    SERIES = {Mathematical Surveys},
    VOLUME = {No. 15},
      NOTE = {With a foreword by B. J. Pettis},
 PUBLISHER = {American Mathematical Society, Providence, RI},
      YEAR = {1977},
     PAGES = {xiii+322},
   MRCLASS = {28A45 (46B05 46G10)},
  MRNUMBER = {453964},
MRREVIEWER = {Robert\ E.\ Huff},
}

@article {AR90,
    AUTHOR = {Albeverio, Sergio and R\"ockner, Michael},
     TITLE = {Classical {D}irichlet forms on topological vector
              spaces---closability and a {C}ameron-{M}artin formula},
   JOURNAL = {J. Funct. Anal.},
  FJOURNAL = {Journal of Functional Analysis},
    VOLUME = {88},
      YEAR = {1990},
    NUMBER = {2},
     PAGES = {395--436},
      ISSN = {0022-1236,1096-0783},
   MRCLASS = {47B99 (31C25 46G12)},
  MRNUMBER = {1038449},
MRREVIEWER = {T.\ Hida},
       DOI = {10.1016/0022-1236(90)90113-Y},
       URL = {https://doi.org/10.1016/0022-1236(90)90113-Y},
}

@article {AZ20,
    AUTHOR = {Elad Altman, Henri and Zambotti, Lorenzo},
     TITLE = {Bessel {SPDE}s and renormalised local times},
   JOURNAL = {Probab. Theory Related Fields},
  FJOURNAL = {Probability Theory and Related Fields},
    VOLUME = {176},
      YEAR = {2020},
    NUMBER = {3-4},
     PAGES = {757--807},
      ISSN = {0178-8051,1432-2064},
   MRCLASS = {60H15 (60J46 60J55)},
  MRNUMBER = {4087483},
       DOI = {10.1007/s00440-019-00926-0},
       URL = {https://doi.org/10.1007/s00440-019-00926-0},
}

@book {HKPS93,
    AUTHOR = {Hida, Takeyuki and Kuo, Hui-Hsiung and Potthoff, J\"urgen and
              Streit, Ludwig},
     TITLE = {White noise},
    SERIES = {Mathematics and its Applications},
    VOLUME = {253},
      NOTE = {An infinite-dimensional calculus},
 PUBLISHER = {Kluwer Academic Publishers Group, Dordrecht},
      YEAR = {1993},
     PAGES = {xiv+516},
      ISBN = {0-7923-2233-9},
   MRCLASS = {60G20 (46F25 46Gxx 47N30 60G15 60H05 60H07 81T08)},
  MRNUMBER = {1244577},
MRREVIEWER = {Mylan\ Redfern},
       DOI = {10.1007/978-94-017-3680-0},
       URL = {https://doi.org/10.1007/978-94-017-3680-0},
}

@book {Kal97,
    AUTHOR = {Kallenberg, Olav},
     TITLE = {Foundations of modern probability},
    SERIES = {Probability and its Applications (New York)},
 PUBLISHER = {Springer-Verlag, New York},
      YEAR = {1997},
     PAGES = {xii+523},
      ISBN = {0-387-94957-7},
   MRCLASS = {60-01},
  MRNUMBER = {1464694},
MRREVIEWER = {F.\ B.\ Knight},
}

@book {Kuo96,
    AUTHOR = {Kuo, Hui-Hsiung},
     TITLE = {White noise distribution theory},
    SERIES = {Probability and Stochastics Series},
 PUBLISHER = {CRC Press, Boca Raton, FL},
      YEAR = {1996},
     PAGES = {xii+378},
      ISBN = {0-8493-8077-4},
   MRCLASS = {60G20 (46F25 46G12 46N30 60H05 60H20)},
  MRNUMBER = {1387829},
MRREVIEWER = {Isamu\ D\^oku},
}

@book {Oba94,
    AUTHOR = {Obata, Nobuaki},
     TITLE = {White noise calculus and {F}ock space},
    SERIES = {Lecture Notes in Mathematics},
    VOLUME = {1577},
 PUBLISHER = {Springer-Verlag, Berlin},
      YEAR = {1994},
     PAGES = {x+183},
      ISBN = {3-540-57985-0},
   MRCLASS = {60G15 (46G12 46N30 47N30 60H07 60J65 81S25)},
  MRNUMBER = {1301775},
MRREVIEWER = {Yuh-Jia\ Lee},
       DOI = {10.1007/BFb0073952},
       URL = {https://doi.org/10.1007/BFb0073952},
}

@article {AMMP10,
    AUTHOR = {Ambrosio, Luigi and Miranda, Jr., Michele and Maniglia,
              Stefania and Pallara, Diego},
     TITLE = {B{V} functions in abstract {W}iener spaces},
   JOURNAL = {J. Funct. Anal.},
  FJOURNAL = {Journal of Functional Analysis},
    VOLUME = {258},
      YEAR = {2010},
    NUMBER = {3},
     PAGES = {785--813},
      ISSN = {0022-1236,1096-0783},
   MRCLASS = {28C20 (60B11)},
  MRNUMBER = {2558177},
MRREVIEWER = {O.\ Lipovan},
       DOI = {10.1016/j.jfa.2009.09.008},
       URL = {https://doi.org/10.1016/j.jfa.2009.09.008},
}

@book {MR92,
    AUTHOR = {Ma, Zhi Ming and R\"ockner, Michael},
     TITLE = {Introduction to the theory of (nonsymmetric) {D}irichlet
              forms},
    SERIES = {Universitext},
 PUBLISHER = {Springer-Verlag, Berlin},
      YEAR = {1992},
     PAGES = {vi+209},
      ISBN = {3-540-55848-9},
   MRCLASS = {60J40 (31C25 60J45)},
  MRNUMBER = {1214375},
MRREVIEWER = {Tu\ Sheng\ Zhang},
       DOI = {10.1007/978-3-642-77739-4},
       URL = {https://doi.org/10.1007/978-3-642-77739-4},
}

@book {HNVW16,
    AUTHOR = {Hyt\"onen, Tuomas and van Neerven, Jan and Veraar, Mark and
              Weis, Lutz},
     TITLE = {Analysis in {B}anach spaces. {V}ol. {I}. {M}artingales and
              {L}ittlewood-{P}aley theory},
    SERIES = {Ergebnisse der Mathematik und ihrer Grenzgebiete. 3. Folge. A
              Series of Modern Surveys in Mathematics [Results in
              Mathematics and Related Areas. 3rd Series. A Series of Modern
              Surveys in Mathematics]},
    VOLUME = {63},
 PUBLISHER = {Springer, Cham},
      YEAR = {2016},
     PAGES = {xvi+614},
      ISBN = {978-3-319-48519-5; 978-3-319-48520-1},
   MRCLASS = {46-02 (42B35 46E30)},
  MRNUMBER = {3617205},
MRREVIEWER = {Adam\ Os\polhk ekowski},
}

@book {AF03,
    AUTHOR = {Adams, Robert A. and Fournier, John J. F.},
     TITLE = {Sobolev spaces},
    SERIES = {Pure and Applied Mathematics (Amsterdam)},
    VOLUME = {140},
   EDITION = {Second},
 PUBLISHER = {Elsevier/Academic Press, Amsterdam},
      YEAR = {2003},
     PAGES = {xiv+305},
      ISBN = {0-12-044143-8},
   MRCLASS = {46E35 (46-01 46-02 46B70 46Exx)},
  MRNUMBER = {2424078},
}

@book {FOT11,
    AUTHOR = {Fukushima, Masatoshi and Oshima, Yoichi and Takeda, Masayoshi},
     TITLE = {Dirichlet forms and symmetric {M}arkov processes},
    SERIES = {De Gruyter Studies in Mathematics},
    VOLUME = {19},
   EDITION = {extended},
 PUBLISHER = {Walter de Gruyter \& Co., Berlin},
      YEAR = {2011},
     PAGES = {x+489},
      ISBN = {978-3-11-021808-4},
   MRCLASS = {60J25 (28A12 31C45 60F10 60J40 60J45 60J55)},
  MRNUMBER = {2778606},
}

@article {ASZ09,
    AUTHOR = {Ambrosio, Luigi and Savar\'e, Giuseppe and Zambotti, Lorenzo},
     TITLE = {Existence and stability for {F}okker-{P}lanck equations with
              log-concave reference measure},
   JOURNAL = {Probab. Theory Related Fields},
  FJOURNAL = {Probability Theory and Related Fields},
    VOLUME = {145},
      YEAR = {2009},
    NUMBER = {3-4},
     PAGES = {517--564},
      ISSN = {0178-8051,1432-2064},
   MRCLASS = {60J60 (49Q20 60G07 82C31)},
  MRNUMBER = {2529438},
       DOI = {10.1007/s00440-008-0177-3},
       URL = {https://doi.org/10.1007/s00440-008-0177-3},
}

@article {DPR02,
    AUTHOR = {Da Prato, Giuseppe and R\"ockner, Michael},
     TITLE = {Singular dissipative stochastic equations in {H}ilbert spaces},
   JOURNAL = {Probab. Theory Related Fields},
  FJOURNAL = {Probability Theory and Related Fields},
    VOLUME = {124},
      YEAR = {2002},
    NUMBER = {2},
     PAGES = {261--303},
      ISSN = {0178-8051,1432-2064},
   MRCLASS = {60H15 (47D07 47N30)},
  MRNUMBER = {1936019},
MRREVIEWER = {Sandra\ Cerrai},
       DOI = {10.1007/s004400200214},
       URL = {https://doi.org/10.1007/s004400200214},
}

@article {H14,
	AUTHOR = {Hairer, M.},
	TITLE = {A theory of regularity structures},
	JOURNAL = {Invent. Math.},
	FJOURNAL = {Inventiones Mathematicae},
	VOLUME = {198},
	YEAR = {2014},
	NUMBER = {2},
	PAGES = {269--504},
	ISSN = {0020-9910,1432-1297},
	MRCLASS = {60H15 (35R60 60H40 81S20 82C28)},
	MRNUMBER = {3274562},
	MRREVIEWER = {Dora\ Sele\v si},
	DOI = {10.1007/s00222-014-0505-4},
	URL = {https://doi.org/10.1007/s00222-014-0505-4},
}

@article {GIP15,
	AUTHOR = {Gubinelli, Massimiliano and Imkeller, Peter and Perkowski,
	Nicolas},
	TITLE = {Paracontrolled distributions and singular {PDE}s},
	JOURNAL = {Forum Math. Pi},
	FJOURNAL = {Forum of Mathematics. Pi},
	VOLUME = {3},
	YEAR = {2015},
	PAGES = {e6, 75},
	ISSN = {2050-5086},
	MRCLASS = {60H15 (35S50)},
	MRNUMBER = {3406823},
	DOI = {10.1017/fmp.2015.2},
	URL = {https://doi.org/10.1017/fmp.2015.2},
}

@article {NP92,
	AUTHOR = {Nualart, D. and Pardoux, \'E.},
	TITLE = {White noise driven quasilinear {SPDE}s with reflection},
	JOURNAL = {Probab. Theory Related Fields},
	FJOURNAL = {Probability Theory and Related Fields},
	VOLUME = {93},
	YEAR = {1992},
	NUMBER = {1},
	PAGES = {77--89},
	ISSN = {0178-8051,1432-2064},
	MRCLASS = {60H15 (35R60 49J40)},
	MRNUMBER = {1172940},
	MRREVIEWER = {Ralf\ Manthey},
	DOI = {10.1007/BF01195389},
	URL = {https://doi.org/10.1007/BF01195389},
}

@article {H13,
	AUTHOR = {Hairer, Martin},
	TITLE = {Solving the {KPZ} equation},
	JOURNAL = {Ann. of Math. (2)},
	FJOURNAL = {Annals of Mathematics. Second Series},
	VOLUME = {178},
	YEAR = {2013},
	NUMBER = {2},
	PAGES = {559--664},
	ISSN = {0003-486X,1939-8980},
	MRCLASS = {35K59 (35B10 35B65 35R60 60G22 60H15 60K35)},
	MRNUMBER = {3071506},
	MRREVIEWER = {Alp\ O.\ Eden},
	DOI = {10.4007/annals.2013.178.2.4},
	URL = {https://doi.org/10.4007/annals.2013.178.2.4},
}

@book {FH14,
	AUTHOR = {Friz, Peter K. and Hairer, Martin},
	TITLE = {A course on rough paths},
	SERIES = {Universitext},
	NOTE = {With an introduction to regularity structures},
	PUBLISHER = {Springer, Cham},
	YEAR = {2014},
	PAGES = {xiv+251},
	ISBN = {978-3-319-08331-5; 978-3-319-08332-2},
	MRCLASS = {60-02 (34F05 35R60 60H07 60H10 60H15)},
	MRNUMBER = {3289027},
	MRREVIEWER = {Fabrice\ Baudoin},
	DOI = {10.1007/978-3-319-08332-2},
	URL = {https://doi.org/10.1007/978-3-319-08332-2},
}

@article {Zam05,
	AUTHOR = {Zambotti, Lorenzo},
	TITLE = {Integration by parts on the law of the reflecting {B}rownian
	motion},
	JOURNAL = {J. Funct. Anal.},
	FJOURNAL = {Journal of Functional Analysis},
	VOLUME = {223},
	YEAR = {2005},
	NUMBER = {1},
	PAGES = {147--178},
	ISSN = {0022-1236,1096-0783},
	MRCLASS = {60H07 (60J65)},
	MRNUMBER = {2139884},
	MRREVIEWER = {Hac\`ene\ Boutabia},
	DOI = {10.1016/j.jfa.2004.08.001},
	URL = {https://doi.org/10.1016/j.jfa.2004.08.001},
}

@article {Zam02,
	AUTHOR = {Zambotti, Lorenzo},
	TITLE = {Integration by parts formulae on convex sets of paths and
	applications to {SPDE}s with reflection},
	JOURNAL = {Probab. Theory Related Fields},
	FJOURNAL = {Probability Theory and Related Fields},
	VOLUME = {123},
	YEAR = {2002},
	NUMBER = {4},
	PAGES = {579--600},
	ISSN = {0178-8051,1432-2064},
	MRCLASS = {60H07 (60H15 60J55)},
	MRNUMBER = {1921014},
	MRREVIEWER = {Catherine\ Donati-Martin},
	DOI = {10.1007/s004400200203},
	URL = {https://doi.org/10.1007/s004400200203},
}

@article {Zam03,
	AUTHOR = {Zambotti, Lorenzo},
	TITLE = {Integration by parts on {$\delta$}-{B}essel bridges,
	{$\delta>3$} and related {SPDE}s},
	JOURNAL = {Ann. Probab.},
	FJOURNAL = {The Annals of Probability},
	VOLUME = {31},
	YEAR = {2003},
	NUMBER = {1},
	PAGES = {323--348},
	ISSN = {0091-1798,2168-894X},
	MRCLASS = {60H15 (31C25 35R60 37L40 60H07)},
	MRNUMBER = {1959795},
	MRREVIEWER = {Jan\ I.\ Seidler},
	DOI = {10.1214/aop/1046294313},
	URL = {https://doi.org/10.1214/aop/1046294313},
}

\end{document}